\newtheorem{theorem}{Theorem}[section]
\newtheorem{proposition}[theorem]{Proposition}
\newtheorem{lemma}[theorem]{Lemma}
\theoremstyle{definition}
\newtheorem{definition}[theorem]{Definition}
\newtheorem{remark}[theorem]{Remark}
\newcommand{\N}     {\mathbb{N}}
\newcommand{\E}     {\mathbb{E}} 
\newcommand{\T}     {\mathbb{T}}
\newcommand{\Acal}   {{\mathcal A }}
\newcommand{\Ecal}   {{\mathcal E }} 
\newcommand{\Gcal}   {{\mathcal G }}
\newcommand{\Wcal}   {{\mathcal W }}
\numberwithin{equation}{section}
\begin{document}
\title{Uniformly positive correlations in the dimer model and \\   phase transition in lattice permutations \\ in $\mathbb{Z}^d$, $d > 2$,  via reflection positivity}
\author{Lorenzo Taggi\thanks{Weierstrass Institute for Applied Analysis and Stochastics, Berlin, DE; taggi@wias-berlin.de}}
\date{\today}
\maketitle

\begin{abstract}
Our first main result is that correlations between monomers in the dimer model in $\mathbb{Z}^d$ do not decay to zero when $d > 2$. This is the first rigorous result about correlations in the dimer model in dimensions greater than two and shows that the  model behaves drastically differently than in two dimensions, in which case it is  integrable and correlations are known to decay to zero polynomially.
Such  a result is implied by our more general, second main result, which states the occurrence of a phase transition in  the model of lattice permutations, 
which is related to the quantum Bose gas.
More precisely, we consider a self-avoiding walk interacting with lattice permutations and we prove that, in the regime of fully-packed loops,
such a walk is `long' and the distance between its end-points grows linearly with the diameter of the box. 
These results follow from the derivation of a version of the infrared bound from a new general probabilistic settings, with coloured loops and walks interacting at sites and walks entering into the system from some `virtual' vertices. 
    \newline
    \newline
    \emph{Keywords and phrases.} 
    Dimer model, Spatial permutations, quantum Bose gas,  
   self-avoiding walks,
    loop O(N) model, reflection positivity, infrared bound, phase transitions.
\end{abstract}



\begin{figure}
  \centering
     \includegraphics[width=0.50\textwidth]{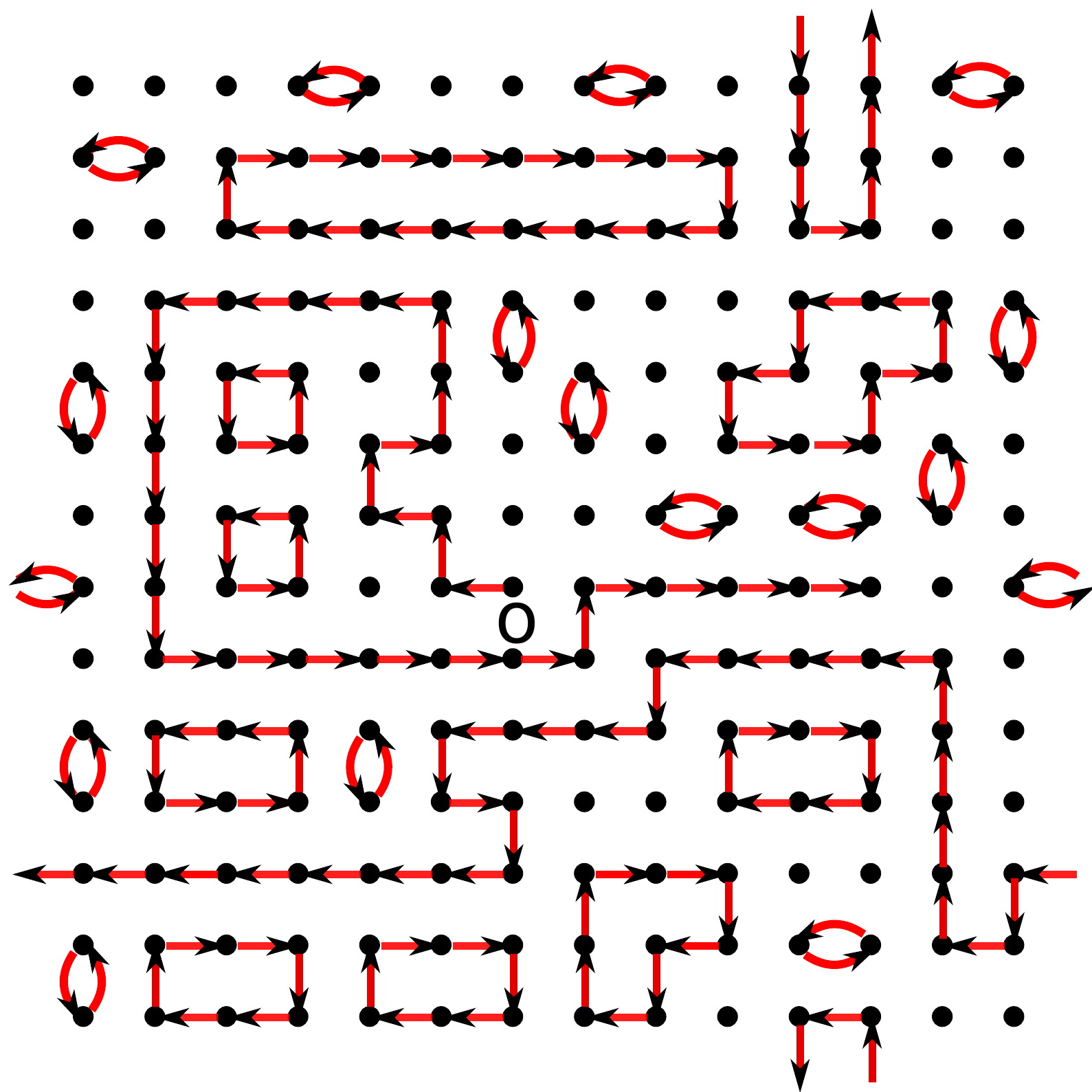}
      \caption{A realisation $\pi \in \Omega$ on the torus.}\label{Fig:examplepi}
\end{figure}

\section{Introduction}
\label{sect:Introduction}
This paper considers two models related to each other, the dimer model and lattice permutations.

The dimer model is a classical statistical mechanics model whose configurations are perfect matchings of a graph, namely subsets of edges which cover every vertex precisely once.
The model attracts interest from a wide range of perspectives, which include combinatorics, statistical mechanics, and algorithm complexity studies.
Its rigorous mathematical study  achieved a breakthrough with  the  works of Kasteleyn, Temperley and Fisher,  \cite{Fisher, Kasteleyn, TemperleyFisher} in 1961, who showed that on planar graphs the dimer problem is \textit{exactly solvable}.
By then, various aspects  of the dimer model have been explored:
For example its close relation to the critical Ising model  \cite{AuYang, Kasteleyn},
a characterisation of  the model's correlations \cite{FisherStephenson},
the arctic circle phenomenon \cite{Cohn2},
their continuous limits and the emergence of conformal symmetry
\cite{Giuliani, Kenyon2, Kenyon3}.

Despite so much progress on planar graphs,  the rigorous mathematical understanding of the dimer model in higher dimensional graphs is  still very poor.  Indeed, as it was formalised by Hammersley \textit{et al.} \cite{Hammersley1}, the method of  Kasteleyn, Temperley and Fisher, which consists of reducing the problem of enumerating the number of dimer covers to the  problem of computing the Pfaffian of the so-called Kasteleyn matrix, cannot be naturally extended to  $\mathbb{Z}^d$, $d > 2$, in which case  it was shown  \cite{Jerrum} that the dimer model is  computationally intractable.

This paper presents the first result about correlations in the dimer model in $\mathbb{Z}^d$, when $d > 2$. 
More precisely, we consider the monomer-monomer correlation, i.e, the ratio between  the number of dimer covers with two monomers and 
the number of dimer covers with no monomers,  which is a central quantity in the study of this model. In dimensions $d = 2$, it was shown that it decays to zero \textit{polynomially} with the distance between the two monomers   \cite{Dubedat, FisherStephenson}. 
Our first main result, Theorem \ref{theo:theo1} below, states that such a function \textit{does not decay to zero} with the distance when $d > 2$. 
This is in agreement with physicists predictions  \cite{Huse} based on heuristic arguments.  
As a by-product of our technique we also deduce that, in the infinite volume limit, the correlation between monomers along the cartesian axis equals 
$\frac{1}{2d}$ up to non-positive corrections term of order $O(\frac{1}{d^2})$,
which are uniform with respect to the distance between such monomers.


Our first main result is implied by our more general main result about the model of \textit{lattice permutations},  which,
in the form as we define it, can be viewed as a generalisation of the 
\textit{double dimer model}  \cite{Dubedat2, Kenyon5}.
The configuration space of the model can be viewed as the set of directed multi-graphs whose vertex set are the vertices of a box in $\mathbb{Z}^d$ and such that any connected component is either a `monomer' (a single vertex with no edges which are incident to it), a `double edge' (a connected component consisting of two vertices and  two parallel edges pointing opposite directions), or a directed self-avoiding loop. A measure which assigns to each such  graph a weight which depends on two parameters,  $\rho \in [0, \infty)$, the \textit{monomer activity}, and  $N \in [0, \infty)$, the \textit{number of colours}, is introduced. The parameter $\rho$ rewards  the number of monomers, while the parameter $N$ rewards the number of loops and double edges.

The study of lattice permutations has been proposed in \cite{Biskup2, Betz3, Gandolfo, Grosskinsky} in view of their  connections to  \textit{Bose-Einstein condensation}
\cite{Feynman}, which is an important unsolved statistical mechanics problem.
Contrary to  these papers, where jumps of  arbitrary length are allowed and penalised according to a Gaussian weight (and no multiplicity factor for the number of loops and double edges is considered),  here  we  only allow jumps of length one or zero; this feature gives the model a combinatorial flavour and allows the connection with the dimer model.
The relevance of lattice permutations for the study of Bose-Einstein condensation is that, contrary to other spatial random permutation models which were studied before (for example \cite{BU1, Betz0, BU3, BU4, Bogachev, ElboimPeled, Ferrari}) and  similarly to the  \textit{interacting} quantum Bose gas,
a spatial interaction  which depends on the mutual distance of the loops takes place (loops interact by mutual exclusion).
This feature makes the techniques which have been employed in such previous works ineffective for the rigorous analysis of lattice permutations and the model interesting and challenging.
The central question for the quantum Bose gas is whether Bose-Einstein condensation takes place. In  \cite{UeltschiRelation} it is shown that, in a random loop model which is related to  lattice permutations, the two-point function, namely the ratio of the partition functions of a system with a forced `open' cycle and one without, can be used to detect Bose-Einstein condensation: If this ratio stays positive uniformly in the volume and in the spatial separation of the two endpoints of the forced cycle, this is equivalent to the presence of off-diagonal long range order \cite{Penrose}, which itself is equivalent to Bose-Einstein condensation.
This paper (our Theorem \ref{theo:theo3} below) provides a rigorous proof of this fact in  the model of lattice permutations.

The relevance of lattice permutations goes even beyond  their connection to the dimer model and Bose-Einstein condensation, which holds  when $N = 2$.
Indeed, they are an intriguing mathematical object for any value of $N \in [0, \infty)$ and can be viewed as a version of the \textit{loop O(N) model},
which is in turn related to spin systems with continuous symmetry for integer values of $N$  (see  \cite{PeledSpinka} for an overview). The  difference  between our setting and  the model considered in \cite{PeledSpinka} is that we also allow double edges and that the loop containing the origin is `open', namely it is a self-avoiding walk which starts from the origin and ends at an arbitrary vertex of the box. One of the most important questions for this class of models is the  identification of regions of the phase diagram where the loop length \textit{does not} admit exponential decay. This was recently accomplished  for the loop O(N) model on the hexagonal lattice using various techniques, for example parafermionic observables, planar spin representations, and Russo-Welsh estimates \cite{DuminilCopinParafermionic, Glazman}, see also further references in \cite{PeledSpinka}. Although very powerful, these techniques are specific for planar graphs and cannot be naturally extended to  $\mathbb{Z}^d$, $d > 2$, in which case only results stating exponential decay have been derived \cite{Chayes, Taggi} and techniques are missing. Our Theorem \ref{theo:theo2} below states that,  in any dimension $d > 2$, in the regime of fully-packed loops, the length of the  self-avoiding walk in lattice permutations grows unboundedly with the size of the box and the  distance between its two end-points is of the same order of magnitude as the diameter of the box. Hence, not only we rule out exponential decay in any dimension $d > 2$,  but we also identify the correct scaling of the distance between the end-points of the self-avoiding walk.


Our proof method is of independent interest and can be viewed as reformulation of the famous approach of   Fr\"ohlich, Simon and Spencer  in \cite{Frohlich} in the space of paths.
In  \cite{Frohlich},  the property of \textit{reflection positivity} of a system of spins with continuous symmetry, known as spin O(N) model, was employed for the derivation of the so-called \textit{infrared bound}, which implies that correlations do not decay in such a spin system.
Such an approach was further developed in \cite{FrohlichI, FrohlichII} and implemented in  several other research works in the framework of  quantum and classical spin systems.
Here we implement such an approach in  a completely different  setting which does not involve spins, but a general probabilistic model of  interacting coloured loops and walks.
Our framework includes the (loop representation of) the spin O(N) model as a special case, and other random loop models for which no spin representation exists  or is  easy to derive, for example lattice permutations (see also Remark \ref{rem:extension} below).
Hence, our method can  be viewed as an extension of \cite{Frohlich, FrohlichI, FrohlichII}.

\section{Definitions and main results}
\label{sect:rigorousresults}
We now provide a precise definition of the dimer model and of lattice permutations and we state our main results formally.
This section is divided into three paragraphs with each paragraph stating a main theorem. 
Our third theorem, Theorem \ref{theo:theo3} below, involves lattice permutations and it can be viewed as a reformulation of our Theorem \ref{theo:theo2} and as a generalisation of Theorem \ref{theo:theo1}, which involves the dimer model.

\paragraph{The Dimer model.}
\begin{figure}
  \centering
    \includegraphics[width=0.30\textwidth]{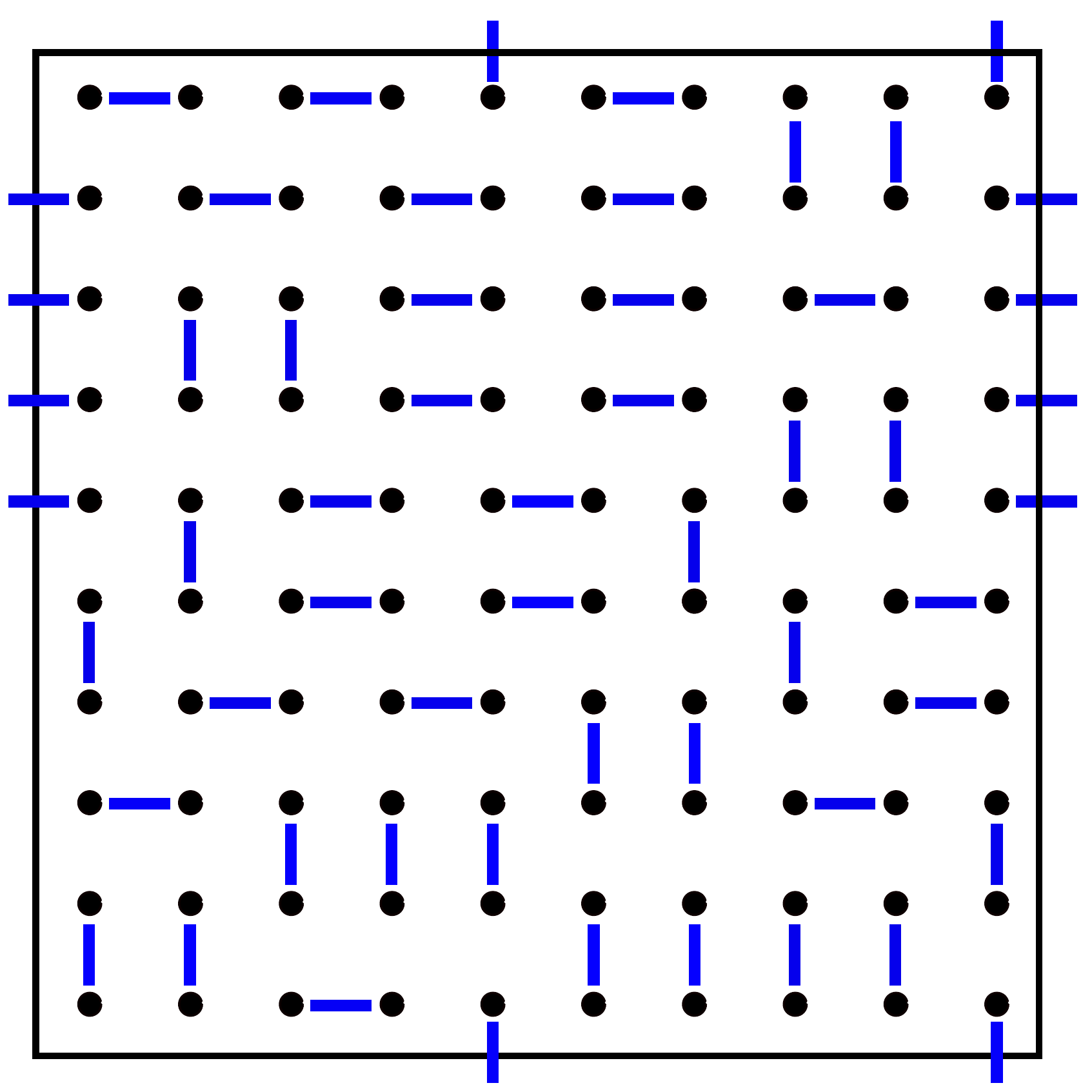}
        \includegraphics[width=0.30\textwidth]{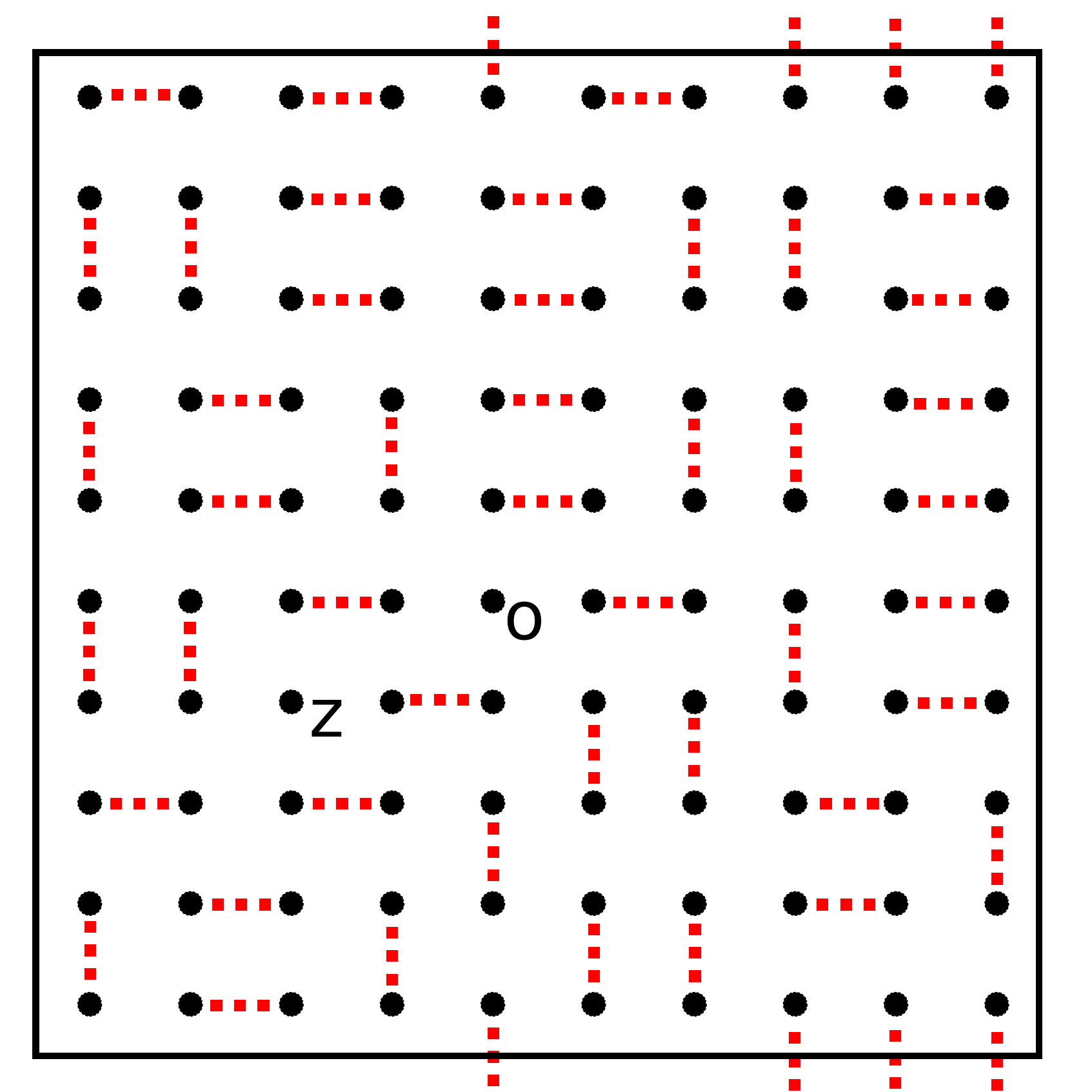}
            \includegraphics[width=0.30\textwidth]{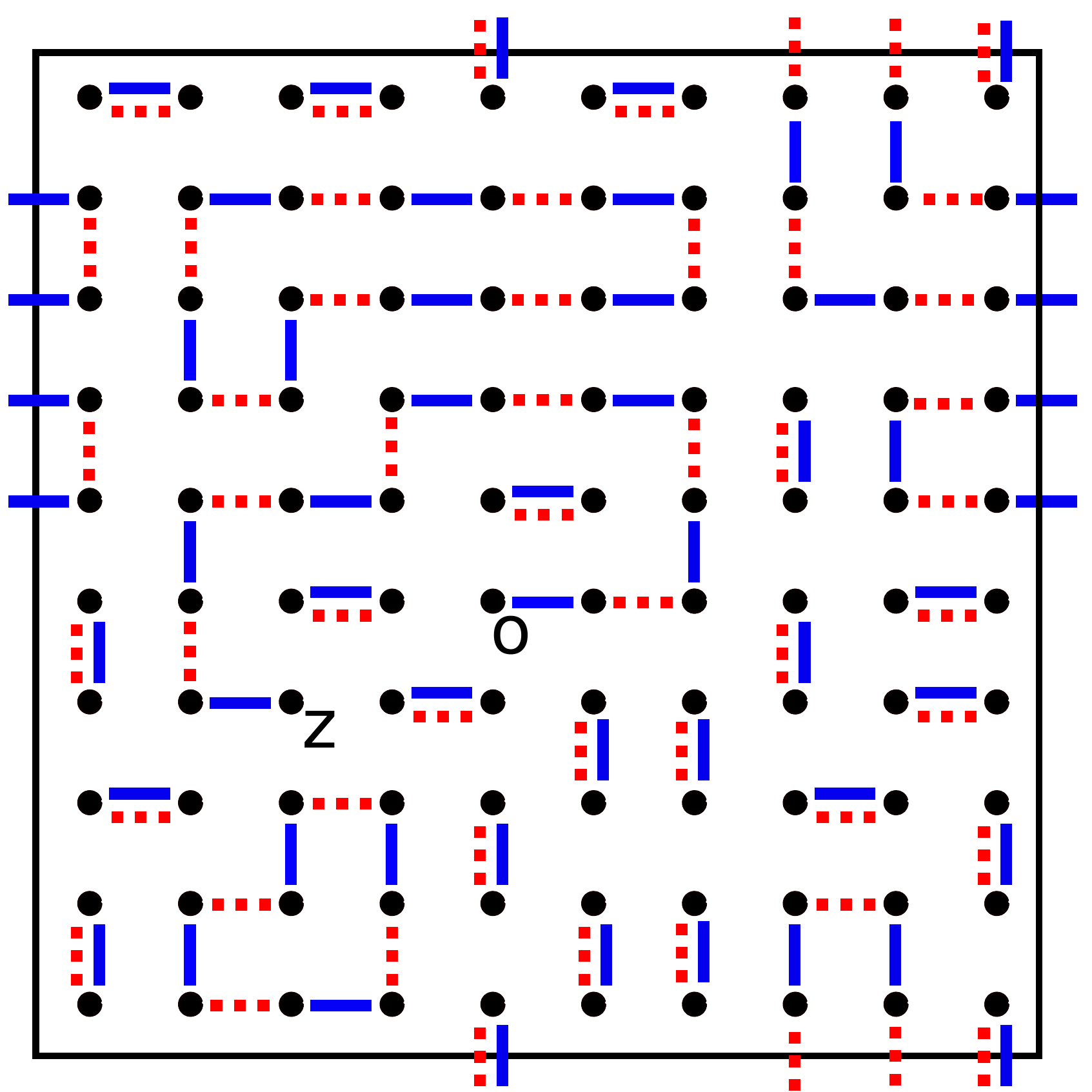}
      \caption{Left: A dimer cover in $\mathcal{D}(\emptyset)$. Centre: A dimer cover in  $\mathcal{D}(\{o,z\})$. Right: superposition of the dimer cover on the left and the dimer cover in the centre. }\label{Fig:dimerssuperposition}
\end{figure}
A \textit{dimer cover} of the graph $\mathcal{G} = (\mathcal{V}, \mathcal{E})$ is a spanning sub-graph\footnote{A spanning sub-graph of a graph $\mathcal{G}= (\mathcal{V},\mathcal{E})$ is a sub-graph of $\mathcal{G}$ whose vertex set is $\mathcal{V}$.} of $\mathcal{G}$ such that every vertex has degree one. 
Let $(\T_L, \E_L)$ be a graph  with vertex set 
$
\T_L$ $ : = \big \{ (x_1, \ldots, x_d) \in \mathbb{Z}^d $ $ : \, \, 
x_i \in ( - \frac{L}{2}, \frac{L}{2}  ] \big \}
$  and edges connecting nearest-neighbour vertices and boundary vertices so that  $(\T_L, \E_L)$ can be identified with the torus 
$\mathbb{Z}^d / L \mathbb{Z}^d$, where $L \in \mathbb{N}_{>0}$.
For any set of sites $M \subset \T_L$, 
let   $\mathcal{D}(M)$ be the (possibly empty) set of dimer covers of 
the graph which is obtained from  $(\T_L, \E_L)$ by removing all the sites which 
are in $M$ and from $\E_L$ all the edges which are incident to at least one vertex in $M$.
The \textit{monomer-monomer correlation} is a fundamental quantity for the analysis of the dimer model and it corresponds to the ratio between the number of dimer covers with two monomers and the number of dimer comers with no monomer,
\begin{equation}
\forall x \in \T_L \quad  \Xi_L(x) : = \frac{ | \mathcal{D}(\{o,x\}) |}{  |  \mathcal{D}( \emptyset) |},
\end{equation}
where $o$ is used to denote the origin, $o = (0,  \ldots, 0)\in \T_L$.
See also Figure \ref{Fig:dimerssuperposition}.
This function equals zero if $L \in 2\mathbb{N}$ and $x$ belongs to the even sub-lattice of $\T_L^{e} \subset \T_L$, which is now defined together with the odd sub-lattice,
\begin{equation}\label{eq:oddevensublattice}
\T_L^{e} : = \{  x \in \T_L \, \, : \, \,  d(o,x) \in 2 \mathbb{N}   \},
\quad \quad \quad 
\T_L^{o} : = \{  x \in \T_L \, \, : \, \,  d(o,x) \in 2 \mathbb{N}+1   \},
\end{equation}
where $d(o,x)$ is the graph distance in $(\T_L, \E_L)$.
Let  $N_+ = \sum_{n>0} \mathbbm{1}\{ S_n = o  \}$ be the number of returns to the origin of a simple random walk, $S_n$, in $\mathbb{Z}^d$, whose probability measure and expectation are denoted  by $P^d$ and $E^d$ respectively, define $r_d : = E^d( N_+)$, the expected number of returns to the origin. We use  $\boldsymbol{e}_i \in \mathbb{R}^d$ to denote the  cartesian vectors, where $i \in \{1, \ldots, d\}$.
 \begin{theorem}\label{theo:theo1}
Suppose that $d >2$.
Then,
\begin{equation}\label{eq:dimerpositivityaverage}
 \liminf\limits_{ \substack{L \rightarrow \infty \\ \mbox{ \footnotesize $L$ even } } } \frac{1}{| \T^o_L |}  \sum_{x \in \T^o_L}  \Xi_{L}(x)  \, 
 \geq \, \frac{1}{2d} (  1 - \frac{r_d}{2}   ).
\end{equation}
Moreover,  for any $\varphi \in \big ( \, 0, \frac{1}{2d} (  1 - \frac{r_d}{2}   ) \, \big ),$  there exists an (explicit) constant  $c_1 = c_1(\varphi, d) \in (0, \frac{1}{2})$ such that  for any   large enough   $L \in 2 \mathbb{N}$ and any odd integer $n \in (0, \, c_1 \,  L)$, 
\begin{equation}\label{eq:dimerpointwise}
\Xi_{L}( n \, \boldsymbol{e}_1 ) \geq \varphi.
\end{equation}
\end{theorem}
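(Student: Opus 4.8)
The plan is to reduce both displays to a Fr\"ohlich--Simon--Spencer (infrared bound) analysis of a reflection-positive random loop model obtained from the dimer model by superposition, and then upgrade the resulting bound on the spatial average to a pointwise bound using Messager--Miracle-Sol\'e monotonicity.

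\emph{Step 1: from monomers to a loop model.} First I would superpose a dimer cover of $\T_L\setminus\{o,x\}$ with a dimer cover of $\T_L$: the union is a collection of loops and double edges covering $\T_L$ together with one self-avoiding path joining $o$ and $x$ (as in Figure~\ref{Fig:dimerssuperposition}). Counting how many pairs of covers yield a given loop-path configuration (each loop of length $\ge 4$ arises from two pairs), this rewrites
\[
\Xi_L(x)=\frac{|\mathcal{D}(\{o,x\})|\,|\mathcal{D}(\emptyset)|}{|\mathcal{D}(\emptyset)|^{2}}
\]
as a ratio $Z_L^{\,o\to x}/Z_L$, where $Z_L$ is the partition function of the fully-packed ($\rho=0$) loop model at $N=2$ (loops and double edges weighted by $2$) and $Z_L^{\,o\to x}$ the same with a forced open self-avoiding path from $o$ to $x$. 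This is a two-point function of (a variant of) lattice permutations, so the general probabilistic framework of the paper --- coloured loops and walks interacting at sites, the walk entering from a virtual vertex --- applies. Note $\Xi_L$ is translation invariant on the torus and supported on $\T_L^{o}$.

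\emph{Step 2: infrared bound and the average estimate.} Since $L$ is even, the loop model of Step~1 is reflection positive with respect to reflections through hyperplanes, and the machinery of the paper yields the infrared bound, which controls the appropriate positive-definite observable built from $\Xi_L$ (the sublattice support of $\Xi_L$ forces one to pass through the virtual-vertex construction, or an equivalent staggering, to phrase this correctly) in terms of $1/\mathcal{E}(k)$, $\mathcal{E}(k)=2\sum_{i=1}^{d}(1-\cos k_i)$, for every non-zero dual mode $k$. I would then combine it with the exact sum rule
\[
\sum_{y\sim o}\Xi_L(y)=1 ,
\]
valid because in every dimer cover $o$ is matched to exactly one of its $2d$ neighbours, so $|\mathcal{D}(\emptyset)|=\sum_{y\sim o}|\mathcal{D}(\{o,y\})|$; by the symmetry of the torus this moreover gives $\Xi_L(\boldsymbol{e}_1)=\tfrac1{2d}$ exactly. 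In Fourier variables the sum rule fixes the diagonal value of the positive observable, and the standard FSS estimate (isolate the $k=0$ mode, bound the other modes by the infrared bound, and send $L\to\infty$ so the Riemann sum becomes $\frac1{(2\pi)^d}\int_{[-\pi,\pi]^d}(\cdot)\,dk$) then produces a lower bound for $\liminf_L \frac1{|\T_L^{o}|}\sum_{x\in\T_L^{o}}\Xi_L(x)$. Writing $\hat{p}(k)=\frac1d\sum_i\cos k_i$ and using the simple-random-walk Green-function identity $1+r_d=\frac1{(2\pi)^d}\int(1-\hat{p}(k))^{-1}\,dk$, the correction integral evaluates to $\frac1{2d}\cdot\frac{r_d}{2}$, which is exactly \eqref{eq:dimerpositivityaverage}.

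\emph{Step 3: from the average to the pointwise bound.} Reflection positivity also yields Messager--Miracle-Sol\'e-type monotonicity for $\Xi_L$: for $\|x\|_\infty<L/2$ the value $\Xi_L(x)$ is dominated by the value of $\Xi_L$ on a coordinate axis at the corresponding distance, and $n\mapsto\Xi_L(n\boldsymbol{e}_1)$ is non-increasing on $(0,L/2]$. In particular $\Xi_L(x)\le\Xi_L(n\boldsymbol{e}_1)$ whenever $x\in\T_L^{o}$ and $\|x\|_\infty\ge n$ with $n<L/2$, while $\Xi_L(x)\le\Xi_L(\boldsymbol{e}_1)=\tfrac1{2d}$ for every $x$. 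I would then split $\sum_{x\in\T_L^{o}}\Xi_L(x)$ according to whether $\|x\|_\infty<n$ or $\ge n$: the first part has at most $(2n)^d$ terms, each at most $\tfrac1{2d}$, and the second is at most $|\T_L^{o}|\,\Xi_L(n\boldsymbol{e}_1)$. Dividing by $|\T_L^{o}|=|\T_L|/2$ and inserting \eqref{eq:dimerpositivityaverage}, for $L$ large and $n<c_1L$ with $c_1<\tfrac12$ one gets $\Xi_L(n\boldsymbol{e}_1)\ge\frac1{2d}\big(1-\tfrac{r_d}{2}\big)-\tfrac{(2c_1)^{d}}{d}-o_L(1)$; choosing $c_1=c_1(\varphi,d)$ small and then $L$ large forces $\Xi_L(n\boldsymbol{e}_1)\ge\varphi$ for all odd $n\in(0,c_1L)$, which is \eqref{eq:dimerpointwise}.

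\emph{Main obstacle.} The real difficulty is Step~2: proving reflection positivity of the coloured loop/walk model with virtual vertices, extracting Gaussian domination in this non-spin setting, and --- because $\Xi_L$ lives on a single sublattice --- isolating the correct positive-definite combination so that the sum rule fixes the right normalisation and the constant comes out as $\frac1{2d}(1-\tfrac{r_d}{2})$ rather than just positive. Steps~1 and~3 are comparatively routine once that framework and the Messager--Miracle-Sol\'e inequalities are in hand.
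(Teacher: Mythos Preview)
Your proposal is correct and follows essentially the same route as the paper: the double-dimer superposition bijection (Step~1 is the paper's Section~3.3), reflection positivity of the random path model on the extended torus with virtual vertices leading to a Fourier bound (Step~2 is Sections~4--5), and site-monotonicity from \cite{Lees} for the pointwise upgrade (Step~3 is Section~6). The one point you flag but do not spell out --- handling the sublattice support --- is resolved in the paper by isolating \emph{both} the $k=0$ and $k=(\pi,\dots,\pi)$ modes (the ``infrared--ultraviolet bound'', Theorem~\ref{theo:Infrared}), using that $\hat{\mathbb{G}}^o(k+\pi\boldsymbol{u})=-\hat{\mathbb{G}}^o(k)$ to fold the sum onto the half-torus $\mathbb{H}$ where $\cos(k\cdot\boldsymbol{e}_1)\ge 0$, and then the correction integral is computed via $E[N_{\boldsymbol{e}_1}]=r_d$ rather than the full Green's function identity you quote.
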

An exact   computation made by Watson \cite{Watson} shows that  $ 0.51 < r_d < 0.52$ when $d = 3$  and from the Rayleigh monotonicity principle  \cite{Peres} we deduce that $r_d$ is non-increasing with $d$. Thus, the Ces\'aro sum in  (\ref{eq:dimerpositivityaverage}) is bounded  away from zero uniformly for large  $L$ for any $d  > 2$. Contrary to this, when $d  = 2$ such a sum converges to zero with the system size $L$ \cite{Dubedat}.
From the general site-monotonicity properties which were derived in  
\cite[Remark 2.5]{Lees}  we deduce that,
\begin{equation}\label{eq:pointwiseupper}
\forall L \in 2 \mathbb{N}, \quad \forall x \in \T_L, \quad \Xi_L(x) \leq \frac{1}{2d}.
\end{equation}
Since $r_d = O(\frac{1}{d})$, 
our lower bound in (\ref{eq:dimerpositivityaverage}) gets closer to the  point-wise upper bound (\ref{eq:pointwiseupper}) as the dimension increases. Hence, the  larger is the dimension, the more uniform is the correlation between  monomers  across the odd sites of the torus.
For $x \in \mathbb{Z}^d$, define now $\Xi( x ) : =  \liminf_{L \rightarrow \infty } \, \, \Xi_{2L}  ( x )$.
Our bound (\ref{eq:dimerpointwise}) and the point-wise upper bound
(\ref{eq:pointwiseupper})  
imply that, when $d >2$, for any  integer $n \in 2 \mathbb{Z}+1$,
\begin{equation}\label{eq:liminf}
\frac{1}{2d}  (  1 -  \frac{r_d}{2}   ) \leq 
\Xi( n \, \boldsymbol{e}_i ) \leq \frac{1}{2d},
\end{equation}
where $\boldsymbol{e}_i$ is any  cartesian vector.
Contrary to  (\ref{eq:liminf}), $\Xi(n \boldsymbol{e}_i)$  was shown by Fisher and Stephenson  \cite{FisherStephenson} to decay like $n^{ - \frac{1}{2}}$
when $d = 2$.
From (\ref{eq:liminf}) we deduce the asymptotic behaviour of the monomer-monomer correlation in the limit of large dimension,  i.e, for any odd integer  $n$,
$$
\Xi( n \, \boldsymbol{e}_i )  = \frac{1}{2d} + O(\frac{1}{d^2}),
$$
where the error  term in the right-hand side  is uniform in $n$.

\paragraph{Lattice permutations.}
We  now introduce the model of lattice permutations.
Recall that $(\T_L, \E_L)$ denotes the torus, with edges connecting nearest neighbour vertices.
To begin, for any pair of sites $x, y \in \T_L$ such that $x \neq y$, let 
$\Omega_{x,y}$ be the set of directed multi-graphs 
$\pi = (\T_L, E_\pi)$  such that:
\textbf{(i)} the edges of $E_\pi$ connect nearest-neighbour vertices in the torus,
\textbf{(ii)} the in-  and the out-degree of every vertex in $\T_L \setminus \{x,y\}$ are equal and their value is either zero or to one,
\textbf{(iii)}  the out-degree of $x$ is one and  its
in-degree is zero, the out-degree of $y$ is zero and  its
in-degree is one.
This implies that the connected component of the graph $(\T_L, E_\pi)$
which contains $x$ is a walk which starts at $x$ and ends at $y$ and that any other connected component is either a monomer, a double edge or a loop, which we now define:  
a \textit{walk}  is a sub-graph which is isomorphic to a simple open curve in $\mathbb{R}^d$  and it is directed (and self-avoiding),
a \textit{monomer} is a connected component consisting of a single vertex with no edges  incident to it,
a \textit{double edge} is a connected component corresponding to a pair of nearest neighbour vertices, $z, w \in \T_L$, with an edge  directed from $z$ to $w$ and an edge directed from $w$ to $z$; 
a  \textit{loop} is a sub-graph which is isomorphic to a simple closed curve  in $\mathbb{R}^d$ and it is directed (and self-avoiding).
See also  Figure \ref{Fig:examplepi} for an example.
When $x  = y$, we define $\Omega_{x,y}$ as the set of 
directed multi-graphs 
$\pi = (\T_L, E_\pi)$ such that:
\textbf{(i)} the edges of $E_\pi$ connect nearest-neighbour vertices in the torus,
\textbf{(ii)} the in- and the out-degree of every vertex in $\T_L \setminus \{x\}$ are equal and their value is  either zero or one,
\textbf{(iii)} the vertex $x = y$  is a monomer (i.e, the walk is `degenerate', namely it consists of just one vertex and no edges).
We define the \textit{configuration space} $\Omega : = \cup_{x \in \T_L} \Omega_{o,x}$. Each such $\pi \in \Omega$ can be viewed as a system of monomers, loops and double edges with a  walk which starts from the origin and ends at one unspecified vertex of the torus
and all these objects are mutually disjoint.
For any $\pi \in \Omega$, let  $\mathcal{M}(\pi)$ be the number of monomers of  $\pi$.
Furthermore, for any $\pi \in \Omega$, let $\mathcal{L}(\pi)$ be the \textit{number of loops and double edges} in $\pi$.
We introduce the probability measure  $\mathbb{P}_{L,N, \rho}$ in $\Omega$,
which depends on two   parameters $\rho \in [0, \infty)$, the \textit{monomer activity,} and $N \in [0, \infty)$, the \textit{number of colours,} as follows:
\begin{equation}\label{eq:definitionmeasure}
\forall \pi \in \Omega \quad \quad  \mathbb{P}_{L,N, \rho} \big (   \pi   \big ) : = \frac{\rho^{\mathcal{M}(\pi) } \, \, (\frac{N}{2})^{ \mathcal{L}(\pi)  } }{Z_{L,  N, \rho}},
\end{equation}
where $Z_{L, N, \rho}$ is a normalisation constant.
Let $X : \Omega \rightarrow \T_L$ be the end-point of the walk,
which we call \textit{target point.} More precisely,
for any $\pi \in \Omega$, we define $X(\pi) \in \T_L$
as the unique vertex such that  $\pi \in \Omega_{o,X(\pi)}$.
It is known that, if the monomer activity is is large enough, the length of the walk admits uniformly bounded exponential moments \cite{Betz2, Taggi}. 
This implies that the distance between the target point and the origin does not grow unboundedly with the size of the system. Our Theorem \ref{theo:theo2} below states that, 
contrary to the case of high  monomer activity,
when the monomer activity is zero, the 
distance between the target point and the origin grows with the size of the system and scales linearly with  the diameter of the box.
In other words, a phase transition takes place at a finite, possibly zero value of the monomer activity.
Recall that $r_d$ is the expected number of returns of a simple random walk in $\mathbb{Z}^d$ and recall also the  properties of $r_d$ which were stated above.
\begin{theorem}\label{theo:theo2}
Suppose that $d > 2$ and that  $N$ is an integer in $(0, \frac{4}{r_d} )$.
There exists an (explicit) constant  $c_2 = c_2(N, d) \in (0, \infty)$ such that for any large enough
$L \in 2 \mathbb{N}$,
\begin{equation}\label{eq:mainestimate}
 \forall  A \subset \T_L, \quad 
\mathbb{P}_{L,N, 0} \big ( \, X \in A  \, \big )  \leq  \,c_2 \, \frac{|A|}{L^d}.
\end{equation}
\end{theorem}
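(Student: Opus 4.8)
The plan is to deduce (\ref{eq:mainestimate}) from the infrared bound, carrying out in the space of paths the strategy of \cite{Frohlich}. Since $\mathbb{P}_{L,N,0}(X\in A)=\sum_{x\in A}\mathbb{P}_{L,N,0}(X=x)$, it is enough to establish the uniform pointwise bound $\mathbb{P}_{L,N,0}(X=x)\le c_2/L^d$ for all $x\in\T_L$ and all large $L\in 2\mathbb{N}$. Denote by $\mathcal{T}_L\colon\T_L\to[0,\infty)$ the two-point function of the model — for $N=2$, $\rho=0$ it is the monomer--monomer correlation $\Xi_L$ up to normalisation, and for general $N$ it is the quantity appearing in Theorem~\ref{theo:theo3}. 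Cancelling the reference partition function common to numerator and denominator gives
\begin{equation}\label{eq:reduction-prop}
\mathbb{P}_{L,N,0}(X=x)\ =\ \bigl(1+o(1)\bigr)\,\frac{\mathcal{T}_L(x)}{\widehat{\mathcal{T}}_L(0)}\qquad\text{uniformly in }x\in\T_L,
\end{equation}
where $\widehat{g}(p):=\sum_{y\in\T_L}g(y)\,e^{-ip\cdot y}$ for $p$ in the dual torus $\T_L^{*}$ and the $o(1)$, which absorbs the diagonal contribution, tends to $0$ with $L$. Thus it suffices to prove that $\mathcal{T}_L(x)\le (c/L^d)\,\widehat{\mathcal{T}}_L(0)$ uniformly in $x$ and large even $L$, for some $c=c(N,d)<\infty$.

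Two facts about $\mathcal{T}_L$ will be used, both consequences of reflection positivity, which is the structural input of the paper. First, $\mathcal{T}_L$ is a function of positive type, so $\widehat{\mathcal{T}}_L(p)\ge0$ for all $p\in\T_L^{*}$ and the diagonal term $\mathcal{T}_L(o)$ is strictly positive. Second — and this is the heart of the matter — the \emph{infrared bound} holds: there is an explicit $\mathsf{C}=\mathsf{C}(N,d)\in(0,\infty)$, proportional to $N$, with
\begin{equation}\label{eq:IRB-prop}
\widehat{\mathcal{T}}_L(p)\ \le\ \frac{\mathsf{C}}{\mathcal{E}(p)}\,,\qquad p\in\T_L^{*}\setminus\{0\},\qquad \mathcal{E}(p):=\sum_{i=1}^{d}\bigl(1-\cos p_i\bigr)\,.
\end{equation}
Establishing the reflection positivity of the lattice-permutation measure (\ref{eq:definitionmeasure}) and extracting (\ref{eq:IRB-prop}) from it is where essentially all the work lies, and is the step I expect to be the main obstacle: the measure is not manifestly reflection positive, and one must first pass to the representation with coloured connected components (this is where the integrality of $N$ enters) and adjoin the ``virtual'' source vertices, after which the chessboard estimates and the Gaussian-domination bound can be applied. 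The remainder is the routine Fr\"ohlich--Simon--Spencer endgame.

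Put $\mathsf{J}_L:=\tfrac1{L^d}\sum_{p\in\T_L^{*}\setminus\{0\}}\mathcal{E}(p)^{-1}$. Since $d>2$, $\sup_L\mathsf{J}_L<\infty$: the Riemann sum converges to $\tfrac1{(2\pi)^d}\int_{[-\pi,\pi]^d}\mathcal{E}(p)^{-1}\,dp=\tfrac1d\,E^d\!\bigl(\sum_{n\ge0}\mathbbm{1}\{S_n=o\}\bigr)=\tfrac{1+r_d}{d}<\infty$, and this finiteness — equivalently $r_d<\infty$ — is the only place the hypothesis $d>2$ is used. Inverting the Fourier transform on $\T_L$, separating the zero mode, and using $\widehat{\mathcal{T}}_L(p)\ge0$ together with (\ref{eq:IRB-prop}),
\begin{equation}\label{eq:Tinversion}
\begin{aligned}
\mathcal{T}_L(x)&=\frac{1}{L^d}\widehat{\mathcal{T}}_L(0)+\frac1{L^d}\sum_{p\ne0}\widehat{\mathcal{T}}_L(p)\,e^{ip\cdot x}\\
&\le\ \frac{1}{L^d}\widehat{\mathcal{T}}_L(0)+\frac1{L^d}\sum_{p\ne0}\widehat{\mathcal{T}}_L(p)\ \le\ \frac{1}{L^d}\widehat{\mathcal{T}}_L(0)+\mathsf{C}\,\mathsf{J}_L\,.
\end{aligned}
\end{equation}
Evaluating the same inversion at $x=o$ yields the sum rule $\mathcal{T}_L(o)=\tfrac1{L^d}\widehat{\mathcal{T}}_L(0)+\tfrac1{L^d}\sum_{p\ne0}\widehat{\mathcal{T}}_L(p)$, hence $\tfrac1{L^d}\widehat{\mathcal{T}}_L(0)\ge\mathcal{T}_L(o)-\mathsf{C}\,\mathsf{J}_L$; by the explicit values of $\mathcal{T}_L(o)$ and of $\mathsf{C}$ this right-hand side is bounded below by a positive constant $\delta=\delta(N,d)$ exactly when $N<4/r_d$. (For $N=2$ this is essentially the lower bound (\ref{eq:dimerpositivityaverage}); it may equivalently be read off from Theorem~\ref{theo:theo3}.) Therefore $\widehat{\mathcal{T}}_L(0)\ge\delta L^d$ for all large even $L$.

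Combining the last two displays, $\mathcal{T}_L(x)\le\tfrac1{L^d}\widehat{\mathcal{T}}_L(0)+\tfrac{\mathsf{C}\,\mathsf{J}_L}{\delta L^d}\widehat{\mathcal{T}}_L(0)=\tfrac1{L^d}\bigl(1+\tfrac{\mathsf{C}\,\mathsf{J}_L}{\delta}\bigr)\widehat{\mathcal{T}}_L(0)$, so the required bound holds with $c:=1+\mathsf{C}\,(\sup_L\mathsf{J}_L)/\delta<\infty$. By (\ref{eq:reduction-prop}) this gives $\mathbb{P}_{L,N,0}(X=x)\le c_2/L^d$ for all large even $L$, with an explicit $c_2=c_2(N,d)$, and summing over $x\in A$ proves (\ref{eq:mainestimate}). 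As a consistency check: applying (\ref{eq:mainestimate}) with $A=\{x\in\T_L:d(o,x)\le\eps L\}$ gives $\mathbb{P}_{L,N,0}\bigl(d(o,X)\ge\eps L\bigr)\ge 1-c_2(2\eps)^d$, so the target point sits at distance of order $L$ from the origin with probability close to one — the qualitative assertion accompanying the theorem.
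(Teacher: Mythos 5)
There is a genuine gap, and it sits exactly at the point the paper flags as the main difficulty of the whole argument. Your derivation of the lower bound $\widehat{\mathcal{T}}_L(0)\ge\delta L^d$ rests on two claims about the two-point function at $\rho=0$: that $\widehat{\mathcal{T}}_L(p)\ge 0$ for all $p$, and that $\mathcal{T}_L(o)$ is a strictly positive explicit constant. Both are false for lattice permutations at $\rho=0$. By Lemma \ref{lemma:Fourierproperties}(iv) the two-point function vanishes identically on the even sublattice; in particular $\mathbb{G}_{L,N,0}(o,o)=0$, so your sum rule at $x=o$ only yields $\tfrac1{L^d}\widehat{\mathcal{T}}_L(0)\ge -\mathsf{C}\mathsf{J}_L$, which is vacuous. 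Moreover, since $\mathbb{G}_{L,N,0}$ is supported on $\T_L^o$, one has $\widehat{\mathbb{G}}_{L,N,0}(\pi,\ldots,\pi)=-\widehat{\mathbb{G}}_{L,N,0}(0)<0$, so the two-point function is not of positive type and the step $\sum_{p\ne 0}\widehat{\mathcal{T}}_L(p)e^{ip\cdot x}\le\sum_{p\ne 0}\widehat{\mathcal{T}}_L(p)$ in (\ref{eq:Tinversion}) is unjustified. This antiferromagnetic structure is precisely why the paper (Remark \ref{remark:differentcase}) cannot run the standard Fr\"ohlich--Simon--Spencer endgame verbatim: it replaces $\mathbb{G}(o)$ by $\mathbb{G}(\boldsymbol{e}_1)=\tfrac1{dN}$, controls the \emph{difference} of the $0$ and $(\pi,\ldots,\pi)$ modes, and uses the antisymmetry $\widehat{\mathbb{G}}^o(k+\pi\boldsymbol{u})=-\widehat{\mathbb{G}}^o(k)$ to fold the Fourier sum onto the region where $\cos(k\cdot\boldsymbol{e}_1)\ge 0$ before invoking the infrared bound. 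Your proof as written does not recover from this; you would have to import Theorem \ref{theo:theo3} (or the Infrared--ultraviolet bound, Theorem \ref{theo:Infrared}) wholesale for the lower bound on $\widehat{\mathcal{T}}_L(0)$, which you only mention parenthetically as an ``equivalent'' reading.

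Separately, the Fourier-inversion upper bound on $\mathcal{T}_L(x)$ is unnecessary even where it is salvageable. The paper's proof of Theorem \ref{theo:theo2} given Theorem \ref{theo:theo3} is three lines: write $\mathbb{P}_{L,N,0}(X\in A)=\sum_{x\in A}\mathbb{G}_{L,N,0}(o,x)\big/\sum_{x\in\T_L}\mathbb{G}_{L,N,0}(o,x)$, bound the numerator by $|A|/(dN)$ using the pointwise bound $\mathbb{G}_{L,N,0}(x)\le\tfrac1{dN}$ (which comes from (\ref{eq:precisevalue}) and site-monotonicity, not from Fourier analysis), and bound the denominator below by $|\T_L^o|\cdot\tfrac1{2d}(\tfrac2N-\tfrac{r_d}2)$ using (\ref{eq:permpositivityaverage}). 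Only the denominator requires the infrared machinery, and that machinery must be the modified, odd/even-symmetrised version described above.
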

For example, by choosing $A = \T_{\lfloor \epsilon L  \rfloor}$ for a small enough $\epsilon$, we see that with uniformly positive probability the target point is at a distance at least $\epsilon L$ from the origin.
The restriction of our result to not-too-large values of $N$ is not a limitation of our technique: It was shown by Chayes \textit{et al.} \cite{Chayes} that, in any dimension $d \geq 2$, if $N$ is a large enough integer, the loop length admits uniformly bounded exponential moments for any value $\rho \in [0, \infty)$ (in 
\cite{Chayes} a different setting than ours is considered,
with only loops, which are allowed to overlap a bounded number of times, and no walk;  the proof of \cite{Chayes}  can be adapted to our setting implying that  length of the self-avoiding walk does not grow unboundedly with the size of the system and admits uniformly bounded exponential moments).
Hence, not only we prove the occurrence of a phase transition with respect to the variation of $\rho$ for integer values $N \in (0, \frac{4}{r_d})$, 
but we also prove the occurrence of a phase transition with respect to the variation of $N$ when we fix $\rho = 0$.

\paragraph{Uniform positivity.}
Our third main theorem, Theorem \ref{theo:theo3} below, can be viewed as a generalisation of Theorem \ref{theo:theo1} and  states that the two point function of lattice permutations is bounded away from zero point-wise  when the points lie along the same cartesian axis and `on average' across all  points, uniformly with respect to the system size.
To define the two-point function we need to introduce the set of multi-graphs $\Omega^\ell$, whose connected components are  loops, double edges or monomers and no walk is present.  Thus, let $\Omega^\ell$ be the set of directed multi-graphs $\pi = (\T_L, E_\pi)$ such that: 
\textbf{(i)} the edges connect nearest-neighbour vertices in the torus and, \textbf{(ii)} the  in- and the out-degree of every vertex in $\T_L$ are equal and their value is either zero or one.\footnote{Alternatively, $\Omega^\ell$ could be defined as the set of \textit{permutations} of the elements of $\T_L$ such that every vertex is mapped either to itself or to a nearest neighbour, the same as  in \cite{Betz2}. Here we keep the name `permutations', but we define the realisations as  multi-graphs.}
It follows from this definition that every connected
component of the graph  $\pi \in \Omega^\ell$ is either a monomer, a loop or a double edge, which we defined before.
We extend the definition of the number of monomers, $\mathcal{M}(\pi)$,  and of the number or loops and double edges, $\mathcal{L}(\pi)$, 
which were provided before, to the graphs $\pi \in \Omega^{\ell}$.
For any $L \in \mathbb{N}$,  $\rho, N \in [0, \infty)$,  we define the \textit{loop partition function,}
\begin{equation}\label{eq:partitionloop}
\mathbb{Z}_{L,  N, \rho}^{\ell} := \sum\limits_{\pi \in \Omega^\ell} \rho^{\mathcal{M}(\pi)} (\frac{N}{2})^{  \mathcal{L}(\pi) },
\end{equation}
and, for  any $x, y \in \T_L$, we define 
the \textit{directed partition function,} 
\begin{equation}\label{eq:partitiondirected}
\mathbb{Z}_{L, N, \rho}(x,y) := \sum\limits_{\pi \in \Omega_{x,y}} \rho^{\mathcal{M}(\pi)} (\frac{N}{2})^{  \mathcal{L}(\pi) },
\end{equation}
Finally, we define the \textit{two point function},
\begin{equation}\label{eq:twopoint}
\mathbb{G}_{L, N, \rho}(x,y) :=     \frac{ \mathbb{Z}_{ L, N, \rho}(x,y)}{\mathbb{Z}_{L, N, \rho}^{\ell}},
\end{equation}
and we define $\mathbb{G}_{L, N, \rho}(x) : = \mathbb{G}_{L, N, \rho}(o, x)$.
In the special case of $N = 2$ and $\rho = 0$,  the two-point function of lattice permutations corresponds to the monomer-monomer correlation function of the dimer model,
\begin{equation}\label{eq:relationdimerpermutationstat}
\forall x \in \T_L \quad  \mathbb{G}_{L, 2, 0}(x) = \Xi_L(x).
\end{equation}
Indeed, as we prove in (\ref{eq:relationdimerpermutation}) below, 
the set of configurations which are obtained by superimposing two independent dimer covers, like in Figure \ref{Fig:dimerssuperposition},  
are in a one-to-one correspondence with the set of  fully-packed lattice permutations and this leads to (\ref{eq:relationdimerpermutationstat}).
In light of (\ref{eq:relationdimerpermutationstat}), our 
Theorem \ref{theo:theo3} below, which holds for
 arbitrary (not necessarily equal to $2$) integers $N$, can be viewed as a generalisation of Theorem \ref{theo:theo1}.
 \begin{theorem}\label{theo:theo3}
Suppose that $d >2$ and that $N$ is an integer in $(0, \frac{4}{r_d})$.
Then,
\begin{equation}\label{eq:permpositivityaverage}
 \liminf\limits_{ \substack{L \rightarrow \infty \\ \mbox{ \footnotesize $L$ even } } } \, \, \frac{1}{| \T^o_L |}  \sum_{x \in \T^o_L}  \mathbb{G}_{L, N, 0}(x)  \, 
 \geq \, \frac{1}{2d} (  \frac{2}{N} - \frac{r_d}{2}   ).
\end{equation}
Moreover,  for any $\varphi \in \big ( \, 0, \, \frac{1}{2d} (  \frac{2}{N} - \frac{r_d}{2}   ) \, \big ),$  there exists an (explicit) constant  $c_3= c_3(\varphi, d, N) \in (0, \frac{1}{2})$ such that  for any  large enough $L \in 2 \mathbb{N}$ and any odd integer $n \in ( \, 0, \, c_3  \, L \, )$, 
\begin{equation}\label{eq:permpointwise}
 \mathbb{G}_{L, N, 0}( \, n \, \boldsymbol{e}_1 \, ) \geq \varphi.
\end{equation}
\end{theorem}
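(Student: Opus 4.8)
\section*{Proof proposal for Theorem \ref{theo:theo3}}

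The plan is to deduce both statements from a single infrared-type bound obtained by reflection positivity, applied to the loop partition function $\mathbb{Z}^{\ell}_{L,N,0}$ with a suitably introduced family of ``virtual'' source vertices that inject walks into the system. The starting point is the observation — to be proved by an inclusion–exclusion / superposition argument as announced around \eqref{eq:relationdimerpermutationstat} — that a fully-packed lattice permutation of $\T_L$ carries a random loop configuration for which one can write a Fourier-space representation of $\sum_{x} \mathbb{G}_{L,N,0}(x)\,e^{ip\cdot x}$ in terms of a Gaussian-domination bound. Concretely, I would set up the general probabilistic framework alluded to in the introduction (coloured loops and walks interacting by mutual exclusion at sites, with the walk weighted by $(N/2)^{\mathcal L}$ and entering from virtual vertices), verify that the associated measure is reflection positive with respect to reflections through hyperplanes of $\T_L$ — the chessboard estimate being the key combinatorial input, where the mutual-exclusion interaction at sites must be checked to factorise correctly under reflection — and then run the Fröhlich–Simon–Spencer argument verbatim in this path space to obtain
\begin{equation}\label{eq:IRplan}
\widehat{\mathbb{G}}_{L,N,0}(p) \;\leq\; \frac{1}{2}\,\frac{2/N}{\sum_{i=1}^d (1-\cos p_i)},
\end{equation}
for all nonzero $p$ in the dual torus, where $\widehat{\mathbb{G}}$ is the Fourier transform of $x \mapsto \mathbb{G}_{L,N,0}(x)$ (the factor $2/N$ replacing the usual normalisation because each loop carries weight $N/2$ rather than $1$).

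The second ingredient is a sum rule. Because the walk in a fully-packed configuration is obtained by ``cutting open'' a loop (or a double edge, or creating a degenerate walk at a monomer-free site), summing $\mathbb{G}_{L,N,0}(x)$ over a neighbourhood of the origin, together with the fully-packed constraint, gives a deterministic identity of the form $\mathbb{G}_{L,N,0}(o)=\tfrac{1}{N}$ up to the analysis of the one-step return probabilities, and more generally $\sum_{x \sim o}\mathbb{G}_{L,N,0}(x)$ is controlled below by $1 - \tfrac{1}{N}r_d + o(1)$ via comparison with the simple random walk Green's function — this is precisely where the constant $r_d$ and the hypothesis $N < 4/r_d$ enter, ensuring the right-hand side is positive. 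Combining this sum rule with \eqref{eq:IRplan} through the standard infrared-bound bookkeeping (Parseval, splitting the $p=0$ mode from the rest, and bounding $\sum_{p\ne 0}\widehat{\mathbb{G}}(p)$ by the integral of the right-hand side of \eqref{eq:IRplan}, which converges precisely because $d>2$) yields that the $p=0$ mode, i.e. $|\T_L|^{-1}\sum_x \mathbb{G}_{L,N,0}(x)$, is bounded below by $\tfrac{1}{2d}(\tfrac{2}{N}-\tfrac{r_d}{2})$ in the limit, which is \eqref{eq:permpositivityaverage}.

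For the pointwise bound \eqref{eq:permpointwise} I would upgrade the Cesàro statement using monotonicity. The site-monotonicity properties of \cite{Lees} (already invoked for \eqref{eq:pointwiseupper}) should give that $n \mapsto \mathbb{G}_{L,N,0}(n\boldsymbol{e}_1)$ is, along the axis and restricted to odd $n$, comparable to its average in a controlled way; alternatively one combines the uniform lower bound on the average with the a priori uniform upper bound $\mathbb{G}_{L,N,0}(x)\le \tfrac{1}{2d}$ (the analogue of \eqref{eq:pointwiseupper}, which itself follows from \cite[Remark 2.5]{Lees}) and a reflection/chessboard argument to transfer positivity of the average to positivity at a macroscopically dense set of sites, and finally to every odd axial site $n\in(0,c_3 L)$ by a further monotonicity or finite-energy step. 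The explicit constant $c_3$ then comes from tracking how much of the torus must be excluded near the antipode for these comparisons to hold.

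The main obstacle I expect is establishing reflection positivity in this path/loop setting: unlike the spin $O(N)$ model, there is no obvious Gaussian or spin representation to fall back on, so one must verify directly — via a chessboard estimate for the loop-and-walk gas with site exclusion — that reflecting a configuration through a hyperplane and gluing it to its mirror image preserves (or factorises) the weight $\rho^{\mathcal M}(N/2)^{\mathcal L}$, including the delicate bookkeeping of how loops and the open walk are split by the reflection plane and of the virtual source vertices introduced to make the two-point function appear. Getting the combinatorial factors exactly right here — so that the constant $2/N$ in \eqref{eq:IRplan} is sharp and the hypothesis $N<4/r_d$ is exactly what is needed — is the crux of the argument.
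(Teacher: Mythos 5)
Your overall architecture for Part I (reflection positivity on an extended graph with virtual source vertices, a chessboard estimate, and a Gaussian-domination/infrared bound for $\hat{\mathbb{G}}_{L,N,0}$) matches the paper's, and your plan for upgrading the Ces\`aro bound to the pointwise bound \eqref{eq:permpointwise} via the site-monotonicity of \cite{Lees} together with the a priori upper bound is essentially the argument used. However, there is a genuine gap at the heart of Part II. Your "sum rule" $\mathbb{G}_{L,N,0}(o)=\tfrac{1}{N}$ is false: at $\rho=0$ one has $\mathbb{G}_{L,N,0}(o)=0$, and more generally $\mathbb{G}^{e}_{L,N,0}(x)=0$ at \emph{every} even site (Lemma \ref{lemma:Fourierproperties}(iv)) — a walk ending at an even site forces an odd number of covered sites and hence a monomer, which carries weight $\rho^{\mathcal M}=0$. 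Consequently the "standard infrared-bound bookkeeping" you invoke (Parseval at the origin, splitting off the $p=0$ mode, bounding $\sum_{p\neq 0}\hat{\mathbb{G}}(p)$ by a convergent integral) cannot close: the identity $\mathbb{G}(o)=|\T_L|^{-1}\sum_k\hat{\mathbb{G}}(k)$ has a vanishing left-hand side, and an \emph{upper} bound on $\hat{\mathbb{G}}(k)$ for $k\neq 0$ then gives no positive lower bound on the zero mode. This is exactly the obstruction the paper singles out in Remark \ref{remark:differentcase}.

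The missing idea is the replacement of the on-site term by the nearest-neighbour term $\mathbb{G}_{L,N,0}(\boldsymbol{e}_1)=\tfrac{1}{dN}$ (which is computed exactly by the edge-insertion bijection in \eqref{eq:relationtwopointprobability}), at the price of an oscillating phase $e^{ik\cdot\boldsymbol{e}_1}$ in the Fourier sum, cf.\ Lemma \ref{lemma:relationmodus}. The quantity that is then bounded from below is the \emph{difference} of the $k=o$ and $k=(\pi,\dots,\pi)$ modes, and to control $\sum_{k\neq o,p}\cos(k\cdot\boldsymbol{e}_1)\hat{\mathbb{G}}(k)$ one must exploit that $\hat{\mathbb{G}}^{o}$ is antisymmetric and $\hat{\mathbb{G}}^{e}$ symmetric under the translations $k\mapsto k+\pi\boldsymbol{u}$ of the dual torus, folding the sum onto the region where $\cos(k\cdot\boldsymbol{e}_1)\geq 0$ before applying the high-frequency bound $\hat{\mathbb{G}}(k)\leq \varepsilon(k)^{-1}$ (note: the paper's bound carries no $2/N$ factor; the $2/N$ in the final constant comes from $\mathbb{G}(\boldsymbol{e}_1)=\tfrac{1}{2d}\cdot\tfrac{2}{N}$, not from the infrared bound). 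The resulting folded sum converges to a Green's-function integral equal to $r_d/(4d)$, which is where $r_d$ and the hypothesis $N<4/r_d$ actually enter — not through a lower bound on $\sum_{x\sim o}\mathbb{G}(x)$ as you suggest. Without this symmetrisation step your route does not produce the stated constant, nor indeed any positive lower bound.
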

Similarly to the case of the dimer model, from the site-monotonicity properties which were derived in \cite{Lees} we deduce that, for any integer $N \in \mathbb{N}_{>0}$ and any $\rho \in [0, \infty)$,
\begin{equation}\label{eq:uniformpointwisesupper}
\forall x \in \T_L \quad  \mathbb{G}_{L, N, \rho}(x) \leq \frac{1}{d N}.
\end{equation}
Since $r_d = O(\frac{1}{d})$, our uniform lower bound on the average (\ref{eq:permpositivityaverage}) 
and the uniform point-wise upper bound (\ref{eq:uniformpointwisesupper}) on the two-point function get closer to each other as $d$ is larger. From this we deduce that, the larger is the dimension, the more uniform is the distribution of the target point across the sites of the torus.


\section{Proof description}
\label{sect:mainingredients}
Most of the paper is devoted to the proof of (\ref{eq:permpositivityaverage}), from which all our main results follow. The proof of (\ref{eq:permpositivityaverage})  is divided into two main parts.  The {first part} is devoted to the  derivation of the Key Inequality, Theorem \ref{theo:keyinequality} below,  from the analysis of a general soup of loops and walks, to which we refer as \textit{random path model.} 
The random path model was introduced in \cite{Lees} and it is a generalisation of the random wire model  \cite{Benassi}, which,  in turn, can be viewed as a reformulation of the random walk representation of the spin O(N) model  \cite{Brydges}. In \cite{Lees} it was shown that the random path model satisfies the important property of reflection positivity (which will be stated later). The property of reflection positivity for random loop models was used also in  \cite{Chayes, Lees2, Ueltschi}.  However, in  such works
the additional structures which allow the derivation of the Key Inequality  \textit{directly} from the space of loops and walks (i.e, without employing any spin representation) have not been introduced. The most important technical novelty of this paper is the introduction of such structures. This allows the extension of the method of  \cite{Frohlich, FrohlichII, FrohlichII} to random loop models for which no spin representation exists or  is easy to derive, for example lattice permutations (and, consequently,  the dimer model). More precisely, our analysis involves the study of the  random path model with  appropriate weights   in an `extended' graph, which is obtained from the original torus by adding `virtual' vertices on the `top'  of each  vertex of the `original'  torus; such virtual vertices serve as a source for the walks, and the walks  get a different weight depending on where they start from; the whole setting is designed in such a way that the reflection positivity property, which was proved to hold true in the torus \cite{Lees}, is preserved.

The second part is devoted to the derivation of a version of the so-called infrared bound from such a Key Inequality.  Here we use Fourier transforms  similarly to the case of   spin systems with continuous symmetry  \cite{Frohlich, FrohlichI, FrohlichII}, in which case the two point function corresponds to the correlation between two spins. Our analysis differs from such a classical case  for some non-trivial aspects. The most important difference is that, in our case, the two-point function vanishes at any even site as $\rho \rightarrow 0$. In other words, the model exhibits a sort of anti-ferromagnetic ordering, similarly to  \cite{Feldheim}. This introduces some difficulties which are overcome by exploiting the different symmetry properties of the Fourier odd and even two point functions (which will be introduced later) with respect to appropriate translations in  the (Fourier) dual torus.

We now describe the two parts of the proof in greater detail and state Theorem \ref{theo:keyinequality} and Lemma \ref{lemma:relationmodus}. In the third and last subsection, we present the (short) proof of Theorem \ref{theo:theo1} given Theorem \ref{theo:theo3}.

\subsection{Description of part I: Derivation of the Key Inequality}
\label{sect:ingredientspart1}
The first part of the proof, which is presented in Section \ref{sect:therandom}, is devoted to the derivation of Theorem \ref{theo:keyinequality}, which is stated below.
For an arbitrary vector of real numbers, $\boldsymbol{v} = (v_z)_{z \in \T_L}$,
define the \textit{discrete Laplacian} of $\boldsymbol{v}$,
\begin{equation}\label{eq:discretelaplacian}
\forall x \in \T_L \quad \quad (\triangle v)_x : = \sum\limits_{\substack{ y \in \T_L : \\ x \sim y}} ( v_y - v_x ).
\end{equation}
\begin{theorem}[\textbf{Key Inequality}]\label{theo:keyinequality}
For any $N \in \mathbb{N}_{>0}$, 
$\rho \in \mathbb{R}_{ \geq 0}$,
 $L \in 2 \mathbb{N}_{>0}$,  any real-valued vector
 $\boldsymbol{v} = (v_x)_{x \in \T_L}$, we have that,
\begin{equation}\label{eq:starting point infrared}
\sum\limits_{x,y \in \T_L} \mathbb{G}_{L, N, \rho}(x,y) (\triangle v )_x \, 
(\triangle v)_{y} \,  
\leq  \, \sum\limits_{\{x,y\} \in \E_L} \big (v_y - v_x \big )^2.
\end{equation}
\end{theorem}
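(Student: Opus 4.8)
The natural strategy is to realize the left-hand side as a Gaussian-type quadratic form coming from the random path model on the extended graph (original torus plus one "virtual" vertex sitting above each site of $\T_L$), and then to compare it with a reference Gaussian domination / Cauchy–Schwarz estimate exactly as in the Fröhlich–Simon–Spencer scheme, but carried out directly in the space of loops and walks rather than in a spin representation. First I would introduce the vector $\boldsymbol{h} = (h_z)_{z\in\T_L}$ with $h_z := (\triangle v)_z$, so the claim reads $\sum_{x,y} \mathbb{G}_{L,N,\rho}(x,y)\, h_x h_y \le \sum_{\{x,y\}\in\E_L}(v_y-v_x)^2$. Note $\sum_{z} h_z = 0$, which will be essential: the virtual-vertex weights only probe $\boldsymbol{h}$ through differences, so the constant mode is automatically annihilated, and this is what lets the quadratic form on the left be dominated by the Dirichlet form of $\boldsymbol{v}$ on the right.

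The key step is to recognize $\sum_{x,y}\mathbb{G}_{L,N,\rho}(x,y) h_x h_y$ as (a multiple of) the partition function of the random path model in which, at each virtual vertex $\widehat{z}$, a walk may enter the system carrying the signed weight $h_z$, and the two endpoints of such a walk are weighted by $h_x h_y$. Concretely: $\mathbb{Z}_{L,N,\rho}(x,y)$ is the constrained partition function with a forced open path from $x$ to $y$, so $\sum_{x,y}\mathbb{Z}_{L,N,\rho}(x,y)h_x h_y$ is, up to the normalization $\mathbb{Z}^\ell_{L,N,\rho}$, the expectation of $\big(\sum_{\text{walk endpoints}} \pm h\big)$-type observables. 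I would set up the extended random path model so that summing over configurations with zero, one, or more walks originating at virtual vertices produces exactly an exponential-moment generating object, and then invoke the reflection positivity of the random path model established in \cite{Lees}: reflection positivity with respect to reflections through hyperplanes (both through sites and through edges) gives the chessboard/Gaussian-domination inequality, which says precisely that this generating functional is bounded by its value at the Gaussian reference point, i.e. by $\exp\big(\tfrac12\sum_{\{x,y\}\in\E_L}(v_y-v_x)^2\big)$ after matching the Dirichlet form. Differentiating this Gaussian-domination bound twice in the strength of the source (or extracting the quadratic term in a small-field expansion) yields \eqref{eq:starting point infrared}; the linear term vanishes because $\sum_z h_z=0$, and the quadratic term on the right is exactly the lattice Dirichlet energy $\sum_{\{x,y\}\in\E_L}(v_y-v_x)^2$.

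The main obstacle I anticipate is bookkeeping the combinatorial weights in the extended graph so that (a) reflection positivity genuinely survives the addition of the virtual vertices and the signed source weights $h_z$ — one must choose the coupling of virtual edges and the placement of the reflection planes so that the virtual layer is reflected consistently with the torus — and (b) the "two walks from virtual vertices" term produces the two-point function $\mathbb{G}_{L,N,\rho}(x,y)$ with the correct sign and the correct power of $N/2$ and not some other loop observable (this is where the factor conventions in \eqref{eq:definitionmeasure}, \eqref{eq:partitionloop}, \eqref{eq:partitiondirected} must be tracked carefully, e.g. accounting for whether a walk entering and leaving the virtual layer closes up into a loop or stays open). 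A secondary technical point is justifying the interchange of the "extract quadratic coefficient" operation with the reflection-positivity inequality — this is standard (it is just positivity of a quadratic form obtained as a limit of positive quantities), but it should be stated cleanly. Once the extended-graph model is correctly set up and shown to be reflection positive, the Gaussian-domination argument and the passage to \eqref{eq:starting point infrared} are essentially the classical FSS argument transported to the path setting.
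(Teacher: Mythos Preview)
Your overall architecture is right and matches the paper: extended torus with one virtual vertex above each original vertex, reflection positivity in this extended graph, chessboard estimate, and extraction of the quadratic coefficient from a small-source expansion. But you have misidentified the crucial mechanism, and as written the argument does not close.

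The source is \emph{not} $h_z = (\triangle v)_z$ placed at virtual vertices. In the paper one places sources at \emph{both} layers: $h^{\boldsymbol v}_x = v_x$ at original $x\in\T_L$ and $h^{\boldsymbol v}_{x+\boldsymbol e_{d+1}} = -2d\,v_x$ at the virtual vertex above $x$. With this choice, two separate identities fall out of the second-order term of the polynomial expansion of $\mathcal Z_{L,N,\lambda,H}(\varphi\,\boldsymbol h^{\boldsymbol v})$. The \emph{walk} contribution involves, at each endpoint $x$, the sum of $h^{\boldsymbol v}$ over all neighbours of $x$ in the \emph{extended} torus, namely $\sum_{y\sim x} v_y + (-2d\,v_x) = (\triangle v)_x$; this is how the discrete Laplacian appears. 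The \emph{segment} contribution (a single unpaired link on an edge) gives $\sum_{\{x,y\}\in\E_L} v_x v_y + \tfrac12\sum_{x\in\T_L} v_x(-2d\,v_x) = -\tfrac12\sum_{\{x,y\}\in\E_L}(v_x-v_y)^2$; this is how the Dirichlet form appears, and the factor $\tfrac12$ on the original--virtual segments is engineered into the weight function $H$ precisely for this cancellation. Your proposal to get the Dirichlet form ``after matching'' from a Gaussian-domination exponential does not work as stated: with sources $h=\triangle v$ at virtual vertices only, the natural diagonal term is $\sum_z (\triangle v)_z^2$, not $\sum(v_x-v_y)^2$, and there is no general inequality between them in the right direction.

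A second point: the linear term in $\varphi$ vanishes not because $\sum_z h_z=0$, but simply because every configuration has an even total number of unpaired link-ends (each open path contributes exactly two). And the chessboard step is used not to compare with a Gaussian reference, but to compare $\mathcal Z(\varphi\boldsymbol h^{\boldsymbol v})$ with the geometric mean of the homogenised quantities $\mathcal Z(\varphi(\boldsymbol h^{\boldsymbol v})^x)$, each of which has vanishing $O(\varphi^2)$ term because $(\boldsymbol h^{\boldsymbol v})^x$ is constant on each layer; this forces the $O(\varphi^2)$ term of the original to be $\le 0$, which is exactly the Key Inequality after dividing by $\tfrac{\lambda N}{2}\mathbb Y^\ell_{L,N,\lambda}$ and using $\mathbb G_{L,N,1/\lambda}=\lambda\,\mathbb Y_{L,N,\lambda}/\mathbb Y^\ell_{L,N,\lambda}$.
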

The proof of Theorem \ref{theo:keyinequality} uses several ingredients
which we now describe informally. We deal with the random path model, namely a  probabilistic model of coloured closed and open paths,
which interact at sites through a weight function,
which will be denoted by $U$. 
The model depends on an edge parameter 
$\lambda  \in [0,\infty)$ which, informally, has the effect of increasing the typical length of the paths  as $\lambda$ is larger.

We introduce a new setting which  is reminiscent of the random current representation of the Ising model \cite{ProceedingsCopin}. Such a setting involves the random path model on a graph 
 $(\mathcal{T}_L, \mathcal{E}_L)$, which is obtained from the torus  $(\T_L, \E_L)$ by adding a new vertex (which will be referred to as \textit{virtual}) on the top of each vertex  in $\T_L$ and by connecting such a new vertex to the one which is below it by an edge, like in Figure \ref{Fig:exampledoubletorus}. 
We refer to such a new graph $(\mathcal{T}_L, \mathcal{E}_L)$ as \textit{extended torus} and to the graph $(\T_L, \E_L) \subset (\mathcal{T}_L, \mathcal{E}_L)$, which was defined previously, as  \textit{original torus}.
Virtual vertices play the role of sources for open paths and closed paths  are not allowed to `touch'  any virtual vertex.
Such a  setting is designed in such a way that the measure associated to the random path model on such a graph satisfies two fundamental properties \textit{at the same time}. The first fundamental property is  \textit{reflection positivity}. 
The second fundamental property involves a central quantity, $\mathcal{Z}_{L, N, \lambda, U}(\boldsymbol{v})$, where $\boldsymbol{v} = (v_z)_{z \in \mathbb{T}_L}$ is an array of real numbers, with each number being associated to a vertex of the original torus.
The quantity $\mathcal{Z}_{L, N, \lambda, H}(\boldsymbol{v})$ is defined as the  average of a function which assigns a multiplicative weight $v_z$ every time that a walk starts (or ends) at a vertex of the original torus $z \in \T_L$
and a multiplicative weight $ - 2 d v_z$ every time that a walk starts (or ends) at the virtual vertex  which is `on the top' of $z\in \T_L$.
Such a fundamental property is stated in  (\ref{eq:expansion1}) below and involves the infinitesimal variation of the function  $\mathcal{Z}_{L, N, \lambda, U}(\boldsymbol{v})$ around the point  $\boldsymbol{v} = 0$
when a specific choice of the weight function, $U = H$, is made.
More precisely, for an arbitrary choice of $\boldsymbol{v} \in \mathbb{R}^{\T_L}$ and  $\varphi \in \mathbb{R}$,  in the limit as $\varphi \rightarrow 0$
\begin{multline}\label{eq:expansion1}
\mathcal{Z}_{L, N, \lambda, 
H}(\varphi \boldsymbol{v}) =
 \lambda^{   |\T_L|  }\, \,  \mathbb{Z}^\ell _{L, N, \frac{1}{\lambda}}
- \varphi^2 \,  \frac{\lambda N}{2} \,  \lambda^{   |\T_L|  }\, \, 
\mathbb{Z}^\ell_{L, N, \frac{1}{\lambda}} \sum\limits_{\{x,y\} \in \E_L} \big (v_y - v_x \big )^2
\, \, \\     \,  + \varphi^2 \,  \frac{\lambda N}{2} \, \lambda^{   |\T_L|  }\sum\limits_{x,y \in \T_L} \mathbb{Z}_{L, N, \frac{1}{\lambda}}(x,y) (\triangle v )_x \, 
(\triangle v)_{y} \, + o(\varphi^2).
\end{multline}
To derive (\ref{eq:expansion1}) we introduce a map which maps configurations of the random path model to configurations of lattice permutations and compare their weights.
Here we use in an essential way the structure of the extended torus: the walks which enter into the original torus 
from a virtual vertex are weighted differently than the walks which start from a vertex of the original torus and the weights are chosen appropriately so that we get the discrete Laplacians and the sum involving factors  $(v_y - v_x)^2$ 
in (\ref{eq:expansion1}).
Also the properties of the random path model and of the weight function $H$, which allows the walk to be vertex-self-avoiding at every vertex except for its end-points, are used in an essential way.
We refer to this central step of the proof as \textit{Polynomial expansion.}
The reason why the expansion (\ref{eq:expansion1}) is so important is that it is possible to deduce the Key Inequality by showing that, for any vector $\boldsymbol{v} \in \mathbb{R}^{\T_L}$, the term of order $O(\varphi^2)$ in (\ref{eq:expansion1})
is non-positive.
Indeed, the reader can verify that, from the non-positivity of the term  of order $O(\varphi^2)$ and  from the definition of two-point function, (\ref{eq:twopoint}), Theorem \ref{theo:keyinequality} follows immediately
after dividing the whole expression by $\frac{\lambda N}{2} \lambda^{   |\T_L|  } \mathbb{Z}_{L, N, 1 / \lambda}^\ell$.

It is for the proof of such a concavity property 
of the function $\mathcal{Z}_{L, N, \lambda, 
H}(\boldsymbol{v})$ that we use reflection positivity. More precisely, such a concavity property follows from an iterative use of reflections,
which leads by reflection positivity to the  \textit{Chessboard estimate},
\begin{equation}\label{eq:gaussiansummary}
\mathcal{Z}_{L, N, \lambda, U}(\boldsymbol{v})  \, \leq \Big (  \prod_{x \in \T_L}       \mathcal{Z}_{L, N, \lambda, U}(\boldsymbol{v}^x) \Big )^{ \frac{1}{| \T_L| }  },
\end{equation}
where $\boldsymbol{v}^x =  (v^x_z)_{z \in \T_L}$ is a vector which is obtained from $\boldsymbol{v} := (v_z)_{z \in \T_L}$ by copying the value $v_x$ at each original vertex.
Since for each $x \in \T_L$, the term of order  $O(\varphi^2)$ 
is zero when we look at the   vectors $\boldsymbol{v}^x$, i.e, 
\begin{equation}\label{eq:expansion2}
\mathcal{Z}_{L, N, \lambda, H}(\varphi \boldsymbol{v}^x)  = \lambda^{  | \T_L|  } 
\, \, \mathbb{Z}^\ell _{L, N, \frac{1}{\lambda}} + o(\varphi^2),
\end{equation}
we deduce from 
(\ref{eq:expansion1}),  (\ref{eq:gaussiansummary}), and  (\ref{eq:expansion2})
and from a  Taylor expansion of the root in (\ref{eq:gaussiansummary}) that 
 \textit{the term of order $O(\varphi^2)$ in (\ref{eq:expansion1}) is non-positive.} This is the desired concavity property.

 \begin{figure}
  \centering
    \includegraphics[width=0.60\textwidth]{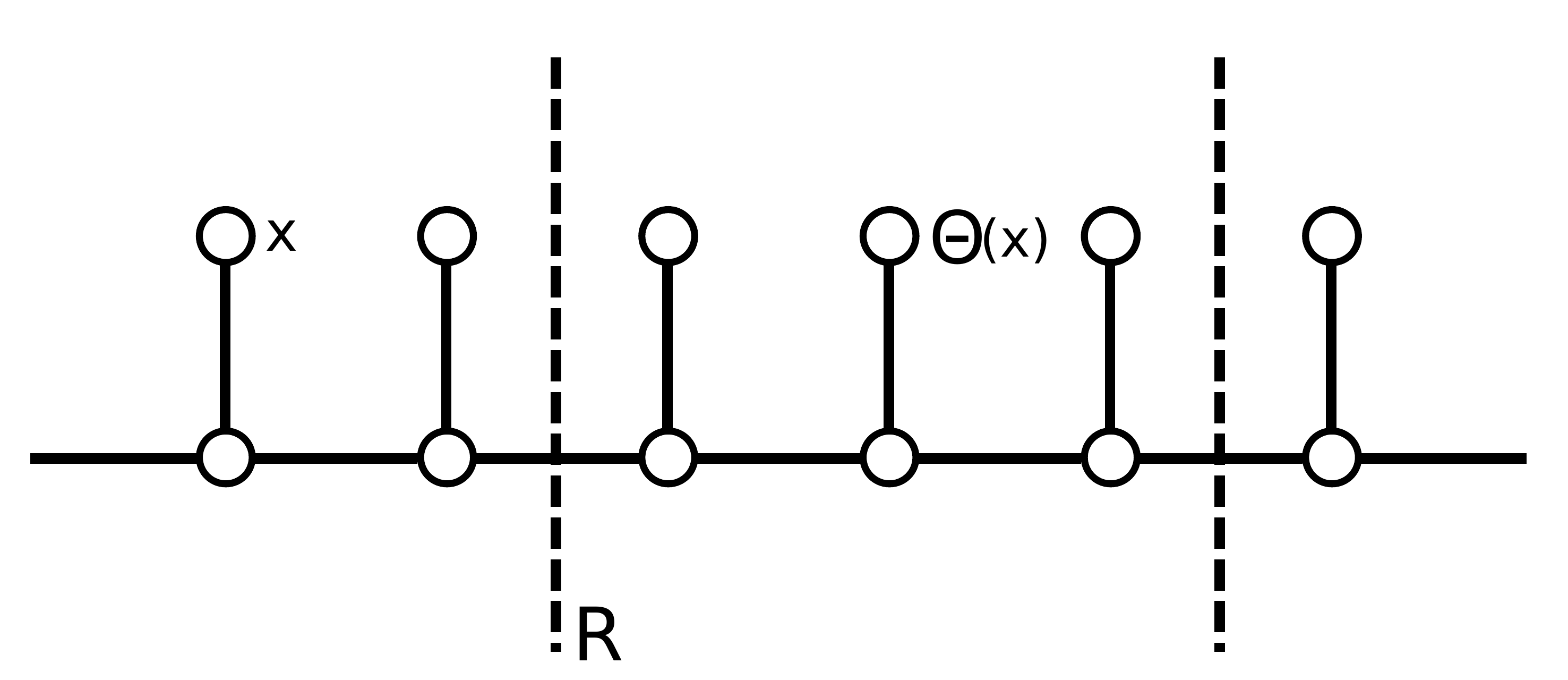}
      \caption{An \textit{extended torus} when  $d = 1$ and $L=6$. The left-most and the right-most horizontal edges are identified.
            The leftmost vertical dashed line represents a reflection plane, $R$, which, for example, maps the vertex $x$ to $\Theta(x)$. In the figure  $x$ is a virtual vertex, while the one which is `below it' is original.}\label{Fig:exampledoubletorus}
\end{figure}

 \begin{remark}\label{rem:extension}
The random path model, which depends on an arbitrary weight function $U$,
is related by the  expansion (\ref{eq:expansion1}) to lattice permutations when a specific choice for $U$ is made.
Our method can be adapted to any weight function $U$ satisfying the general assumptions in Definitions \ref{def:measure} and \ref{def:invariantunderreflection} below.
For example, there exists a special choice of the weight function $U$ which satisfies such assumptions such that the random path model is a \textit{representation} of the spin O(N) model \cite{Benassi, Lees} and our method can be used to derive   the famous result of Fr\"ohlich, Simon and Spencer \cite{Frohlich}, which involves the spin O(N) model, directly from its representation as a random loop model.   
Our method can also be adapted to random path models with weight function $U$ for which no spin representation is known, for example lattice permutations and the dimer model, and it can thus be viewed as an extension of \cite{Frohlich}.

\end{remark}

\subsection{Description of part II: Derivation of a version of the  Infrared bound}
\label{sect:ingredientspart2}
We now give a brief overview to the second part of the proof, which is presented in Section \ref{sect:InfraredBound} and uses Fourier transforms.
To begin, we define the  \textit{dual torus},
$$
\T_L^* := \big  \{ \frac{ 2 \pi }{L} \, (n_1, \ldots, n_d) \in  \mathbb{R}^d \, \, : \, \, 
n_i \in (- \frac{L}{2}, \frac{L}{2}] \cap  \mathbb{Z}  \, \big \}.
$$
We  denote the elements of $\T_L^*$ by $k = (k_1, \ldots, k_d)$
and we  keep using the notation $o$ for $(0, \ldots 0) \in \T_L $
or $(0, \ldots, 0) \in \T_L^*$. 
Given a function $f \in \ell^2(\T_L)$, we define its Fourier transform,
\begin{equation}\label{eq:Fourierdefinition}
\forall k \in \T_L^*, \quad \hat{f}(k) : = \sum\limits_{x \in \T_L} e^{- i k \cdot x} f(x).
\end{equation}
It follows from this definition that,
\begin{equation}\label{eq:InverseFourier}
\forall x \in \T_L, \quad f(x) = \frac{1}{|\T_L|} \sum\limits_{k \in \T_L^*} e^{i k \cdot x} \hat{f}(k).
\end{equation}
The next lemma, which will be proved in the appendix of this paper and which is a immediate consequence of (\ref{eq:Fourierdefinition}) and of (\ref{eq:InverseFourier}), allows us to explain the strategy of the proof. 
\begin{lemma}\label{lemma:relationmodus}
Define the Fourier mode $p : = (\pi, \pi, \ldots, \pi) \, \in  \, \T_L^*$.
We have that, for any $L \in \mathbb{N}_{>0}$, $\rho, N\in [0, \infty)$,
\begin{equation}\label{eq:fourierlemma}
\frac{2}{|\T_L|} \sum\limits_{x \in \T^o_L} \mathbb{G}_{L,  N, \rho}(x)
=  \mathbb{G}_{L,  N, \rho}(\boldsymbol{e}_1) \, - \, \frac{1}{|\T_L|} \, \sum\limits_{k \in \T_L^* \setminus \{o, p\}} e^{  i k \cdot \boldsymbol{e}_1}  \, \hat{\mathbb{G}}_{L,  N, \rho}(k).
\end{equation}
\end{lemma}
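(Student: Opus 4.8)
The plan is to prove Lemma \ref{lemma:relationmodus} as a direct consequence of Fourier inversion, paying careful attention to the role of the two special modes $o = (0,\ldots,0)$ and $p = (\pi,\ldots,\pi)$. First I would apply the inverse Fourier formula (\ref{eq:InverseFourier}) to the function $x \mapsto \mathbb{G}_{L,N,\rho}(x)$ at the point $x = \boldsymbol{e}_1$, writing
\begin{equation*}
\mathbb{G}_{L,N,\rho}(\boldsymbol{e}_1) = \frac{1}{|\T_L|} \sum_{k \in \T_L^*} e^{i k \cdot \boldsymbol{e}_1} \hat{\mathbb{G}}_{L,N,\rho}(k).
\end{equation*}
Then I would split off the two terms $k = o$ and $k = p$ from the sum, leaving the remaining sum over $\T_L^* \setminus \{o,p\}$ that appears on the right-hand side of (\ref{eq:fourierlemma}). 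The key computational input is to identify the two extracted terms: for $k = o$ we have $e^{i o \cdot \boldsymbol{e}_1} = 1$ and $\hat{\mathbb{G}}_{L,N,\rho}(o) = \sum_{x \in \T_L} \mathbb{G}_{L,N,\rho}(x)$; for $k = p$ we have $e^{i p \cdot \boldsymbol{e}_1} = e^{i\pi} = -1$ and $\hat{\mathbb{G}}_{L,N,\rho}(p) = \sum_{x \in \T_L} e^{-i p \cdot x}\, \mathbb{G}_{L,N,\rho}(x) = \sum_{x \in \T_L} (-1)^{d(o,x)} \mathbb{G}_{L,N,\rho}(x)$, using that $p \cdot x = \pi(x_1 + \cdots + x_d)$ and $x_1 + \cdots + x_d \equiv d(o,x) \pmod 2$ on the torus.

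The next step is to combine these two extracted terms. Their sum is
\begin{equation*}
\frac{1}{|\T_L|}\Big( \sum_{x \in \T_L} \mathbb{G}_{L,N,\rho}(x) - \sum_{x \in \T_L} (-1)^{d(o,x)} \mathbb{G}_{L,N,\rho}(x) \Big) = \frac{2}{|\T_L|} \sum_{x \in \T_L^o} \mathbb{G}_{L,N,\rho}(x),
\end{equation*}
since the terms with $x \in \T_L^e$ cancel (they have $(-1)^{d(o,x)} = 1$) while the terms with $x \in \T_L^o$ add (they have $(-1)^{d(o,x)} = -1$, so subtracting $-1$ gives a factor $2$). Here one uses the partition $\T_L = \T_L^e \sqcup \T_L^o$ from (\ref{eq:oddevensublattice}), valid because $L$ is even — I would note that for odd $L$ the decomposition breaks down, but the lemma as stated is only needed (and the partition into even/odd sublattices only makes sense) for even $L$; alternatively, since the statement claims validity for all $L \in \mathbb{N}_{>0}$, one checks that for odd $L$ the mode $p$ is not an element of $\T_L^*$ unless... — actually $p \in \T_L^*$ requires $\pi = \frac{2\pi}{L} n$, i.e. $n = L/2 \in \mathbb{Z}$, so $p \in \T_L^*$ only when $L$ is even; for odd $L$ the claim should be interpreted with the convention that the sum over $\T_L^*\setminus\{o,p\}$ is just over $\T_L^*\setminus\{o\}$ and $\T_L^o$ is defined directly by (\ref{eq:oddevensublattice}), and the same cancellation argument goes through with only the $k=o$ term extracted. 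I would include a brief remark clarifying this. Rearranging the identity to isolate the Cesàro-type sum on the left yields exactly (\ref{eq:fourierlemma}).

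Substituting back, we obtain
\begin{equation*}
\mathbb{G}_{L,N,\rho}(\boldsymbol{e}_1) = \frac{2}{|\T_L|} \sum_{x \in \T_L^o} \mathbb{G}_{L,N,\rho}(x) + \frac{1}{|\T_L|} \sum_{k \in \T_L^* \setminus \{o,p\}} e^{i k \cdot \boldsymbol{e}_1} \hat{\mathbb{G}}_{L,N,\rho}(k),
\end{equation*}
which is a trivial rearrangement of the claimed (\ref{eq:fourierlemma}). I do not expect any genuine obstacle here: the proof is essentially bookkeeping with Fourier inversion, and the only point requiring a moment's care is the sign computation $e^{i p \cdot \boldsymbol{e}_1} = -1$ together with the identification $(-1)^{d(o,x)}$ via the parity of the coordinate sum, and the observation that this is precisely the projection onto the odd sublattice. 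The reason this identity is useful downstream — and worth isolating as a lemma — is that it expresses the target Cesàro average (the left-hand side of (\ref{eq:permpositivityaverage})) in terms of the diagonal-axis value $\mathbb{G}_{L,N,\rho}(\boldsymbol{e}_1)$, which can be controlled from below by an explicit constant, minus a Fourier sum over non-zero modes, which is exactly what the infrared bound derived from Theorem \ref{theo:keyinequality} controls from above.
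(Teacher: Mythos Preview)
Your proposal is correct and follows essentially the same route as the paper's proof: apply Fourier inversion at $\boldsymbol{e}_1$, peel off the $k=o$ and $k=p$ modes, and use $e^{ip\cdot\boldsymbol{e}_1}=-1$ together with $e^{-ip\cdot x}=(-1)^{d(o,x)}$ to turn the combined contribution into twice the odd-sublattice sum. Your parenthetical remark that $p\in\T_L^*$ only when $L$ is even is a valid observation (the paper glosses over it), but since the lemma is only ever applied for $L\in 2\mathbb{N}$ this causes no trouble.
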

The  goal is to bound away from zero uniformly in $L$ the quantity in the left-hand side of  (\ref{eq:fourierlemma}), obtaining  (\ref{eq:permpositivityaverage}). This quantity corresponds to the difference between the $(0,\ldots, 0)$ and the $(\pi, \ldots, \pi)$ Fourier mode of the two point function (note that the sum involves only odd vertices).
When $\rho = 0$, the first term in the right-hand side of (\ref{eq:fourierlemma})  satisfies
\begin{equation}\label{eq:boundfirstterm}
\mathbb{G}_{L,  N, 0}(\boldsymbol{e}_1) = \frac{1}{d N}
\end{equation}
for any even $L$, as we prove in Section \ref{sect:proofs} (and it is easy to show). 
Section  \ref{sect:InfraredBound} is devoted to showing that,
uniformly in $L$,
\begin{equation}\label{eq:boundsecondterm}
\limsup_{L \rightarrow \infty}  \frac{1}{|\T_L|} \, \sum\limits_{k \in \T_L^* \setminus \{o, p\}} e^{  i k \cdot \boldsymbol{e}_1}  \, \hat{\mathbb{G}}_{L,  N, 0}(k) \leq \frac{ r_d}{4d}.
\end{equation}
This is the point where we use the  Key Inequality
under specific choices of the vector $\boldsymbol{v}$, 
and the symmetry properties of the Fourier  even and odd  two-point functions  (which will be defined below) and we make use of the assumption $\rho = 0$ in a crucial way.
By replacing  (\ref{eq:boundfirstterm}) and (\ref{eq:boundsecondterm}) in (\ref{eq:fourierlemma}) we obtain  the desired uniform lower bound for the Ces\`aro sum, (\ref{eq:permpositivityaverage}).
Fortunately for us the numerical value of the quantity $r_d$, which was computed exactly and rigorously by Watson \cite{Watson} when $d = 3$, is small enough to imply by monotonicity non-trivial results for any $d \geq 3$. Indeed, contrary to the spin systems case, where a factor $\frac{1}{\beta}$
in the right-hand side of `the analogous of' (\ref{eq:boundsecondterm})
 makes the  bound  better and better as one takes  the inverse temperature parameter $\beta$ (which appears in the definition of such spin systems) larger, in our case the bound does not improve  arbitrarily by taking $\rho$ arbitrarily close to zero (and there is no reason to expect it should be the case), hence there is no way  to ensure  a priori that the method will lead to non-trivial results until one derives the optimal constant $\frac{r_d}{4d}$  and proves that such a constant is strictly less  than   (\ref{eq:boundfirstterm}) for a non-empty range of strictly positive integers $N$ in any dimension $d \geq 3$. 
 We refer to Remark \ref{remark:differentcase} for further general comments on this part of the proof and for a comparison with the classical case of spin systems with continuous symmetry.

\subsection{From lattice permutations to dimers: proof of Theorem \ref{theo:theo1} given Theorem \ref{theo:theo3}}
We now prove (\ref{eq:relationdimerpermutationstat}) formally. 
This will be the last time the dimer model appears in this paper,
since our main result on the dimer model follows from its representation as a `fully-packed'  lattice permutation model in the special case $N = 2$, and the next sections are devoted to the study of lattice permutations. In such a special case, lattice permutations can be viewed as a different formulation of the \textit{double dimer model} \cite{Dubedat2, Kenyon5}).
 Here, by `fully-packed' $\pi$ we mean that  $\pi$ is such that  $\mathcal{M}(\pi) = 0$.

\begin{proof}[{{\textit{Proof of (\ref{eq:relationdimerpermutationstat})}.}}]
We claim that there exist two bijections,
\begin{align*}
\Pi^{1}  : \mathcal{D}(\emptyset) \times \mathcal{D}(\{o,z\})  & \mapsto   \{  \pi \in \Omega_{o,z} \, \, : \, \,\mathcal{M}(\pi) = 0 \} \\
\Pi^{2}  : \mathcal{D}(\emptyset) \times \mathcal{D}(\emptyset)  & \mapsto   \{  \pi \in \Omega^\ell \, \, : \, \,\mathcal{M}(\pi) = 0  \}.
\end{align*}
Indeed, note the following: If we superimpose two dimer covers, $\eta^1 \in \mathcal{D}(\emptyset)$, $\eta^2 \in \mathcal{D}(\{o,z\})$, which we call blue and red respectively,
we obtain a system of mutually-disjoint self-avoiding loops, double dimers and a self-avoiding walk from $o$ to $z$, like in Figure \ref{Fig:dimerssuperposition}, where the double dimer corresponds to the superposition of a blue and a red dimer on the same edge, while the loops and  walk consist of an alternation of blue and red dimers.
Note also that any loop might appear with two different alternations of blue and red dimers. Indeed, given a pair $(\eta_1, \eta_2)$ and 
some arbitrary loops of such a pair, one might obtain a new pair $(\eta^\prime_1, \eta^\prime_2)$ which is identical to $(\eta_1, \eta_2)$, except for the fact that the selected loops appear with the opposite alternation of blue and red dimers.
Thus, we can associate to $(\eta_1, \eta_2)$ an element $\pi \in \Omega_{o,z}$ which is such that $\pi$ has a double edge at $\{x,y\}$ if both $\eta_1$ and $\eta_2$ have  dimer at $\{x,y\}$ and every loop of 
$\pi$ corresponds to a loop of $(\eta_1, \eta_2)$ and fix a convention for which alternation of red and blue dimers of the loops in 
$(\eta_1, \eta_2)$ corresponds to which of the two  possible orientations of the loops in $\pi$. This defines the bijection $\Pi^1$. The bijection $\Pi^2$ is defined analogously (the only difference is that we have no walk starting at $o$ and ending at $z$).
Since we have two bijections, we deduce that
\begin{equation}\label{eq:relationdimerpermutation}
\forall z\in \T_L \quad  \mathbb{G}_{L, 2, 0}(o,z) =
 \frac{|\{ \pi \in \Omega_{o,z} \, : \, \mathcal{M}(\pi) = 0\}|}{  |\{ \pi \in \Omega^\ell \, : \, \mathcal{M}(\pi) = 0\}|} = 
 \frac{|\mathcal{D}(\{o,z\})| \, \, |\mathcal{D}(\emptyset)|}{ | \mathcal{D}(\emptyset)|^2} = \Xi_L(z),
\end{equation}
This leads to our claim.
\end{proof}

\begin{proof}[{\textit{Proof of Theorem \ref{theo:theo1}} given Theorem \ref{theo:theo3}}]
Apply Theorem  \ref{theo:theo3} when $N = 2$. By (\ref{eq:relationdimerpermutationstat}), we deduce Theorem \ref{theo:theo1}.
\end{proof}

\section*{Notation}

\begin{center}
	\begin{tabular}{ l l }
	
	   $ \boldsymbol{e}_i$  & cartesian vector, with $i \in \{1, \ldots d\}$ or   $i \in \{1, \ldots d+1\}$ \\ 

$\mathcal{G} = ( \mathcal{V}, \mathcal{E})$ &  an undirected, simple, finite  graph \\

$e \in \mathcal{E}$ or $\{x,y\} \in \mathcal{E}$ & undirected edges \\

$(x,y) \in \mathcal{E}$ &  edge directed from $x$ to $y$ \\

 $ ( \T_L, \E_L)$ & graph corresponding to the torus $\mathbb{Z}^d / L \mathbb{Z}^d$   \\
 
  $ ( \mathcal{T}_L, \mathcal{E}_L)$ & extended torus, with original and virtual vertices \\
  
      $ \T_L^{ (2)} \subset  \mathcal{T}_L$ & set of virtual vertices  \\
      
         $ \T_L^{*}$  & Fourier dual torus   \\

  $ o \in \T_L$, $o \in \mathcal{T}_L$  or $o \in \T_L^*$ & origin \\

    $x \sim y$ & pair of vertices in $\T_L$ which are connected by an edge in $ \E_L$    \\

 $N \in \mathbb{N}_{>0}, \, \, \lambda, \rho \in \mathbb{R}_{\geq 0}$& respectively number of colours, edge-parameter, and monomer activity  \\

$U = (U_x)_{x \in \mathcal{V}}$ &  {weight function} \\

$m = (m_e)_{e \in \mathcal{E}}$ & link cardinalities, with  $m_e$ corresponding  number of links \textit{on} the edge $e$  \\

$c = (c_e)_{e \in \mathcal{E}}$ & link colourings, with  $c_e : \{1, \ldots, m_e\} \mapsto \{1, \ldots, N\} $ \\ 

$\gamma = (\gamma_x)_{x \in \mathcal{V}}$ & pairings, with $\gamma_x$ pairing the links touching the vertex $x$ \\



$\mathcal{W}_{\mathcal{G}}$ &  the set of  configurations in $\mathcal{G}$, with $w= (m,c, \gamma) \in \mathcal{W}_G$ \\

$n_x$ & number of pairings at $x$ \\




$u_x$ & number of links touching  $x$ which are unpaired at $x$ \\


$\mathbb{Z}^\ell_{L,  N, \rho  }$ 
 & loop partition function\\

$\mathbb{Y}^\ell_{L,  N, \lambda  }$ 
 & loop partition function times an appropriate constant \\
 
 $\mathbb{Z}_{L,  N, \rho  }(x,y)$ 
 & directed partition function \\

$\mathbb{Y}_{L,  N, \lambda  }(x,y)$ 
 & directed partition function times an appropriate constant \\
 
 $\mathbb{G}_{L,  N, \rho  }(x,y)$ 
 & two-point function \\
 
  $\mathbb{G}_{L,  N, \rho  }(x)$ 
 & equivalent to $\mathbb{G}_{L,  N, \rho  }(o,x)$   \\
 
  $\hat{\mathbb{G}}_{L,  N, \rho  }(k)$ 
 & Fourier transform of  $\mathbb{G}_{L,  N, \rho  }(x)$  \\


$\boldsymbol{v} = (v_x)_{x \in \T_L}$ &  real-valued vector, with  coordinates associated to $\T_L$ \\

$\boldsymbol{h} = (h_x)_{x \in \mathcal{T}_L}$ & vector of real numbers, with  coordinates associated to  $\mathcal{T}_L$ \\

  $\mathcal{Z}_{L, N, \lambda, U}(\boldsymbol{h})$ 
 & partition function with  links unpaired at $x$ receiving a multiplicative weight $h_x$    \\
 
  $\mathcal{Z}^{(2)}_{L, N, \lambda, U}(\boldsymbol{h})$ 
 & second term of the polynomial expansion   \\

 
\end{tabular}

\end{center}

\section{Derivation of the Key Inequality}
\label{sect:therandom}
This section is devoted to the proof of  Theorem \ref{theo:keyinequality}.
Before starting, it will be convenient introducing a different parametrisation of the partition functions. More precisely,  let  $x, y \in \T_L$ be arbitrary vertices, 
for any $\pi \in \Omega^\ell$ or  $\pi \in \Omega_{x,y}$, define $\mathcal{H}(\pi) : = \big |  E_\pi|$,  the number of directed edges in the graph  $\pi = (\T_L, E_\pi)$.
Define the \textit{edge-parameter} $\lambda \geq 0$ and define the partition functions parametrised by $\lambda$, 
\begin{equation}\label{eq:partitionlambda}
\mathbb{Y}_{L,  N, \lambda}^{\ell} := \sum\limits_{\pi \in \Omega^\ell} \lambda^{\mathcal{H}(\pi)} (\frac{N}{2})^{  \mathcal{L}(\pi) }, \quad \quad  \quad \mathbb{Y}_{L, N, \lambda}(x,y) := \sum\limits_{\pi \in \Omega_{x,y}} \lambda^{\mathcal{H}(\pi)} (\frac{N}{2})^{  \mathcal{L}(\pi) },
\end{equation}
which for any  $\lambda \in (0, \infty)$ and $L \in 2 \mathbb{N}$ are related to the partition functions (\ref{eq:partitionloop}) and
(\ref{eq:partitiondirected})
by 
$$
\mathbb{Y}_{L,  N, \lambda}^{\ell}  = 
\lambda^{ |\T_L|  } \, \, 
\mathbb{Z}_{L,  N, \frac{1}{\lambda}}^{\ell},  
\quad   \quad 
\mathbb{Y}_{L, N, \lambda}(x,y) = 
\lambda^{ |\T_L| -1 } \,  \, \mathbb{Z}_{L, N,\frac{1}{\lambda}}(x,y),
$$
(for this, we use that $\mathcal{H}(\pi) + \mathcal{M}(\pi) = |\T_L|$ if $\pi \in 
\Omega^\ell$  and that $\mathcal{H}(\pi) + \mathcal{M}(\pi) = |\T_L| - 1$
if $\pi \in \Omega$)
 and thus satisfy for any $\lambda \in (0, \infty)$,
\begin{equation}\label{eq:twopointlambdarelation}
\mathbb{G}_{L, N, \frac{1}{\lambda}}(x,y)   = 
 \frac{\lambda \,  \mathbb{Y}_{ L, N, \lambda}(x,y)}{\mathbb{Y}_{L, N, \lambda}^\ell}.
\end{equation}
The edge parameter $\lambda$ will play a similar role to the inverse temperature in spin systems.

\subsection{The random path model}
\label{sect:definitionsrandompath}
In this section we introduce the random path model in an arbitrary graph (this section is similar to Section 2.1 in \cite{Lees}).
Let $\mathcal{G} = ( \mathcal{V}, \mathcal{E})$ be an undirected, simple, finite graph, and assume that $N \in \mathbb{N}_{>0}$. We  refer to  $N$ as the \textit{{number of colours}.}
A  realisation of the random path model can be viewed as a collection of undirected paths (which might be closed or open).
\begin{figure}
\includegraphics[scale=0.40]{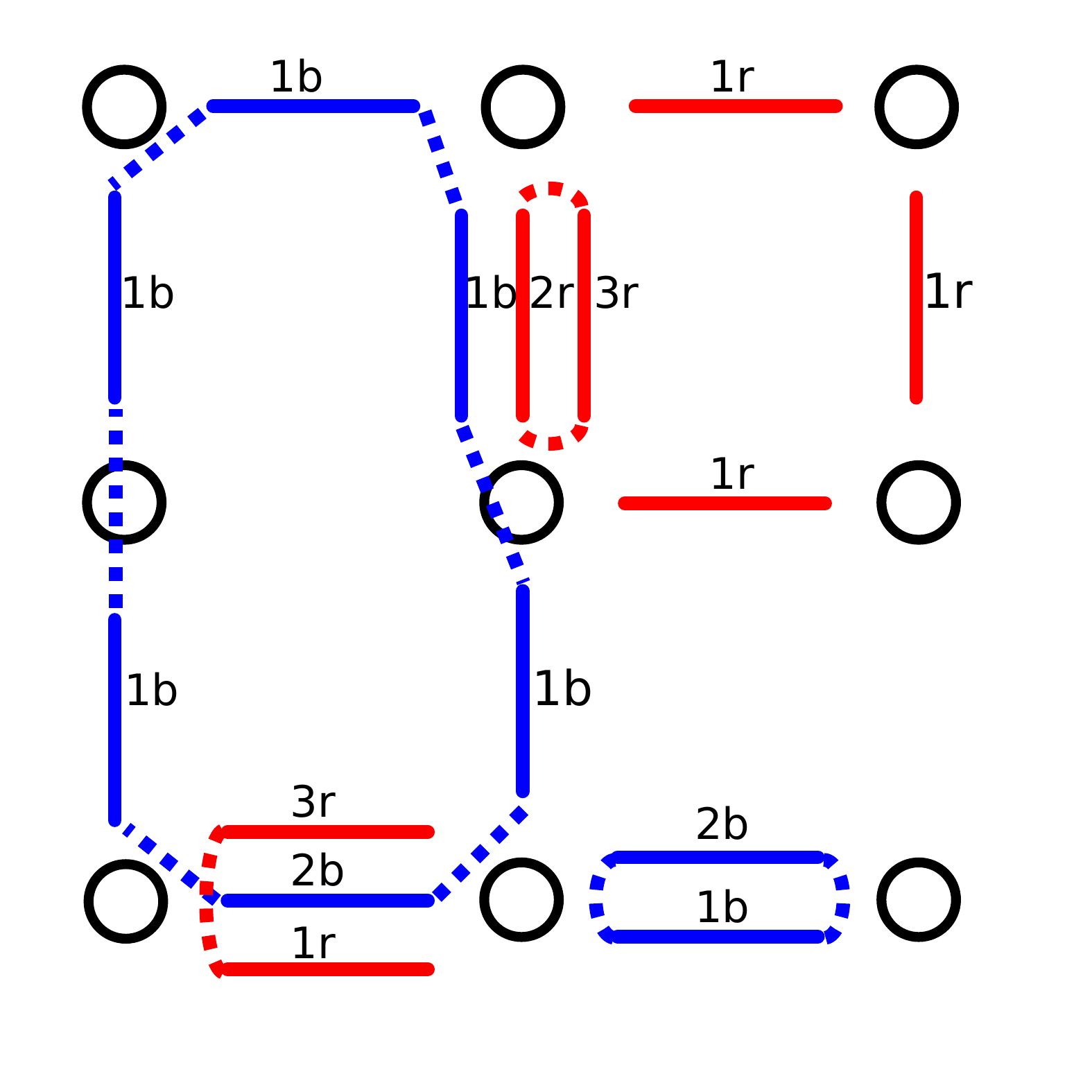}
\centering
\caption{
A configuration $w = (m, c, \gamma) \in \mathcal{W}_{\mathcal{G}}$, where $\mathcal{G}$ corresponds to the graph  $\{1, 2,  3\} \times \{1,2,3\}$ with edges connecting nearest neighbours and the lowest leftmost vertex corresponds to $(1,1)$.
On every edge $e$, the links are ordered and receive a label from $1$ to $m_e$.
In the figure, the numbers 1, 2, ...  are used for the identification of the links
and the letters $b$ and $r$ are used for the colours which are asigned to the links by $c$ (we assume that $N=2$ and that each link might be either blue or red).  Paired links are connected by a dotted line.
For example, the first link on the edge connecting the vertices $(1,1)$, $(2,1)$
is coloured by red and it is paired at $(1,1)$ with the third link on the same edge and it is unpaired at $(2,1)$. Moreover, both links touching the vertex $(3,3)$ are red and they are unpaired at $(3,3)$.
Finally, no link is on the edge which connects the vertices $(1,2)$ and $(2,2)$. 
}
\label{Fig:pairingexample}
\end{figure}

\paragraph{Links, colourings, pairings.}
To define a realisation we need to introduce {links}, {colourings} and pairings.
We represent a \textit{link configuration} by
 $m \in  \mathcal{M}_{\mathcal{G}} := \mathbb{N}^{\mathcal{E}}$. More specifically
$$m = \big ( m_e \big )_{e \in \mathcal{E}},$$
where $m_{e}\in\N$ represents the number of links \textit{on the edge} $e$.
Intuitively, a link represents a `visit' at the edge from a path.
The links are ordered and receive a label between $1$ and $m_e$.
See also Figure \ref{Fig:pairingexample}.
No constraint concerning the parity of $m_e$ is introduced.
If a link is on the edge $e = \{x,y\}$, then we  say that \textit{it touches} $x$ and $y$.

Given a link configuration $m \in \mathcal{M}_{\mathcal{G}}$, a \textit{colouring} $c \in \mathcal{C}_{\mathcal{G}}(m) := \{1, \ldots, N\}^{m}$ is a realisation which assigns an integer in $\{1, \ldots, N\}$ to each link, which will be called its \textit{colour}.
More precisely, 
$$
c = (c_e)_{e \in \mathcal{E}},
$$
is such that $c_e \in \{1, \ldots, N\}^{m_e}$, where $c_e(p) \in \{1, \ldots, N\}$ is the colour of the $p$-th link which on the edge $e \in \mathcal{E}$, with $p \in \{1, \ldots, m_e\}$. 
See Figure \ref{Fig:pairingexample} for an example, where  $N=2$ and the colors are represented by a label in $\{r,b\}$.

Given a link configuration, $m \in \mathcal{M}_{\mathcal{G}}$, and a colouring $c \in \mathcal{C}_{\mathcal{G}}(m)$, a pairing $ \gamma = (\gamma_x)_{x \in \mathcal{V}}$ for $m$ and $c$ 
pairs links touching $x$  in such a way that,  if two links are  paired, then they have the same colour. A link touching $x$ can be paired to at most another link touching $x$, and
it is not necessarily the case that all links touching $x$ are paired to another link at $x$.
If a link touching $x$ is  paired at $x$ to no other link touching $x$, then we say that the link is \textit{unpaired at $x$.}
Given two links, if there exists a vertex $x$ such that such links are \textit{paired at $x$}, then we say that such links are \textit{paired}. 
It follows from these definitions that a link can be paired to at most two other links. We remark that, by definition, a link cannot be paired to itself. We denote by $\mathcal{P}_{\mathcal{G}}(m, c)$ the set of all such pairings for $m\in\mathcal{M}_\Gcal$, $c \in \mathcal{C}_{\mathcal{G}}(m)$. 

A \textit{configuration}  of the random path model is an element $w = ( m, c, \gamma)$
such that $m \in \mathcal{M}_{\mathcal{G}}$,
$c \in \mathcal{C}_{\mathcal{G}}(m)$,  
$\gamma \in \mathcal{P}_{\mathcal{G}}(m, c)$.
We let $\mathcal{W}_{\mathcal{G}}$ be the set of  such configurations. 
It follows from these definitions that any $w \in \mathcal{W}_{\mathcal{G}}$ can be viewed as a collection of closed and open paths.
These will be  defined in Section \ref{sect:relation} formally, and will be divided into four classes:  $\ell$-loops, double links, $\ell$-walks, and segments.

For any $w = (m, c, \gamma) \in \mathcal{W}_\mathcal{G}$, we use the notation  $m_{e}(w)$ for the random variable corresponding to the number of links on the edge $e$, i.e, the element of the vector  $m = (  m_{\tilde e} )_{\tilde e \in \mathcal{E}}$ such that  $\tilde e = e$.
For any $x \in \mathcal{V}$, let $u_x : \mathcal{W}_{\mathcal{G}} \mapsto \mathbb{N}$ be the number of links touching $x$ which are unpaired at $x$.
Moreover, let $n_x : \mathcal{W}_{\mathcal{G}} \mapsto \mathbb{N}$ be the number of pairings at $x$, namely
\begin{equation}\label{eq:numberofpairings}
n_x(w) : = \frac{1}{2}  \sum\limits_{ \substack{ (y,z)  \in \mathcal{E} : \\ y =x  }} m_{ \{y,z\}}(w) \, \,
- \, \, \frac{u_x(w)}{2}.
\end{equation}
which corresponds to the number of pairings at $x$ (i.e, the  number of links touching $x$ and paired at $x$  to another link  divided by two).

\paragraph{Domains, restrictions, measure.}
We now introduce the notion of domain and restriction and, after that, we introduce reflections. Intuitively, a function with domain $D \subset \mathcal{V}$ is a function which depends only on how $w \in \mathcal{W}_{\mathcal{G}}$ looks in $D$ or in a subset of $D$. More precisely, the function might only  depend on how many links are emanated from the vertices of $D$, on the direction in which  they are emanated, on which colour they have and on the pairings on vertices in $D$.
A function $f : \mathcal{W}_{\mathcal{G}} \mapsto \mathbb{R}$ has \textit{domain} $D \subset \mathcal{V}$ if, for any
pair of configurations $w = (m, c, \gamma), w^{\prime} = (m^{\prime}, c^{\prime}, \gamma^{\prime}) \in \mathcal{W}_{\mathcal{G}}$ such that 
$$
\quad \forall e \in \mathcal{E} :  \,e\cap D\neq \emptyset, \quad \forall z \in D, \quad m_e = m_e^{\prime} \quad c_e = c_e^{\prime} \quad \gamma_z = \gamma_z^{\prime} 
$$
one has that 
$f(w) = f(w^{\prime})$.
Moreover, for any  $w=(m, c, \gamma) \in \mathcal{W}_{\mathcal{G}}$ define the \emph{restriction} of $w$ to $D \subset \mathcal{V}$, $w_D=(m_D, c_D, \gamma_D)$ with 
$c_D \in \mathcal{C}_{\mathcal{G}}(m_D)$,
$\gamma_D\in \mathcal{P}_{\mathcal{G}}(m_D, c_D)$, by 
  \vspace{-0.1cm}
 \begin{enumerate}[i)]
  \vspace{-0.1cm}
 \item $(m_D)^i_e = m^i_e$ for any edge $e \in \mathcal{E}$ which has at least one end-point in $D$ and $(m_D)^i_e =0$ otherwise,
  \vspace{-0.1cm}
  \item $(c_D)_e = c_e$ for any edge $e$ which has at least one end-point in $D$ and $(c_D)_e= \emptyset$ otherwise,
 \vspace{-0.1cm}
  \item $(\gamma_D)_x = \gamma_x$ for any $x \in D$, and for $x\in  \mathcal{V} \setminus D$ we set $(\gamma_D)_x$ as the pairing which leaves all links touching $x$ unpaired (if any).
   \vspace{-0.1cm}
\end{enumerate}

We now introduce a measure  on $\mathcal{W}_{\mathcal{G}}$.
\begin{definition}\label{def:measure}
Let $N \in \mathbb{N}_{>0}$, let  $U = \big (  U_x  \big )_{x \in \mathcal{V}}$ be a sequence of real-valued functions such that, for any $x \in \T_L$, $U_x$ has domain $\{x\}$.
We refer to $U$ as \textit{weight function.}
We  introduce the (non-normalised, possibly signed)  measure of the random path model on $\mathcal{W}_{\mathcal{G}}$, which depends on the  parameter $\lambda \in [0, \infty)$ and on the weight function $U$, 
\begin{equation}\label{eq:measure}
\forall w = (m, c,  \gamma ) \in \mathcal{W}_{\mathcal{G}}
\quad \quad 
\mu_{\mathcal{G},  N, \lambda, U}(w) := 
  \prod_{e \in \mathcal{E}} \Big (\frac{ \lambda^{m_e}}{m_e!} \Big ) \prod_{x \in \mathcal{V}} 
  \Big ( 
  U_x(w) 
  \Big )
\end{equation}
Given a function $f : \mathcal{W}_{\mathcal{G}} \rightarrow \mathbb{R}$, we represent its average by 
$
\mu_{ \mathcal{G}, N, \lambda, U} \big ( f \big ) =
\sum\limits_{w \in \mathcal{W}_{\mathcal{G}}} \mu_{\mathcal{G}, N, \lambda, U}(w) f(w).
$
\end{definition}
We  always assume that the choice of the weight function $U$ is such that the measure $\mu_{ N, \lambda, U}$ has finite mass. The role played by the normalisation factor $\frac{1}{m_e!}$ in (\ref{eq:measure}) will be explained at the beginning of Section \ref{sect:technicallemma}.

\subsection{Reflection positivity and virtual vertices}
\label{sect:RP}
In this section we introduce the extended torus, a graph which is embedded in $\mathbb{R}^{d+1}$ and contains the torus $(\T_L, \E_L)$, which is embedded in  $\mathbb{R}^d$, and the important notion of reflection positivity.
From now on we  consider the random path model on such a graph.

\paragraph{Extended torus, virtual  and original vertices.}
Recall that $(\T_L, \E_L)$ was defined as the graph corresponding to a d-dimensional torus with edges connecting nearest neighbour vertices.
We will now view $(\T_L, \E_L)$ as the sub-graph of a larger graph embedded in $\mathbb{R}^{d+1}$, which will be denoted by 
$(\mathcal{T}_L, \mathcal{E}_L)$ and  will be referred to as \textit{extended torus}. The extended torus is obtained from the $d$-dimensional torus  by duplicating the vertex-set and by adding an edge between every vertex in $\T_L$ and its copy.
More precisely,   we define the vertex set of the extended torus as,
$$
\mathcal{T}_L : =  \big
\{ \,  (x_1, \ldots, x_{d+1} )  \in \mathbb{Z}^{d+1} \, : \,  x_i \in (- \frac{L}{2} , \frac{L}{2}  ] \,  \mbox{ for every } i \in \{1, \ldots, d \}, \,  \mbox{ and }  \,  x_{d+1} \in \{1,2\} \,   \big   \},
$$
where
$\T_L  = \{   (x_1,  \ldots, x_{d+1} )\in \mathcal{T}_L \, \, : \, \, x_{d+1} = 1    \}
 \subset \mathcal{T}_L$,  and $\T_L^{(2)} : = \mathcal{T}_L \setminus \T_L$. 
Recall that  $\E_L$ is defined as the set of edges connecting pairs of nearest neighbour vertices and boundary vertices in $\T_L$ so that  the  $(\T_L, \E_L)$ can be identified with the d-dimensional torus and define the edge-set,
$$
\mathcal{E}_L := \, \, \, \E_L \cup   \big \{ \{x,y\} \, 
\subset \mathbb{Z}^{d+1} : \, \,  x \in \T_L,  \, y = x + (0, \ldots,0, 1) \,        \big \}.
$$
This defines the extended torus $(\mathcal{T}_L, \mathcal{E}_L)$.
We will refer to  the vertices in $\T_L \subset \mathcal{T}_L$ as \textit{original} and to the vertices in  $\T_L^{(2)} \subset \mathcal{T}_L$ as \textit{virtual}.
From now on, we take  $\mathcal{G} = (\mathcal{T}_L, \mathcal{E}_L)$,
for $L \in \mathbb{N}_{>0}$, 
 and we  omit the sub-script $\mathcal{G}$  in all the quantities which were defined above or replace it by $L$ when appropriate.
In this setting we will keep referring to $o$, corresponding to the vertex $(0, \ldots, 0) \in  \T_L \subset \mathcal{T}_L \subset \mathbb{Z}^{d+1}$, as the origin.
From now on the current section is an adaptation of \cite{Lees}[Section 3] to the extended torus.

\paragraph{Reflection through edges.}
Recall that the graph $(\mathcal{T}_L, \mathcal{E}_L)$ is embedded in $\mathbb{R}^{d+1}$.
We say that the plane $R$ is  through the  edges of $(\mathcal{T}_L, \mathcal{E}_L)$ if it is orthogonal to one of the cartesian vectors $\boldsymbol{e_i}$ for $i \in \{1, \ldots, d\}$ (and not $i = d+1$)
and it  intersects the midpoint of
$L^{d-1}$ edges of the graph $(\mathcal{T}_L,\Ecal_L)$,  i.e.
$R = \{z \in \mathbb{R}^{d+1} \, \, : \, \, z \cdot \boldsymbol{e_i} = u      \}$, for some $u$ such that  $u - 1/2 \in \mathbb{Z} \cap (-\frac{L}{2},\frac{L}{2}]$
and $i \in \{1, \ldots, d\}$.
See Figure \ref{Fig:exampledoubletorus} for an example. Given such 
a plane $R$, we denote by 
$\Theta : \mathcal{T}_L \rightarrow \mathcal{T}_L$  the reflection operator which reflects the vertices of  $\,\mathcal{T}_L$ with respect to $R$, i.e.
for any $x = (x_1, x_2, \ldots, x_{d+1}) \in \mathcal{T}_L$, 
\begin{equation}
\Theta(x)_k : = 
\begin{cases}
x_k & \mbox{ if } k \neq i, \\ 
2m - x_k   \mod  L & \mbox{ if } k = i.
\end{cases}
\end{equation}
Let $\mathcal{T}_L^+, \mathcal{T}_L^- \subset \mathcal{T}_L$ be the corresponding partition of the extended torus into two disjoint halves
such that $\Theta(\mathcal{T}_L^\pm) = \mathcal{T}_L^{\mp}$,
 as in Figure \ref{Fig:exampledoubletorus}. 
Let $\Ecal^{+}_L, \Ecal^{-}_L \subset \Ecal_L$, be the set of edges $\{x,y\}$ with at least one of $x,y$ in $\mathcal{T}_L^+$ respectively $\mathcal{T}_L^-$. Moreover, let 
$\Ecal_L^R  := \Ecal^{+}_L \cap \Ecal^{-}_L$. Note that this set contains $2 L^{d-1}$ edges, half of them intersecting the plane $R$, and all of them  belonging to $\E_L$.
Further, let $\Theta  : \mathcal{W} \rightarrow \mathcal{W}$ denote the reflection operator reflecting the configuration $w=(m, c, \gamma)$ with respect to $R$ (we commit an abuse of notation by using the same letter). More precisely we define $\Theta w=(\Theta m, \Theta c, \Theta \gamma)$ where $(\Theta m)_{\{x,y\}}=m_{\{\Theta x,\Theta y\}}$, 
$(\Theta c)_{\{x,y\}}=c_{\{\Theta x,\Theta y\}}$, 
 $(\Theta \gamma)_x=\gamma_{\Theta x}$.
Given a function $f :\mathcal{W}\to \mathbb{R}$, we also use the letter $\Theta$ to denote the reflection operator $\Theta$ which acts on $f$ as $\Theta f(w) : = f(\Theta w)$.
We denote by $\Acal^\pm$ the set of functions with domain $\mathcal{T}_L^\pm$ and denote by $\mathcal{W}^\pm$ the set of configurations $w\in\mathcal{W}$ that are obtained as a restriction of some $w^\prime \in \mathcal{W}$ to $\mathcal{T}_L^\pm$.

We remark  that, although the graph $(\mathcal{T}_L, \mathcal{E}_L)$ is embedded in $\mathbb{R}^{d+1}$, we will only consider reflections with respect to reflection planes which are orthogonal to one of the cartesian vectors $\boldsymbol{e}_i$ for $i \in \{1, \ldots, d\}$ (and not $i=d+1$).

\begin{definition}\label{def:invariantunderreflection}
The weight function $U = (U_x)_{x \in \mathcal{T}_L}$,
which was defined in Definition \ref{def:measure},
 is \emph{invariant under reflections} if for any reflection plane $R$ through edges (which is orthogonal to one of the cartesian vectors 
 $\boldsymbol{e}_i$ for $i \in \{1, \ldots d\}$), it holds that,
$$
\forall x \in \mathcal{T}_L \quad \quad \Theta (U_x) = U_{ \Theta(x) },
$$
where   $\Theta$ is the reflection operator associated to the reflection plane $R$.
\end{definition}


The next  proposition introduces an important tool. The proposition states that the random path model with weight function $U$ satisfying the assumptions in Definition \ref{def:measure} and
which is invariant under reflections, as defined in Definition \ref{def:invariantunderreflection}, is reflection positive.
\begin{theorem}[\textbf{Reflection positivity}]\label{theo:RP1}
Consider the torus $(\mathcal{T}_{L},\Ecal_{L})$ for $L\in2\N$. Let $R$ be a reflection plane through edges, which is orthogonal to one of the cartesian vectors $\boldsymbol{e}_i$, $i \in \{1, \ldots, d\}$, let $\Theta$ be the corresponding reflection operator. Consider the random path model with  $N \in \mathbb{N}_{>0}$,  $\lambda \in \mathbb{R}_{>0}$, and weight function $U$  invariant under reflections.
For any pair of functions
$f, g \in \mathcal{A}^+$, we have that,
\begin{enumerate}
\item[(1)] $\mu_{L,N, \lambda, U} (f\Theta g)=\mu_{L,N, \lambda, U} (g \Theta f)$,
\item[(2)] $\mu_{L,N, \lambda, U} (f\Theta f)\geq 0$.
\end{enumerate}
From this we obtain that,
\begin{equation}\label{eq:RPCS}
\mu_{L,N, \lambda, U} \big (   f \,  \Theta g  \big )
\leq 
\mu_{L,N, \lambda, U} \big (   f  \,  \Theta f  \big )^{\frac{1}{2}}
\, \, 
\mu_{L,N, \lambda, U} \big (   g \,  \Theta g  \big )^{\frac{1}{2}}.
\end{equation}
\end{theorem}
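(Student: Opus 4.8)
The statement to prove is Theorem \ref{theo:RP1}, the reflection positivity of the random path model on the extended torus. Let me think about how I'd approach this.

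The plan is to follow the standard structure of reflection positivity proofs for statistical mechanics measures, as pioneered in \cite{Frohlich} and adapted to random loop models in \cite{Lees}, but now accommodating the extended torus with its virtual vertices. The key observation is that the measure $\mu_{L,N,\lambda,U}$ factorizes nicely across the reflection plane $R$: writing a configuration $w$ in terms of its restrictions $w_{\mathcal{T}_L^+}$ and $w_{\mathcal{T}_L^-}$ to the two halves plus the data on the reflection edges $\mathcal{E}_L^R$, the product $\prod_{e} \lambda^{m_e}/m_e!$ splits as a product over $\mathcal{E}_L^+ \setminus \mathcal{E}_L^R$, a product over $\mathcal{E}_L^- \setminus \mathcal{E}_L^R$, and a product over the reflection edges; while $\prod_x U_x(w)$ splits over $\mathcal{T}_L^+$ and $\mathcal{T}_L^-$ because each $U_x$ has domain $\{x\}$. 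Crucially, since $R$ is orthogonal to a cartesian direction $\boldsymbol{e}_i$ with $i \neq d+1$, the reflection edges all lie in $\E_L$ (not the virtual edges), so the virtual vertices are distributed symmetrically between the two halves and cause no trouble.

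First I would set up the factorization precisely. For each reflection edge $e = \{x, \Theta x\} \in \mathcal{E}_L^R$, I introduce a "boundary" variable recording the number of links $m_e$, their colouring $c_e$, and — this is the subtle point for the random path model — the way these links are "split" by the pairings at the two endpoints. Then I would express $\mu_{L,N,\lambda,U}(f \Theta g)$ as a sum over these boundary variables of a product of the form $\big(\sum_{\text{config in }\mathcal{T}_L^+ \text{ compatible}} \cdots f \cdots\big)\big(\sum_{\text{config in }\mathcal{T}_L^- \text{ compatible}} \cdots g \cdots\big)$, where the weight $\lambda^{m_e}/m_e!$ on each reflection edge gets distributed using the binomial/multinomial identity — this is exactly where the $1/m_e!$ normalization (promised to be explained at the start of Section \ref{sect:technicallemma}) earns its keep, since it lets one write $\lambda^{m_e}/m_e! = \sum_{a+b=m_e}(\lambda^{1/2}{}^{a}/a!)(\lambda^{1/2}{}^{b}/b!)$ after distributing links to the two sides. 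The upshot should be a representation $\mu_{L,N,\lambda,U}(f\Theta g) = \sum_\alpha c_\alpha \, F(\alpha)\, \overline{G(\alpha)}$ (with $c_\alpha \geq 0$) where $F$ depends on $f$ and the $+$ side and $G$ depends on $g$ and the $+$ side, with the reflection $\Theta$ turning the $-$-side sum for $\Theta g$ into the complex/real conjugate expression in the $+$-side language. Symmetry (1) is then immediate from the symmetry of this bilinear form in $F,G$, and positivity (2) follows by taking $f = g$: $\mu_{L,N,\lambda,U}(f \Theta f) = \sum_\alpha c_\alpha |F(\alpha)|^2 \geq 0$.

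Then (\ref{eq:RPCS}) is just the Cauchy–Schwarz inequality for the positive semi-definite bilinear form $(f,g) \mapsto \mu_{L,N,\lambda,U}(f \Theta g)$, which is legitimate given (1) and (2) by the standard argument (consider $\mu(( f - t\Theta g) \Theta(f - t\Theta g)) \geq 0$ using $\Theta^2 = \mathrm{id}$ and (1), and optimize over $t$; one handles the degenerate case where $\mu(g\Theta g) = 0$ separately).

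The main obstacle I anticipate is bookkeeping the pairing structure across the reflection plane. Unlike a spin system where the "boundary data" is just the spin values on $R$, here a link sitting on a reflection edge is paired (or not) at each of its two endpoints, one of which lies in $\mathcal{T}_L^+$ and the other in $\mathcal{T}_L^-$; so the boundary variable must record enough to glue the two halves consistently, and one must check that the colouring constraint (paired links share a colour) and the counting of $n_x, u_x$ split correctly so that no factor depending jointly on both sides survives. I expect this to be handled exactly as in \cite{Lees}[Section 3], essentially verbatim, since the only new feature — the virtual vertices and edges — sits entirely within one side or the other and is untouched by the reflection; I would state this explicitly and refer to \cite{Lees} for the combinatorial details rather than reproducing them.
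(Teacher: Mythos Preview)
Your overall architecture matches the paper: factorize the measure across $R$ by conditioning on boundary data, show $\mu(f\Theta f)$ is a sum of non-negative terms, and deduce (\ref{eq:RPCS}) by Cauchy--Schwarz for the resulting positive semi-definite symmetric form. Two of your intermediate steps, however, are off.

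First, the identity you propose, $\lambda^{m_e}/m_e! = \sum_{a+b=m_e}(\sqrt\lambda)^{a}/a!\,(\sqrt\lambda)^{b}/b!$, is false (the right-hand side equals $(2\sqrt\lambda)^{m_e}/m_e!$), and in any case no splitting of links is needed: a link on a reflection edge is not assigned to one side or the other, it touches both. The paper handles the reflection-edge weight by deliberate double-counting: both $\prod_{e\in\mathcal E_L^+}$ and $\prod_{e\in\mathcal E_L^-}$ include $\mathcal E_L^R$, so one inserts the compensating factor $\prod_{e\in\mathcal E_L^R} m_e!/\lambda^{m_e}$ and then factorizes directly.

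Second, and this dissolves what you call your main obstacle: the boundary data carries \emph{no} pairing information. The pairing $\gamma_x$ has domain $\{x\}$, so for $x\in\mathcal T_L^+$ it belongs entirely to $w_{\mathcal T_L^+}$ and for $\Theta x\in\mathcal T_L^-$ entirely to $w_{\mathcal T_L^-}$; a link on $\{x,\Theta x\}$ may be paired at $x$ completely independently of what happens at $\Theta x$, and there is no cross-plane consistency constraint. The paper's boundary variable $w'\in\mathcal W^R$ records only $(m_e,c_e)_{e\in\mathcal E_L^R}$ with all links left unpaired; given such $w'$, any pair $(w_1,w_2)\in\mathcal W^+\times\mathcal W^-$ with the same projection reconstructs a unique $w$ (Remark \ref{remark:observationidentif}), so the sum over $w$ with $P_R(w)=w'$ factorizes into a product of two sums. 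By the $\Theta$-symmetry of the torus and the reflection-invariance of $U$, the $-$-side factor (involving $\Theta f$) equals the $+$-side factor, so $\mu(f;w')$ is a non-negative prefactor times a square.

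A minor point: in your Cauchy--Schwarz sketch you should expand $\mu\big((f+tg)\,\Theta(f+tg)\big)\geq 0$ with $f+tg\in\mathcal A^+$, not $(f-t\Theta g)$, since $\Theta g\notin\mathcal A^+$ and (2) would not apply.
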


\begin{proof}[\textbf{Proof of Theorem \ref{theo:RP1}}]
This proof is similar to the proof of Proposition 3.2 in \cite{Lees}, the difference is that here we deal with an extended torus in place of the graph $(\T_L, \E_L)$. 
To begin we introduce the notion of \textit{projection}.
We denote by $\mathcal{W}^R$ the set of  configurations $w = (m, c, \gamma)$ 
such that  $m_e=0$ whenever $e\notin \Ecal_L^R$ and, for all $x\in\mathcal{T}_L$,
$\gamma_x$ leaves all links touching $x$ unpaired.
We also denote by  $P_R:\Wcal \to \Wcal^R$ the projection such that, for any $w=(m,c, \gamma)\in\Wcal$, 
$P_R(w) = (m^R, c^R, \gamma^R)$ is defined as the  configuration such that  $m^R_e = \mathbbm{1}_{\{e \in \mathcal{E}_L^R \}} m_e$ and $c^R_e =   c_e$ if  $e \in \mathcal{E}_L^R$ and $c_e^R = \emptyset $ otherwise, 
and all links are unpaired at every vertex.
The following remark will be useful.
\begin{remark}\label{remark:observationidentif}
Recall the definition of restriction which was provided in Section \ref{sect:definitionsrandompath}. Given a triplet of configurations $w^\prime \in \mathcal{W}^R$, 
 $w_1 \in \mathcal{W}^{+}$,  $w_2 \in \mathcal{W}^{-}$
such that  $P_R(w_1) = P_R(w_2) = w^\prime$,
there exists a unique configuration $w \in \mathcal{W}$ such that 
$$w_{\mathcal{T}_L^+} = w_1, \quad 
w_{\mathcal{T}_L^-} = w_2, \quad 
P_R(w) = w^\prime.$$
This configuration is formed by concatenating $w_1$ and $w_2$
(concatenation includes  the pairing structures of each $w_j$).
\end{remark}
Through the proof we  write $\mu=\mu_{L,N, \lambda, U}$. To begin,  we note that \eqref{eq:RPCS} follows in the standard way as properties \textit{(1)} and \textit{(2)} show that we have a positive semi-definite, symmetric bilinear form.
To prove \textit{(1)}  we note that, by Definition \ref{def:measure} and due to the symmetries of the torus and the fact that $U$ is invariant under reflections, $\mu(w)=\mu(\Theta w)$ for any $w \in \mathcal{W}$. Hence
\begin{equation}
\begin{aligned}
 \mu(f\Theta g)&=\sum_{w\in\mathcal{W}}f(w)\Theta g(w)\mu(w)=\sum_{\Theta w\in\mathcal{W}}f(\Theta w)\Theta g(\Theta w)\mu(w)
 \\
 &=\sum_{\Theta w\in\mathcal{W}} g(w)\Theta f(w)\mu(w)=\sum_{ w \in \mathcal{W}} g(w)\Theta f(w)\mu(w)=\mu(g\Theta f).
 \end{aligned}
 \end{equation}
For \textit{(2)} we condition on the number of links in $w$ crossing the reflection plane and on their colours. 
We write
\begin{equation}\label{eq:decomposition}
\mu(f\Theta f)=\sum_{w \in\Wcal^R}\mu(f\, ; \,w),
\end{equation}
where, for any $w^{\prime}   \in\Wcal^R$,
\begin{equation}
\begin{aligned}
\mu(f\, ; \,w^{\prime}):=&\sum_{\substack{w\in\Wcal \\ P_R(w)=w^{\prime}}}f(w)\Theta f(w) \mu(w)
\\
=& \Big( \prod_{e\in\Ecal_L^R} \, \, \frac{m_e(w^{\prime})!}{\lambda^{m_e(w^{\prime})}} \, \Big )  \, \,  \sum_{\substack{w\in\Wcal \\ P_R(w)=w^{\prime}}}f(w) \, 
\, \, \Big ( \prod_{e\in \Ecal_L^+}   \,\frac{\lambda^{m_e(w)}}{m_e(w)!} \Big )\, \,  \Big (  \prod_{x\in\mathcal{T}_L^+}  \, U_x(w) \Big ) 
\\
&\qquad\qquad\qquad\qquad\qquad\qquad\qquad\Theta f(w) \, \, \Big (   \prod_{e\in\Ecal_L^-}\frac{\lambda^{m_e(w)}}{m_e(w)!} \, \, \Big ) \, \, 
\Big (   \prod_{x\in\mathcal{T}_L^-} U_x(w)\Big ).
\end{aligned}
\end{equation}
Now, any $w\in\Wcal$ such that $P_R(w)=w^{\prime}$ uniquely defines $w_{\mathcal{T}_L^{\pm}}$, the restriction of $w$ to $\mathcal{T}_L^\pm$.
Thus, from Remark \ref{remark:observationidentif} we deduce that we can split the sum over $w\in\Wcal$ with $P_R(w)=w^{\prime}$ as the product of two independent sums and continue:
\begin{equation}
\begin{aligned}
\mu(f\, ; \,w^{\prime})=& \Big ( \prod_{e\in\Ecal_L^R} \, \, \frac{m_e(w^{\prime})!}{\lambda^{m_e(w^{\prime})}}  \Big ) \,  \bigg ( \sum_{\substack{w_1\in\Wcal^+\\ P_R(w_1)=w^{\prime}} }f(w_1) \, 
\, \, \Big ( \prod_{e\in \Ecal_L^+ }   \,\frac{\lambda^{m_e(w_1)}}{m_e(w_1)!} \Big )\, \,  \Big (  \prod_{x\in\mathcal{T}_L^+}  \, U_x(w_1) \Big )  \bigg )
\\
&\qquad\qquad\qquad\qquad\qquad\qquad\qquad
\bigg ( \sum_{\substack{w_2\in\Wcal^-\\ P_R(w_2)=w^{\prime}} } \Theta f(w_2) \, 
\, \, \Big ( \prod_{e\in \Ecal_L^- }   \,\frac{\lambda^{m_e(w_2)}}{m_e(w_2)!} \Big )\, \,  \Big (  \prod_{x\in\mathcal{T}_L^+}  \, U_x(w_2) \Big ) \bigg ) \\
=& 
\Big ( \prod_{e\in\Ecal_L^R} \, \, \frac{m_e(w^{\prime})!}{\lambda^{m_e(w^{\prime})}}  \Big ) \,  
\bigg ( 
\sum_{\substack{w_1\in\Wcal^+\\ P_R(w_1)=w^{\prime}} }f(w_1) \, 
\, \, \Big ( \prod_{e\in \Ecal_L^+ }   \,\frac{\lambda^{m_e(w_1)}}{m_e(w_1)!} \Big )\, \,  \Big (  \prod_{x\in\mathcal{T}_L^+}  \, U_x(w_1) \Big ) \bigg )^2 .
\end{aligned}
\end{equation}
The last equality holds true by the  symmetry of the extended torus. Since the last expression is non-negative, from (\ref{eq:decomposition}) we conclude the proof of (2) and, thus, the proof of the proposition.
\end{proof}

\subsection{Chessboard estimate}
\label{sect:Chessboardscheme}
We now introduce the notion of support.
Contrary to the notion of domain, 
which was introduced in Section \ref{sect:RP},
the notion of support is defined only for subsets of the original torus.
We say that the function
 $f : \mathcal{W} \mapsto \mathbb{R}$ has \textbf{\textit{support}} in 
 $D \subset \T_L$ if it has domain in $D \cup   D^{(2)}$,
 where $D^{(2)}$ is defined as the set of sites which are `on top' of those in $D$, 
$$
  D^{(2)} : = 
  \{ z \in \mathbb{Z}^{d+1} \, \, : z - \boldsymbol{e}_{d+1} \in D    \}.
 $$
  Fix an arbitrary site $t \in \T_L$ and let  $t_0=o$, $t_1$, $\ldots$, $t_k = t$
be a self-avoiding nearest-neighbour path from $o$ to $t$,
and for any $i \in \{1, \ldots, k\}$, let $\Theta_i$ be the reflection with respect to the plane going through the edge $\{  t_{i-1}, t_{i}  \}$.
Let $f$ be a function having support in $\{o\}$ and define
$$
f^{ [t]} : = \Theta_k  \circ \Theta_{k-1} \, \ldots \, \circ \Theta_1 \, ( f ).
$$
Observe that the function $f^{ [t]}$ does not depend on the chosen path (a glance at  
Figure \ref{Fig:chessboard} might be useful).
\begin{figure}
  \centering
    \includegraphics[width=0.31\textwidth]{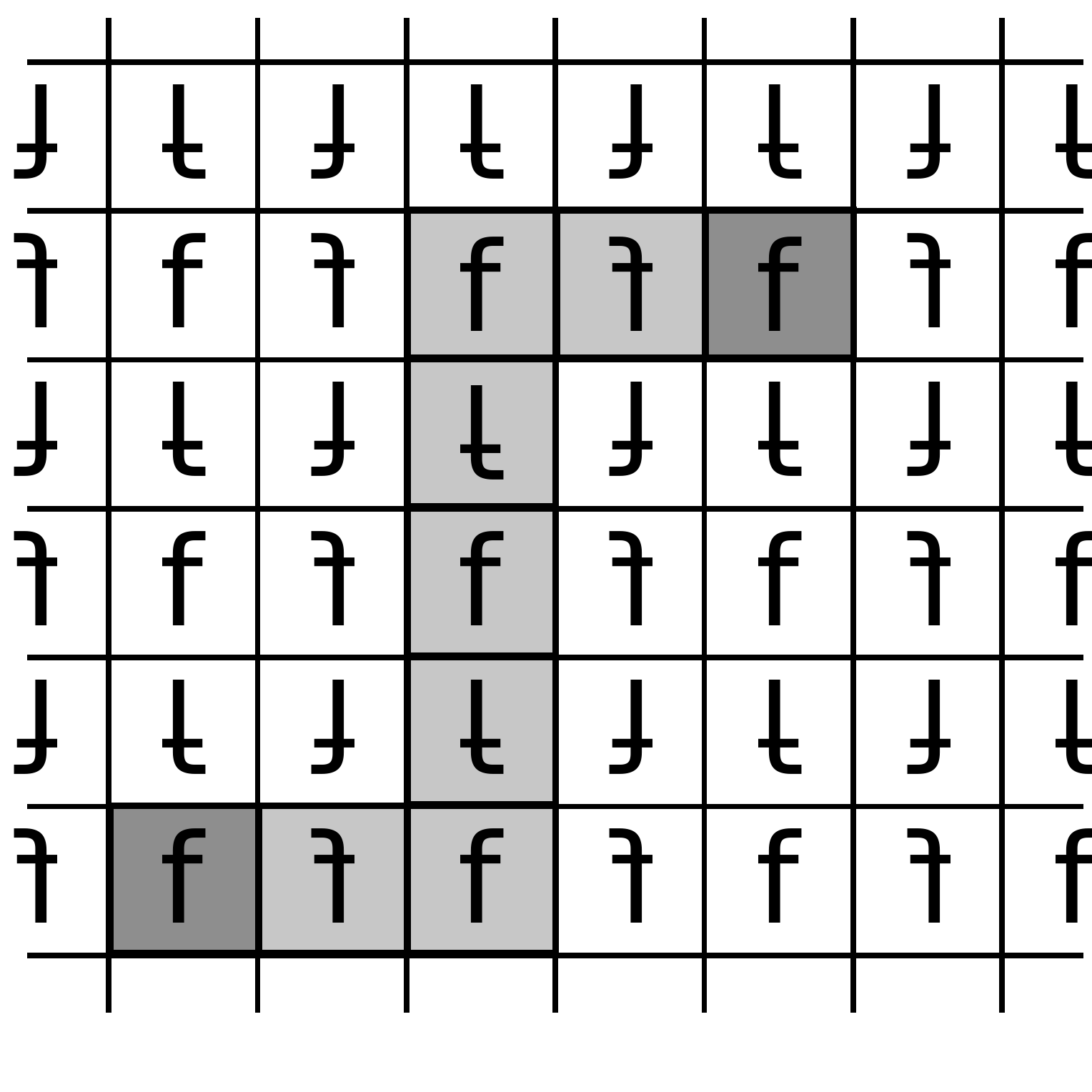}
      \caption{  The function $f^{ [t]  } : =  \Theta_k  \circ \Theta_{k-1} \, \ldots \, \circ \Theta_1 \, ( f )$ does not depend on the chosen path.   }\label{Fig:chessboard}
\end{figure}
\begin{proposition}[Chessboard estimate]\label{prop:chessboardabstract}
Let $f = (f_t)_{t \in \T_L}$ be a sequence of  real-valued functions
with support $\{o\}$ each and which are either all bounded or all non-negative.
Under the same assumptions as in Theorem \ref{theo:RP1}, 
we have that,
$$
\mu_{L, N, \lambda, U} \Big(   \prod_{t \in \T_L} f^{[t]}_t  \Big   ) \, \leq \, 
\bigg (
\,  \, \prod_{t \in \T_L} \, \, 
   \mu_{L, N, \lambda, U}  \Big (   \prod_{s \in \T_L} f_t^{[s]}  \Big )   \, \, \, \,  \bigg )^{\frac{1}{|\T_L|}}
$$
\end{proposition}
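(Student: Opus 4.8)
The plan is to prove the Chessboard estimate by the standard iterative ``reflection-and-multiply'' argument, using Theorem~\ref{theo:RP1} (the Cauchy--Schwarz inequality \eqref{eq:RPCS} coming from reflection positivity) repeatedly along a sequence of orthogonal reflection planes until every factor $f_t$ has been reflected around to sit at every site of $\T_L$. First I would set up notation: writing $\mu = \mu_{L,N,\lambda,U}$ and, for a function $g$ with support $\{o\}$, recalling that $g^{[t]}$ is the reflected copy supported at $\{t\}$, with the key fact (already asserted in the excerpt) that $g^{[t]}$ is independent of the chosen path from $o$ to $t$; this path-independence is what makes the whole scheme consistent. The object to bound is $\mu\bigl(\prod_{t\in\T_L} f_t^{[t]}\bigr)$, a product of functions each supported at a distinct vertex.

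The core of the argument is a single reflection step. Fix a reflection plane $R$ through edges, orthogonal to some $\boldsymbol{e}_i$, $i\in\{1,\dots,d\}$, splitting $\mathcal{T}_L$ into halves $\mathcal{T}_L^{\pm}$ with reflection $\Theta$. The product $\prod_{t\in\T_L} f_t^{[t]}$ factors as $F^+ \cdot F^-$ where $F^{\pm}$ collects the factors indexed by $t\in\T_L^{\pm}$ (using that support in $\{t\}\cap\T_L^{\pm}$ means domain in $\mathcal{T}_L^{\pm}$, since the virtual vertex on top of $t$ lies on the same side). By \eqref{eq:RPCS},
\begin{equation*}
\mu(F^+\,\Theta(\Theta F^-)) \,=\, \mu(F^+\,\Theta G)\,\leq\,\mu(F^+\,\Theta F^+)^{1/2}\,\mu(G\,\Theta G)^{1/2},
\end{equation*}
where $G := \Theta F^-\in\mathcal{A}^+$. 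Now $\mu(F^+\,\Theta F^+)$ is the expectation of a product over \emph{all} of $\T_L$ in which every factor on the $\mathcal{T}_L^-$ side has been replaced by the $R$-reflection of its mirror partner on the $\mathcal{T}_L^+$ side; by path-independence of $g\mapsto g^{[\cdot]}$, the reflected factor $\Theta(f_{\Theta t}^{[\Theta t]})$ is exactly $f_{\Theta t}^{[t]}$, so the new product is again of the same shape as the original, but with the index set of the functions $f_{\bullet}$ now ``folded'' so that both $t$ and $\Theta t$ carry the function $f_{t}$ (for $t\in\T_L^+$); similarly for $\mu(G\,\Theta G)$ with the roles reversed. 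Iterating over a family of $d$ ``coordinate'' reflection planes, then over all translates of these, one reaches — after $|\T_L|$ such bisections counted with multiplicity, i.e. after taking the $2^{\log_2|\T_L|}$-th root, which is the $|\T_L|$-th root — configurations in which a single function $f_t$ has been spread to sit (as $f_t^{[s]}$) at every $s\in\T_L$ simultaneously. That is precisely the quantity $\mu\bigl(\prod_{s\in\T_L} f_t^{[s]}\bigr)$ appearing inside the product on the right-hand side, and the accumulated exponents multiply to $1/|\T_L|$ on each such term. Keeping track of exactly which $f_t$ survives in which branch of the recursion, and checking that every $t\in\T_L$ occurs exactly once, yields the claimed bound.

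The hypotheses are used as follows: ``all bounded or all non-negative'' guarantees that the sums defining the $\mu$-averages converge absolutely and that \eqref{eq:RPCS} applies at each stage (the functions $F^+$, $G$ stay in $\mathcal{A}^+$ and of admissible type), and invariance of $U$ under reflections plus the torus symmetries is what powers Theorem~\ref{theo:RP1}. One should also note that if some intermediate $\mu\bigl(F^+\,\Theta F^+\bigr)$ vanishes the inequality is trivial, and otherwise it is strictly positive, so dividing/taking roots is legitimate; this is the usual way one handles the degenerate case in chessboard arguments.

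The main obstacle, and the step I would be most careful about, is the bookkeeping of the iteration: verifying rigorously that repeated application of the single-reflection Cauchy--Schwarz step, along a correctly chosen schedule of reflection planes, does produce exactly the fully-folded products $\prod_{s\in\T_L} f_t^{[s]}$ with the correct multiplicities and the correct final exponent $1/|\T_L|$, and that path-independence of $g\mapsto g^{[t]}$ is invoked consistently so that the reflected factors genuinely reassemble into objects of the same form at each stage. This is the standard but somewhat delicate combinatorial heart of the chessboard estimate (as in Fr\"ohlich--Israel--Lieb--Simon and in \cite{Lees}); once the single reflection step above is in place, the rest is an induction on $|\T_L|$ (or on the number of reflection planes) whose inductive hypothesis is precisely the statement of the proposition for the two halves of the folded configuration.
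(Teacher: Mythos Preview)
Your outline is correct and is precisely the classical iterative reflection argument (Fr\"ohlich--Lieb, reproduced in \cite[Theorem~5.8]{Biskup} and \cite[Theorem~10.11]{FriedliVelenik}) to which the paper simply defers without giving its own proof; the paper's only additional remark is that, since all reflections used are through planes orthogonal to $\boldsymbol{e}_i$ with $i\in\{1,\dots,d\}$, the virtual vertices are carried along passively and the standard proof on $\T_L$ applies verbatim.
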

The proof of Proposition \ref{prop:chessboardabstract} for a measure $\mu$ satisfying (\ref{eq:RPCS}) is classical and was first presented in \cite{FrohlichLieb}.
Since we only use reflections with respect to reflection planes which are orthogonal to the cartesian vectors $\boldsymbol{e}_i$, $i \in \{1, \ldots, d\}$ (not $i = d+1$), virtual vertices play no role in the proof and thus the same proof of
\cite{FrohlichLieb} applies to our case directly.
For the proof of Proposition \ref{prop:chessboardabstract} we refer to the original paper \cite{FrohlichLieb} or to the overviews  \cite[Theorem 5.8]{Biskup} or \cite[Theorem 10.11]{FriedliVelenik}.
We now introduce a \textit{central quantity.}
Recall that, for any vertex $x \in \mathcal{T}_L$ and any configuration $w \in \mathcal{W}$,  $u_x(w)$ denotes  the number of links touching $x \in \mathcal{T}_L$ which are unpaired at $x$. 
\begin{definition}[Central quantity]\label{def:Zh}
\textit{For any $L \in \mathbb{N}$, $\lambda \in \mathbb{R}_{\geq 0}$, $N \in \mathbb{N}_{>0}$, 
any $U$ as in Definition \ref{def:measure}, 
any vector of real numbers $\boldsymbol{h} = (h_x)_{x \in \mathcal{T}_L}$, we define}
\begin{equation}\label{eq:Zh}
 \mathcal{Z}_{L, N, \lambda, U}( \boldsymbol{h} ) : = \mu_{L, N, \lambda, U}  \Big( \prod_{x \in \mathcal{T}_L} 
h_x^{u_x}
\Big  )
\end{equation}
\end{definition}
In other words, the function $h_x^{u_x}$ in Definition \ref{def:Zh} assigns a multiplicative factor $h_x$ to each link touching $x$ which is unpaired at $x$.
We  assume that the weight function $U$ is such that the quantity (\ref{eq:Zh}) is finite for any vector $\boldsymbol{h}$ as in Definition \ref{def:Zh} and for any $L \in 2 \mathbb{N}$.
The next proposition is an immediate consequence of Proposition 
\ref{prop:chessboardabstract}.

\begin{proposition}\label{prop:chessboardscheme}
Fix  arbitrary $L \in 2 \mathbb{N}$, $\lambda \geq 0$, $N \in \mathbb{N}_{>0}$.
Suppose that the weight function $U$ is invariant under reflections.
Let $\boldsymbol{h} = (h_z)_{z \in \mathcal{T}_L}$ be a real-valued vector
such that $|h_z| \leq 1$ for every $z \in \mathcal{T}_L$. For any $x \in \T_L$ define the new real-valued vector
$\boldsymbol{h}^{x} = ( h_z^{x})_{z \in \mathcal{T}_L}$ which is obtained from $\boldsymbol{h}$ by copying the value $h_x$ at each original vertex and the value 
$h_{x+ \boldsymbol{e}_{d+1}}$ at each virtual vertex, namely 
$$
\forall z \in \mathcal{T}_L \quad \quad h_z^{x} :
= 
\begin{cases}
h_x  &\mbox{ if $z \in \T_L$} \\
h_{x + \boldsymbol{e}_{d + 1}}  & \mbox{ if $z \in \T_L^{(2)}$}.
\end{cases}
$$
We have that,
$$
 \mathcal{Z}_{L, N, \lambda, U}( \boldsymbol{h} )
\leq 
\Big ( \, \, \, \prod_{x \in \T_L}   \mathcal{Z}_{L, N, \lambda, U}
\big  ( \boldsymbol{h}^x  \big ) \, \, \, \Big )^{\frac{1}{|\T_L|}}.
$$
\end{proposition}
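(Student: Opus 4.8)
The plan is to read the inequality off the Chessboard estimate, Proposition~\ref{prop:chessboardabstract}, once $\prod_{x\in\mathcal{T}_L} h_x^{u_x}$ has been written as a product of reflected single‑site functions. We may first assume $\lambda>0$: when $\lambda=0$ only the empty link configuration carries nonzero $\mu_{L,N,0,U}$‑mass and on it $u_x\equiv 0$, so both sides of the asserted inequality collapse to the value of $\mu_{L,N,0,U}$ on the constant function $\mathbbm 1$, and there is nothing to prove.

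For each $t\in\T_L$ introduce the function $g_t:\mathcal{W}\to\mathbb{R}$,
\[
g_t(w):=h_t^{\,u_o(w)}\,h_{t+\boldsymbol e_{d+1}}^{\,u_{o+\boldsymbol e_{d+1}}(w)},
\]
where $h_t$ and $h_{t+\boldsymbol e_{d+1}}$ are the (fixed) components of $\boldsymbol h$ attached to $o$ and to the virtual vertex $o+\boldsymbol e_{d+1}$ on top of it. Since $u_o$ has domain $\{o\}$ and $u_{o+\boldsymbol e_{d+1}}$ has domain $\{o+\boldsymbol e_{d+1}\}=\{o\}^{(2)}$, the function $g_t$ has support $\{o\}$ in the sense of Section~\ref{sect:Chessboardscheme}; and because $|h_z|\le 1$ for every $z$ while $u_o,u_{o+\boldsymbol e_{d+1}}\ge 0$, each $g_t$ is bounded by $1$ in absolute value, so the family $(g_t)_{t\in\T_L}$ satisfies the hypotheses of Proposition~\ref{prop:chessboardabstract}. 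Using the covariance $u_x(\Theta w)=u_{\Theta(x)}(w)$, the fact that every reflection plane entering the definition of $f\mapsto f^{[s]}$ is orthogonal to some $\boldsymbol e_i$ with $i\le d$ (so it fixes the last coordinate and carries a virtual vertex to the virtual vertex above the image of its original), and that the chessboard reflections send $o$ to $s$, one obtains
\[
g_t^{[s]}(w)=h_t^{\,u_s(w)}\,h_{t+\boldsymbol e_{d+1}}^{\,u_{s+\boldsymbol e_{d+1}}(w)}\qquad\text{for every } s\in\T_L.
\]

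It remains to identify the two averages appearing in Proposition~\ref{prop:chessboardabstract}. Taking $s=t$ and multiplying over $t\in\T_L$, and using that $\mathcal{T}_L=\T_L\cup\T_L^{(2)}$ with $\T_L^{(2)}=\{t+\boldsymbol e_{d+1}:t\in\T_L\}$, gives $\prod_{t\in\T_L}g_t^{[t]}=\prod_{x\in\mathcal{T}_L}h_x^{u_x}$, so $\mu_{L,N,\lambda,U}\big(\prod_{t}g_t^{[t]}\big)=\mathcal{Z}_{L,N,\lambda,U}(\boldsymbol h)$ by Definition~\ref{def:Zh}. For fixed $t$, $\prod_{s\in\T_L}g_t^{[s]}(w)=\prod_{z\in\T_L}h_t^{u_z(w)}\prod_{z\in\T_L^{(2)}}h_{t+\boldsymbol e_{d+1}}^{u_z(w)}=\prod_{z\in\mathcal{T}_L}(h^t_z)^{u_z(w)}$, which after applying $\mu_{L,N,\lambda,U}$ is exactly $\mathcal{Z}_{L,N,\lambda,U}(\boldsymbol h^t)$. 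Substituting these identities into the conclusion of Proposition~\ref{prop:chessboardabstract} yields precisely the claimed bound. The only step needing care is the closed form of $g_t^{[s]}$ above, i.e.\ checking that the chessboard reflections turn $u_{o+\boldsymbol e_{d+1}}$ into $u_{s+\boldsymbol e_{d+1}}$ rather than into some function mixing the two layers of the extended torus; this is exactly where one uses that admissible reflection planes are never orthogonal to $\boldsymbol e_{d+1}$, together with the path‑independence of $f^{[s]}$ recorded in Section~\ref{sect:Chessboardscheme}. Everything else is routine bookkeeping.
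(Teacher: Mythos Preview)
Your proof is correct and follows essentially the same approach as the paper's: you define the single-site functions $g_t=(h_t)^{u_o}(h_{t+\boldsymbol e_{d+1}})^{u_{o+\boldsymbol e_{d+1}}}$ with support $\{o\}$, compute their reflected versions, and read the inequality off the Chessboard estimate. Your treatment is in fact slightly more careful than the paper's in two places (the separate handling of $\lambda=0$, and the explicit remark that admissible reflection planes are never orthogonal to $\boldsymbol e_{d+1}$ so the two layers of the extended torus are preserved), but the argument is the same.
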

\begin{proof}
The proof follows from an immediate application of Proposition \ref{prop:chessboardabstract}.
Define,
$$\forall x \in \T_L \quad \quad 
f_{\boldsymbol{h},x}:=  { \, \big  (h_x  \big )\, }^{u_o} \, \,  {\big ( \,  h_{{x+ \boldsymbol{e}_{d+1}}} \big ) \, } ^{u_{o+ \boldsymbol{e}_{d+1}}},$$
note that this function has support $\{o\}$
and which is bounded.
Moreover, note that for any $x \in \T_L$,
\begin{equation}\label{eq:reflectedh}
f^{[x]}_{\boldsymbol{h},x} = { \, \big  (h_x  \big )\, }^{u_x} \, \,  {\big ( \,  h_{{x+ \boldsymbol{e}_{d+1}}} \big ) \, } ^{u_{x+ \boldsymbol{e}_{d+1}}},
\end{equation}
which has support $\{x\}$.
From this we deduce that,
$$
 \mathcal{Z}_{L, N, \lambda, U}( \boldsymbol{h} ) = 
 \mu_{L, N, \lambda, U}  \Big( \prod_{x \in \mathcal{T}_L} 
f^{[x]}_{\boldsymbol{h},x}
\Big  )
$$
and that, for any $x \in \T_L$,
$$
 \mathcal{Z}_{L, N, \lambda, U}( \boldsymbol{h}^x ) = 
 \mu_{L, N, \lambda, U}  \Big( \prod_{z \in \mathcal{T}_L} 
f^{[z]}_{\boldsymbol{h},x}
\Big  )
$$
The claim now follows from a direct application of Proposition \ref{prop:chessboardabstract}.
\end{proof}

\subsection{Polynomial expansion}
\label{sect:relation}
This sub-section presents an important step of the proof of the Key Inequality, namely 
  Proposition \ref{prop:moments} below, which
 states a 
relation between the values of any vector $\boldsymbol{h}$, 
the partition function  $\mathcal{Z}( \varphi \boldsymbol{h})$
in the limit $\varphi \rightarrow 0$, where $\varphi \in \mathbb{R}$,
and the partition functions  which were defined in (\ref{eq:partitionlambda}).
To make this connection we  choose an appropriate weight function, which is  denoted by $H$ and is introduced in the next definition,  and  expand $\mathcal{Z}_{L,N, \lambda, H}(\varphi \boldsymbol{h})$ as a polynomial
in $\varphi$.
Recall that $n_x$ denotes the number of pairings at $x$ (i.e. one half the number of links touching $x$ which are paired at $x$ to another link touching $x$).
\begin{definition}\label{def:definitionH}
We define the  weight functions, $H = (H_x)_{x \in \mathcal{T}_L}$, as follows:
\begin{align}\label{eq:weightoriginal2}
 \forall x \in \T_L \quad \quad H_x  &  := \begin{cases}
1 & \mbox{ if $n_x \leq 1$, $u_x \leq 2$, and no link on $\{x, x + \boldsymbol{e}_{d+1}\}$ is unpaired at $x$,} \\
\frac{1}{2} & \mbox{ if $n_x \leq 1$, $u_x \leq 2$, and precisely one link on $\{x, x + \boldsymbol{e}_{d+1}\}$ is unpaired at $x$,} \\
0 & \mbox{ otherwise. }
\end{cases} \\
 \forall x \in \T_L^{(2)} \quad \quad H_x & : = \mathbbm{1}_{ \{n_x = 0 \} }
\end{align}
Moreover, we define  $\mathcal{W}^1$, the set of configurations $w \in \mathcal{W}$ such that 
$\prod_{x \in \mathcal{T}_L} H_x(w)  >0$.
\end{definition}
Each configuration $w \not\in \mathcal{W}^1$ has weight zero under $\mu_{ L, N, \lambda, H }$ and thus ignoring it  costs  nothing.
See Figure \ref{Fig:Pairingsexample0} for an example of two realisations $w$ which are not in $\mathcal{W}^1$.
The upper bound  $u_x \leq 2$ in Definition \ref{def:definitionH} is only necessary to guarantee that $|\mathcal{W}^1| < \infty$ and $2$ might be replaced by any other integer greater than two with no effect on the next pages. From the boundedness of $|\mathcal{W}^1|$ we deduce that $\mathcal{Z}_{L, N, \lambda, H}(\boldsymbol{h}) < \infty$ for any $L \in \mathbb{N}$,   $N, \lambda \in [0, \infty)$,
and 
$\boldsymbol{h} \in \mathbb{R}^{\mathcal{T}_L   }$.
Note also that $H_x$ has domain $\{x\}$ and that 
$H = (H_x)_{x \in \mathcal{T}_L}$  is invariant under reflections, thus all the results stated in Sections \ref{sect:RP} and \ref{sect:Chessboardscheme} apply to $\mu_{ L, N, \lambda, U }$ under the choice of $U= H$.
As we will explain in Section \ref{sect:technicallemma}, the choice of $H$ is such that any closed path in $w$ lies entirely in the original torus and is vertex-self-avoiding, moreover closed paths are mutually vertex-disjoint
(paths will be defined later, but the reader might already try to have an intuition of what they are).
Contrary to closed paths, open paths are not entirely vertex-self-avoiding, since they are allowed to touch themselves or other paths  at their end-points.
The open paths might start (or end) at virtual vertices or at original vertices and they are allowed to touch the virtual vertices only at their end-points.
These details and further technical aspects are fundamental for the validity of the next proposition and will be discussed 
in  Section \ref{sect:technicallemma}.
 \begin{figure}
  \centering
    \includegraphics[width=\textwidth]{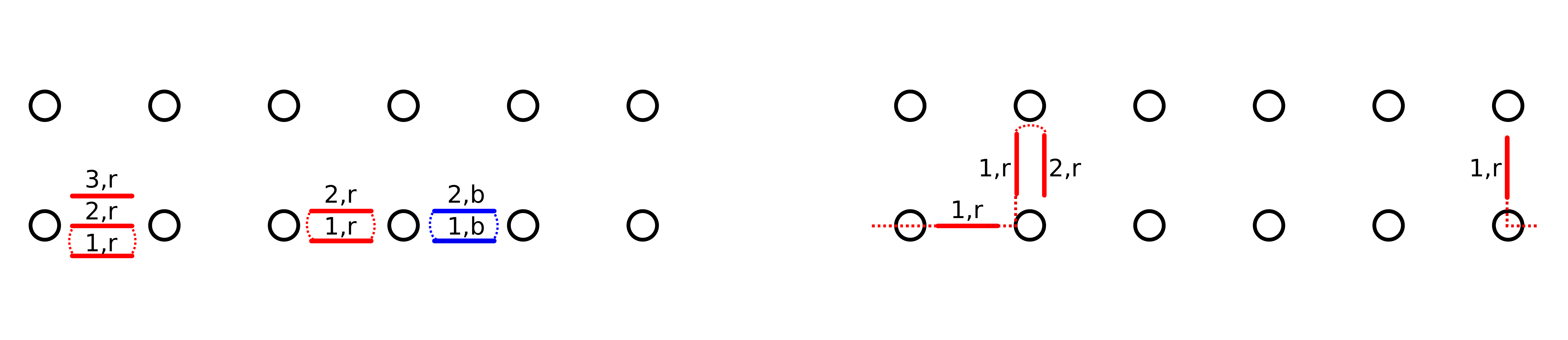}
      \caption{Two examples of realisations $w \in \mathcal{W} \setminus \mathcal{W}^1$ on the extended torus $(\mathcal{T}_L, \mathcal{E}_L)$ in dimension $d = 1$, where $L = 6$, with the upper row representing the virtual vertices. The realisation  on the left is not in $\mathcal{W}^1$ since there exists a vertex $x$ with $n_x = 2$, the realisation  on the right is not in $\mathcal{W}^1$ since there exists a virtual vertex $y$ with $n_y = 1$.
    }\label{Fig:Pairingsexample0}
\end{figure}
For the  statement  of the next proposition recall the definition of the partition functions (\ref{eq:partitionlambda}).
\begin{proposition}[\textbf{Polynomial expansion}]\label{prop:moments}
For any fixed $L \in 2 \mathbb{N}$,  $N \in \mathbb{N}_{ >0 }$, $\lambda \in \mathbb{R}_{>0}$,   any vector of real numbers $\boldsymbol{h} = (h_x)_{x \in \mathcal{T}_L}$,
and $\varphi \in \mathbb{R}$,
we have that, 
\begin{equation}\label{eq:expansion}
\mathcal{Z}_{L, N, \lambda, H} ( \varphi \, \boldsymbol{h} ) = \mathbb{Y}^\ell_{L, N, \lambda} \,  + \, \varphi^2 \mathcal{Z}^{(2)}_{L, N, \lambda, H} (\boldsymbol{h})  + o(\varphi^2),
\end{equation}
in the limit as $\varphi \rightarrow 0$,  where 
\begin{multline*}
\mathcal{Z}^{(2)}_{L, N, \lambda, H} (\boldsymbol{h})  :=  
\, N \, \lambda \, 
\mathbb{Y}^{\ell}_{L, N, \lambda} \,
\Big ( \big (\sum\limits_{\{x,y\} \in \E_L}   h_x h_y  \big ) \, + \,
\frac{1}{2} \sum\limits_{x \in \T_L}  h_x  h_{x + \boldsymbol{e}_1} 
\Big ) 
\\ + \, \, \,     N \, \frac{\lambda^2}{2} \,\sum\limits_{ \substack{x,y  \in \T_L }} 
\mathbb{Y}_{L, N, \lambda}(x,y)  \, \,
\big( \sum\limits_{  \substack{ q \in \mathcal{T}_L: \\ \{x,q\} \in \mathcal{E}_L  } } h_u \big ) \, \,
\big( \sum\limits_{  \substack{ r \in \mathcal{T}_L: \\ \{y,r\} \in \mathcal{E}_L  } } h_r  \big )
\end{multline*}
\end{proposition}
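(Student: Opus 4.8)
The plan is to expand $\mathcal{Z}_{L,N,\lambda,H}(\varphi\boldsymbol{h})$ directly from its definition (\ref{eq:Zh}) as a sum over configurations $w \in \mathcal{W}^1$, organizing the sum according to the total number of unpaired links $U(w) := \sum_{x \in \mathcal{T}_L} u_x(w)$. Since $\prod_{x} h_x^{u_x}$ picks up a factor $\varphi^{U(w)}$ when we replace $\boldsymbol{h}$ by $\varphi\boldsymbol{h}$, the coefficient of $\varphi^j$ is (up to the measure weights) the sum over configurations with exactly $j$ unpaired links. The key structural facts, to be established in Section \ref{sect:technicallemma} (which I may assume once proven), are: because of the constraints in Definition \ref{def:definitionH} — $n_x \le 1$, $u_x \le 2$, at most one unpaired link on the vertical edge at each original vertex, and $n_x = 0$ at virtual vertices — every $w \in \mathcal{W}^1$ decomposes into closed paths lying entirely in $\T_L$ that are vertex-self-avoiding and mutually vertex-disjoint, together with open paths (``segments'') whose endpoints are the unpaired links. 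Moreover $U(w)$ is always even, so only even powers of $\varphi$ appear; the $\varphi^0$ term comes from configurations with no unpaired links, and the $\varphi^2$ term from configurations with exactly one open path (two unpaired link-ends).

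First I would treat the $\varphi^0$ term. A configuration $w \in \mathcal{W}^1$ with $U(w) = 0$ consists only of closed paths in $\T_L$; I would set up a weight-preserving bijection between such $w$ and pairs (element of $\Omega^\ell$, colouring data), where a link configuration with $m_e \in \{0,1,2\}$ on each edge together with the pairings encodes a multigraph whose components are loops and double edges, the loops being traversable in two orientations. Tracking the combinatorial factors — the $\lambda^{m_e}/m_e!$ from (\ref{eq:measure}), the number of colourings ($N$ per closed path since all its links share one colour), the $2$ for orientation of each genuine loop but not double edges, and the pairing multiplicities at vertices of degree $2$ — should collapse precisely to $\sum_{\pi \in \Omega^\ell} \lambda^{\mathcal{H}(\pi)} (N/2)^{\mathcal{L}(\pi)} = \mathbb{Y}^\ell_{L,N,\lambda}$, matching the claimed leading term. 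This is the bookkeeping that the normalization $1/m_e!$ in (\ref{eq:measure}) is designed to make clean.

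Next, the $\varphi^2$ term: configurations with exactly two unpaired link-ends. These consist of a background of closed paths plus one open path (segment). There are two cases depending on where the two unpaired link-ends sit. In the first case both lie on the \emph{same} edge $\{x,y\}$: then the ``open path'' is a single link on that edge, unpaired at both endpoints (or an unpaired link on a vertical edge, where $H_x = \tfrac12$ is relevant) — this contributes the term $N\lambda\,\mathbb{Y}^\ell_{L,N,\lambda}\big(\sum_{\{x,y\}\in\E_L} h_x h_y + \tfrac12\sum_{x\in\T_L} h_x h_{x+\boldsymbol{e}_1}\big)$, where the $\tfrac12$ tracks the $H_x = \tfrac12$ weight at the original endpoint of a vertical unpaired link, the sum over $\{x,y\}\in\E_L$ counts horizontal single-link segments, and the remaining $N\lambda \mathbb{Y}^\ell$ comes from colouring (the segment gets one of $N$ colours, factor $\lambda^1$ from its single link) and the closed-path background. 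In the second case the two link-ends lie on \emph{distinct} edges incident to (possibly equal) vertices $x$ and $y$: then removing those two half-edges, the segment is a genuine open path from a neighbour $q$ of $x$ to a neighbour $r$ of $y$, and the core of the configuration (segment rerouted as a walk from $x$ to $y$ plus the background closed paths) is an element of $\Omega_{x,y}$. Summing over the choices of the terminal edges $\{x,q\}, \{y,r\} \in \mathcal{E}_L$ produces the factors $\big(\sum_{q:\{x,q\}\in\mathcal{E}_L} h_q\big)\big(\sum_{r:\{y,r\}\in\mathcal{E}_L} h_r\big)$; the two extra links contribute $\lambda^2$, the colour of the walk contributes a factor $N$, and a combinatorial factor $\tfrac12$ arises from the bijection identifying the open path with a directed walk in $\Omega_{x,y}$ (orientation/endpoint labelling), yielding $N\tfrac{\lambda^2}{2}\sum_{x,y}\mathbb{Y}_{L,N,\lambda}(x,y)(\cdots)(\cdots)$. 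Assembling the two cases gives exactly $\mathcal{Z}^{(2)}_{L,N,\lambda,H}(\boldsymbol{h})$.

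Finally, the error term: all higher coefficients involve finitely many configurations (since $|\mathcal{W}^1| < \infty$ by the $u_x \le 2$ bound), each carrying a fixed finite weight, so the remaining sum is a polynomial in $\varphi$ with lowest remaining degree $4$, hence $o(\varphi^2)$ as $\varphi \to 0$; the finiteness of $\mathcal{W}^1$ also justifies that the expansion is exact (a genuine finite polynomial identity) rather than merely asymptotic. The main obstacle I anticipate is the second case of the $\varphi^2$ analysis: correctly matching the open-path/segment data — including the pairing structure at the degree-controlled vertices, the orientation conventions, and the way the ``rerouting'' of the open path into a walk from $x$ to $y$ interacts with self-intersections at $x$ and $y$ (which are permitted only at walk endpoints, precisely what the $u_x \le 2$, $n_x \le 1$ conditions allow) — against the directed partition function $\mathbb{Z}_{L,N,\lambda}(x,y) = \sum_{\pi \in \Omega_{x,y}}\lambda^{\mathcal{H}(\pi)}(N/2)^{\mathcal{L}(\pi)}$, so that the combinatorial constant comes out as exactly $N\lambda^2/2$. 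Verifying this requires the careful path-decomposition lemmas deferred to Section \ref{sect:technicallemma}, on which the whole argument rests.
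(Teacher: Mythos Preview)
Your approach is essentially the paper's: expand $\mathcal{Z}_{L,N,\lambda,H}(\varphi\boldsymbol{h})$ according to the total number $M=\sum_x u_x$ of unpaired link-ends, note that $M$ is always even, identify the $M=0$ contribution with $\mathbb{Y}^\ell_{L,N,\lambda}$ via the bijection with $\Omega^\ell$, and for $M=2$ decompose according to the structure of the unique open path, deferring the combinatorial identifications to Lemma~\ref{lemma:componentsidentification}.

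There is one genuine imprecision in your $\varphi^2$ case split. The correct dichotomy is \emph{segment} (the open path is a single link, unpaired at both endpoints) versus \emph{$\ell$-walk} (the open path has at least two links, with two extremal links each unpaired at one end), not ``same edge'' versus ``distinct edges''. An $\ell$-walk can have both extremal links on the same undirected edge: when $(x,q)=(y,r)$ the walk consists of two links on $\{x,q\}$ paired to each other at $x$ and both unpaired at $q$; when $(y,r)=(q,x)$ the walk can consist of two or even three links on $\{x,q\}$ (cf.\ Figures~\ref{Fig:Pairingsexample2} and~\ref{Fig:Pairingsexample3}). Your Case~1 asserts that same-edge forces a single link, which is false, and your Case~2 requires distinct edges, so these walk configurations fall through the cracks of your split. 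The paper's Lemma~\ref{lemma:componentsidentification} handles exactly these boundary cases via the distinction $(x,q)=(y,r)$ versus $(x,q)\neq(y,r)$ in~(\ref{eq:formula3}) and the multiplicity factors $2$, $3$, $6$ in its proof; once one sums over \emph{unordered pairs of directed} extremal edges (as in the paper's~(\ref{eq:consideration3})), all walk contributions---including the same-edge ones---fold into the double sum $\sum_{x,y}\mathbb{Y}_{L,N,\lambda}(x,y)\big(\sum_q h_q\big)\big(\sum_r h_r\big)$. Since you already flag this step as the main obstacle and defer to the lemma, the outline survives; just replace your edge-based split by the segment/$\ell$-walk split before attempting the bookkeeping.
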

The Key Inequality will follow from a concavity property of the central quantity at $\boldsymbol{h}=0$,
namely the term of order $O(\varphi^2)$ in the polynomial expansion is non-positive for a large class of choices of $\boldsymbol{h}$.
Such a concavity property will follow from reflection positivity.
Note that the terms in the expansion are slightly different than in (\ref{eq:expansion1}), since here we use the partition functions parametrised by $\lambda$,  which were defined in (\ref{eq:partitionlambda}), and the entries of the vector $\boldsymbol{h}$  are associated to the vertices of the extended torus (later we will relate the vector $\boldsymbol{h}$ to a vector $\boldsymbol{v}$, whose entries are associated to the vertices of the original torus, obtaining an expression  which is similar to (\ref{eq:expansion1})).
The remainder of the current subsection is devoted to the proof of Proposition \ref{prop:moments}.
Before presenting  the proof, we will provide some definitions and state a preparatory lemma.
All the definitions below are functional to the proof of Proposition \ref{prop:moments}. Section \ref{sect:proofinequailty}, which contains the proof of Theorem \ref{theo:keyinequality}, 
can be read independently from  what follows below in the current subsection.

\textbf{Paths.}
Given $w \in \mathcal{W}$, we use $(\{x, y\}, p)$ to denote the p-th link of $w$ which is on the edge $\{x,y\}$, with 
 $p \in \{1, \ldots,  m_{\{x,y\}}(w) \}$.
We say that a set of links \textit{$S$} in $w$,
$$
S = \big \{
(\{x_1, y_1\}, p_1),
(\{x_2, y_2\}, p_2),
\ldots 
((x_\ell, y_\ell), p_\ell)
\big \},$$
is \textit{pairing-connected in  $w$} if, for any 
pair of links, $(\{x, y\}, p)$,
$(\{x^\prime,y^\prime\}, p^\prime) \in S$,
there exists an ordered sequence of links in  $S$,
 $\big (
(\{x^\prime_1, y^\prime_1\}, p^\prime_1),$
$(\{x^\prime_2, y^\prime_2\}, p^\prime_2),
\ldots
(\{x^\prime_k, y^\prime_k\}, p^\prime_k)
\big ) \subset S$
such that the following two conditions hold at the same time:
\begin{enumerate}[(i)]
\item $(\{x, y\}, p) = (\{x^\prime_1, y^\prime_1\}, p^\prime_1)$, and
$(\{x^\prime, y^\prime\}, p^\prime) = (\{x^\prime_k, y^\prime_k\}, p^\prime_k)$,
\item for any $i \in \{1, \ldots, k-1\}$,
$y^\prime_i = x^\prime_{i+1}$
and $(\{x^\prime_i, y^\prime_i\}, p^\prime_i )$
is paired to  $(\{x^\prime_{i+1}, y^\prime_{i+1}\}, p^\prime_{i+1} )$ at 
$y^\prime_i = x^\prime_{i+1}$.
\end{enumerate}
Paths are maximal pairing-connected sets. 
More precisely, a set of links $S$ of $w$ is a \textit{path} in $w$ if it is pairing-connected and there exists no pairing-connected set of links in  $w$, $S^\prime$, which is such that  $S^\prime  \supset S$  and $S^\prime \neq S$.
It is necessarily the case that all links belonging to the same path have the same colour.
For example, the configuration represented in Figure \ref{Fig:pairingexample} contains seven paths, 
 two of them are coloured by blue and five  by red.

\textbf{$\ell$-loops, double links, $\ell$-walks, segments, extremal links.}
We will now distinguish between different types of paths.
A path $S$ of $w$ is called \textit{loop of links}, or just \textit{$\ell$-loop}, if it is such that any link $(\{x,y\}, p) \in S$  is paired to another link at both its end-points and $|S| > 2$.
A path $S$ of $w$ is called \textit{double link},  if it is such that any link $(\{x,y\}, p) \in S$  is paired at both its end-points and $|S| =  2$.
It is necessarily the case that both links belonging to the double link are on the same  edge.
A path $S$ of $w$ is called \textit{walk of links}, or just \textit{$\ell$-walk}, if  $|S| >  1$ and there exist precisely two distinct links in $S$ such that each of them is unpaired at  one end-point and paired at the other end-point.
Such two links will be called \textit{extremal links} for the $\ell$-walk
or extremal links for $w$.
A path $S$  of $w$ is called \textit{segment} if 
$|S| = 1$.
If $S$ is a segment, then  the unique link which belongs to $S$ is unpaired at both its end points.
From these definitions it follows that any path is either a $\ell$-loop,  a double link,  a $\ell$-walk, or a segment. There are no other possibilities.
For example, the configuration $w$  in Figure \ref{Fig:pairingexample} is composed of one $\ell$-loop, two double links,
three segments, and one $\ell$-walk which is composed of two links.
The two links belonging to such $\ell$-walk are the only two extremal links 
of the configuration in Figure \ref{Fig:pairingexample}.


\textbf{Subsets of $\boldsymbol{\mathcal{W}}^1$.}
We now define several subsets of $\mathcal{W}^1 \subset \mathcal{W}$,
where the set $\mathcal{W}^1$ was defined in Definition \ref{def:definitionH}.
\begin{itemize}
\item Let $\mathcal{A}^\ell$ be the set of realisations 
$w \in \mathcal{W}^1$ such  that no path of $w$ is a $\ell$-walk or a segment. In other words,  each link of $w$ is paired at both its end-points. This also means that each path of $w \in \mathcal{A}^\ell$ is either a $\ell$-loop or a double link and, by definition of $H$, that  no link of $w \in \mathcal{A}^\ell$ is allowed to touch a virtual vertex.

\item 
For any $\{x,y\} \in \mathcal{E}_L$, let  $\mathcal{A}^s( \{x,y\}  )$ be the set of realisations $w \in \mathcal{W}^1$ such that  one (and not more than one) path of $w$ is a segment, such a segment  is composed of a link which is on the edge  $\{x,y\}$, and no connected component of $w$ is $\ell$-walk.
In other words, each link of $w$ except for the one which belongs to the segment is paired at both its end-points. 
A realisation $w \in \mathcal{A}^s( \{x,y\}  )$ is represented in
 Figure \ref{Fig:Pairingsexample1}-left.
\item 
For any pair of (directed, not necessarily distinct) edges $(x,q)$, $(y,r) \in \mathcal{E}_L$, let  
$\mathcal{A}^w( (x,q), (y,r)   )$  be the set of realisations $w \in \mathcal{W}^1$ such that the following three conditions hold true at the same time:  \textbf{(1)} there exists a unique $\ell$-walk in $w$
\textbf{(2)}  the two extremal links of such a walk are on the edges 
$\{x,q\}$, $\{y,r\}$ respectively, one of them is unpaired at $q$ and the other one is unpaired at $r$,
 \textbf{(3)} no path of $w$ is a segment.
These three conditions and the definition of $H$ imply that the following properties hold for any $w \in \mathcal{A}^w( (x,q), (y,r))$:
\begin{enumerate}[(i)]
\item The unique $\ell$-walk in $w$ has end-points $q$ and $r$, where $q$ and $r$ might coincide (see some examples in Figure \ref{Fig:Pairingsexample1}-right,   Figure \ref{Fig:Pairingsexample2},  Figure \ref{Fig:Pairingsexample3},
where $x$ is taken to be the origin and $d = 1$), 
\item There are precisely two extremal links, which are on the edges $\{x,q\}$ and $\{y,r\}$ respectively (it is possible that $\{x,q\} = \{y,r\}$), and all the remaining links are paired at both their end-points.
\item Any link of $w$ which is not extremal is on edges in $\E_L$.
\item Both $x$ and $y$  belong to the original torus, $q$ and $r$ might be original or virtual.
\end{enumerate}
\end{itemize}
In the statement of the next lemma, recall that $(x,y)$ represents an edge directed from $x$ to $y$, while $\{x,y\}$ represents a undirected edge.
\begin{lemma}\label{lemma:componentsidentification}
Under the same assumptions as in Proposition \ref{prop:moments},
for any $(x, q)$, $(y,r)$, $\{u,b\} \in \mathcal{E}_L$, we have that,
\begin{align}
\label{eq:formula1}
\mu_{L, N, \lambda, H} \, \Big ( \,  \mathcal{A}^\ell  \,   \Big ) \, &  = \, \mathbb{Y}^{\ell}_{L, N,  \lambda},  \\
\label{eq:formula2}
\mu_{L, N, \lambda, H} 
 \big ( \, \mathcal{A}^s(\{u, b\}) \,   \big ) \, &  =
\begin{cases}
\lambda \, N \, \mathbb{Y}^{\ell}_{L, N,  \lambda}  &   \mbox{ if $\{u,b\} \in\E_L$, }   \\
  \frac{\lambda}{2} \, N  \, \mathbb{Y}^{\ell}_{L, N,  \lambda}   &     \mbox{ if $\{u,b\} \in   \mathcal{E}_L \setminus   \E_L$,} 
\end{cases}  \\
\label{eq:formula3}
\mu_{L, N, \lambda, H} \, \Big ( \, 
\mathcal{A}^w \big ( (x,q), (y,r)   \big ) \,   \Big ) \, &  = \,  
\begin{cases}
 \lambda^2 \, N \, \mathbb{Y}_{L, N,  \lambda}(x,y) \,  & \mbox{ if $x, y \in \T_L$ and $(x,q) \neq (y,r)$,} \\
\frac{\lambda^2}{2} \, N  \, \mathbb{Y}_{L, N,  \lambda}(x,x) \,   & \mbox{ if $x, y \in \T_L$ and $(x,q)= (y,r)$,} \\
0  & \mbox{ if $\{x,y\}  \cap \T_L^{(2)} \neq \emptyset$.} \\
\end{cases}
\end{align}
\end{lemma}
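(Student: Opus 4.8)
The plan is to prove each of the three identities by setting up an explicit weight-preserving bijection between the relevant subset of $\mathcal{W}^1$ and a suitable set of lattice permutation configurations (elements of $\Omega^\ell$ or $\Omega_{x,y}$), and then checking that the measure $\mu_{L,N,\lambda,H}$ pushes forward to the weight $\lambda^{\mathcal{H}(\pi)}(\tfrac N2)^{\mathcal{L}(\pi)}$ up to the multiplicative constants appearing on the right-hand side. First I would make the key structural observation, justified by the choice of $H$ in Definition \ref{def:definitionH}: for $w\in\mathcal{W}^1$ every vertex of the original torus has $n_x\le 1$ and $u_x\le 2$, so the links paired at a vertex form at most one pair; this forces all $\ell$-loops and double links of $w$ to be vertex-self-avoiding and mutually vertex-disjoint, and confined to $\E_L$ (no link touching a virtual vertex is paired, so such links can only be extremal links of the unique $\ell$-walk). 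Hence, from the link/colour/pairing data of $w$ one reads off a well-defined lattice permutation $\pi(w)$ by collapsing each $\ell$-loop and double link to a directed loop (respectively double edge) of $\pi$, after choosing one of its two possible orientations, and—in the $\ell$-walk case—collapsing the unique $\ell$-walk to the directed walk from $q$ to $r$ in $\pi$.

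For \eqref{eq:formula1}: given $w\in\mathcal{A}^\ell$, every link is paired at both endpoints, so $w$ is a disjoint union of $\ell$-loops and double links; the map $w\mapsto \pi(w)$ is onto $\Omega^\ell$ with $\mathcal{M}(\pi)=0$, but not injective. I would count the fibre over a fixed $\pi$: each loop of $\pi$ of length $\ell$ lifts to an $\ell$-loop of links that occupies exactly the $\ell$ edges of the loop with one link on each edge, for which there are $N$ colour choices and exactly one pairing once the colour and the (already chosen, then forgotten) orientation are fixed — so $2\cdot\tfrac12$ from orientation-vs-direction cancels and the loop contributes a factor $N$; each double edge of $\pi$ lifts to a double link, i.e.\ two links on that edge, with $N$ colour choices and exactly one pairing, contributing a factor $N$; a loop also picks up the $\lambda^{m_e}/m_e!$ factors, giving $\prod\lambda = \lambda^{|\text{edges}|}$ and $1/m_e!=1$ throughout (all $m_e\in\{0,1\}$ on loops, $m_e=2$ on double edges gives $1/2!$, which must be absorbed by a combinatorial factor of $2$ from ordering the two links — I will need to be careful that these bookkeeping constants match $(\tfrac N2)^{\mathcal{L}(\pi)}$ and $\lambda^{\mathcal{H}(\pi)}$ exactly). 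Summing over $\pi\in\Omega^\ell$, $\mathcal{M}(\pi)=0$, reproduces $\mathbb{Y}^\ell_{L,N,\lambda}$ as in \eqref{eq:partitionlambda}; the monomer weight $\rho$ never enters because $\mathcal{M}(\pi)=0$ is forced, and indeed $\mathbb{Y}^\ell_{L,N,\lambda}$ in \eqref{eq:partitionlambda} sums over all $\pi\in\Omega^\ell$ including those with monomers — so I must also verify that allowing isolated vertices with no incident link (which carry weight $H_x=1$ and $\lambda^0=1$) correctly accounts for the $\pi$ with $\mathcal{M}(\pi)>0$, making the two sides agree term by term.

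For \eqref{eq:formula2} and \eqref{eq:formula3} I would reduce to \eqref{eq:formula1}. A realisation $w\in\mathcal{A}^s(\{u,b\})$ is obtained from a realisation in $\mathcal{A}^\ell$ (on the graph with the edge $\{u,b\}$ still present) by adding a single unpaired link on $\{u,b\}$; that link contributes a factor $\lambda$ from the edge weight and a factor $N$ from its free colour choice, while the vertex weights change by the factor $1$ (if $\{u,b\}\in\E_L$, since then $H_u=H_b=1$ whether or not the segment is present, as the segment link is unpaired and not on a vertical edge) or by $\tfrac12$ (if $\{u,b\}\notin\E_L$, i.e.\ it is a vertical edge, since then exactly one endpoint is original and gets $H_x=\tfrac12$ by the second case of \eqref{eq:weightoriginal2}); this yields the two cases of \eqref{eq:formula2}. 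For \eqref{eq:formula3}, a realisation $w\in\mathcal{A}^w((x,q),(y,r))$ consists of a disjoint family of $\ell$-loops and double links together with one $\ell$-walk whose two extremal links sit on $\{x,q\}$ and $\{y,r\}$; stripping off the two extremal links leaves an $\ell$-walk-with-paired-ends plus the loop part, and the collapsing map sends $w$ to the directed partition function term $\mathbb{Y}_{L,N,\lambda}(x,y)$ — one obtains a factor $\lambda^2$ from the two extra links, a factor $N$ from the single colour of the walk (the whole path is monochromatic, so only one free colour choice, as opposed to one per loop), and the orientation/direction count of the walk gives $1$ when $(x,q)\neq(y,r)$ and $\tfrac12$ when $(x,q)=(y,r)$ (in the latter case the walk is a single edge traversed, and the two extremal links on the same edge can be ordered in $2$ ways but the walk has only one direction, forcing the $\tfrac12$); when $\{x,y\}\cap\T_L^{(2)}\neq\emptyset$ the set is empty because property (iv) after the definition of $\mathcal{A}^w$ forces $x,y\in\T_L$ (a link touching a virtual vertex must be extremal and unpaired at the virtual vertex, so $q$ or $r$ may be virtual but not $x$ or $y$), giving $0$.

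The main obstacle I expect is the careful bookkeeping of the combinatorial constants — the interplay between the $1/m_e!$ normalisations in \eqref{eq:measure}, the orderings of links on an edge, the $N$ colour choices (one per path, hence per loop/walk, not per link), and the factor $2$ relating an unoriented loop of links to a directed loop in $\pi$ versus the factor $\tfrac N2$ in the definition \eqref{eq:partitionlambda}. I would handle this by first proving \eqref{eq:formula1} cleanly as the base case, identifying precisely which constant each structural feature contributes, and then obtaining \eqref{eq:formula2} and \eqref{eq:formula3} as perturbations of it (adding one or two links) so that only the incremental constants need to be tracked, not the whole product again. The degenerate sub-cases — double links sitting on a single edge ($m_e=2$), the walk collapsing to a self-loop at a vertex when $q=r$, and the edge coincidence $\{x,q\}=\{y,r\}$ — are where the factors of $\tfrac12$ live and will need separate, explicit verification.
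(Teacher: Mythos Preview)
Your overall strategy---map configurations in $\mathcal{W}^1$ to lattice permutations and track constants---is the paper's strategy. However, there is a real gap in your reduction of \eqref{eq:formula2} and \eqref{eq:formula3} to \eqref{eq:formula1}. You assert that the added segment ``contributes a factor $\lambda$ from the edge weight'', but this is only true after a non-trivial cancellation that you have not carried out: if the edge $\{u,b\}$ already carries $m$ links (because a loop or a double link of the underlying $\mathcal{A}^\ell$ configuration passes through it), then adding one more link changes the edge factor from $\lambda^m/m!$ to $\lambda^{m+1}/(m+1)!$, i.e.\ contributes $\lambda/(m+1)$, not $\lambda$. What saves the identity is that the segment can sit in any of the $m+1$ labelled positions on that edge, so the removal map $\mathcal{A}^s(\{u,b\})\to\mathcal{A}^\ell$ is $(m+1)$-to-one there, and the entropy factor $m+1$ exactly cancels the $1/(m+1)$. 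The paper makes this cancellation the centrepiece of the argument (organising it by conditioning on an intermediate undirected spanning subgraph $\sigma\in\Sigma$ and computing fibre size and per-configuration weight separately as functions of $\sigma$). The same issue recurs, with more cases, in \eqref{eq:formula3}: each extremal link may or may not share its edge with a non-extremal link of the walk, and the two extremal links may share an edge with each other (the sub-case $(y,r)=(q,x)$ with the inner walk also on $\{x,q\}$, which produces three links on a single edge and a multiplicity factor $6=3!$). Your list of ``degenerate sub-cases'' does not include these edge overlaps, and they are where the whole computation lives.

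A smaller point: your explanation of the $\tfrac12$ when $(x,q)=(y,r)$ is not correct. There is no orientation freedom for an $\ell$-walk (its two endpoints are distinguished by which extremal link is unpaired where); the $\tfrac12$ is simply the $1/2!$ from placing both extremal links on the same edge, with a single pairing at $x$ and no compensating multiplicity.
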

The proof of the lemma is postponed to Section \ref{sect:technicallemma} and is crucial.
We will now present the proof of Proposition \ref{prop:moments} given Lemma \ref{lemma:componentsidentification} .
\begin{proof}[\textbf{Proof of Proposition \ref{prop:moments} given Lemma \ref{lemma:componentsidentification}}]
Fix $L \in 2 \mathbb{N}$, $N \in \mathbb{N}_{>0}$, $\lambda > 0$ and a vector of real numbers
$\boldsymbol{h} = (h_x)_{x \in \mathcal{T}_L}$. We have that,
\begin{equation}\label{eq:consideration0}
\mathcal{Z}_{L, N, \lambda, H}( \varphi \boldsymbol{h} ) = \sum\limits_{ i= 0}^{\infty} \varphi^i \,  \mathcal{C}^{(i)}_{L, N, \lambda, H}(  \boldsymbol{h} ),
\end{equation}
where 
$$
\mathcal{C}^{(i)}_{L, N, \lambda, H}(  \boldsymbol{h} ) : = 
\mu_{L, N, \lambda, H } \Big ( \mathbbm{1}_{ \{  M = i   \} }  \big  ( \prod_{ z \in \mathcal{T}_L} h_z^{u_z}   \big ) \, \, \Big )
$$
and 
$
M : = \sum_{z \in \mathcal{T}_L} u_z
$
is the number of end-points of links which are unpaired in the whole graph.
First of all, note that 
\begin{equation}\label{eq:consideration1}
\forall i \in 2 \mathbb{N} + 1 \quad  \quad \mathcal{C}^{(i)}_{L, N, \lambda, H}(  \boldsymbol{h} ) = 0.
\end{equation}
since any path has either no link with unpaired end-points, or two links with precisely one unpaired end-point each, or one link with two unpaired end-points.
Thus, $M(w)$ is even for any $w \in \mathcal{W}^1$.
Moreover, note that,
\begin{equation}\label{eq:consideration2}
\mathcal{C}^{(0)}_{L, N, \lambda, H}(  \boldsymbol{h} ) = \mu_{L, N, \lambda, H } \Big (\, \mathcal{A}^\ell \,  \Big ) =\mathbb{Y}^\ell _{L, N, \lambda},
\end{equation}
where the first identity holds true since $w \in \{M = 0 \}$ if and only if each path of $w$ is a  $\ell$-loop or a double link and the second identity follows from  Lemma \ref{lemma:componentsidentification}.
Furthermore, note that $w \in \{M = 2 \}   \cap \mathcal{W}^1$ if and only if 
precisely one path of $w$ is a  a segment or a $\ell$-walk and all the remaining paths of $w$ are $\ell$-loops or double links. In the next expression, the first  term in the right-hand side corresponds to a sum over all possible edges on which the segment might be located, the second term in the  right-hand side corresponds to a sum over all (directed) edges on which the extremal links might be located (recall the definitions provided before the statement of Lemma \ref{lemma:componentsidentification}),
\begin{multline}\label{eq:consideration3}
\mathcal{C}^{(2)}_{L, N, \lambda, H}(  \boldsymbol{h} )  = \\ 
\sum\limits_{  \{x,y\}  \in \mathcal{E}_L  } \, \mu_{L, N, \lambda, H } \Big (  \mathcal{A}^s( \{x,y\}  )      \, \Big )  \, h_x \, h_y \,  
+  \sum\limits_{ \substack{ \{ (x,q), (y,r) \}  \subset \mathcal{E}_L}}
\mu_{L, N, \lambda, H } 
\Big (  \mathcal{A}^w \big ( (x,q), (y,r ) \big  )   \, \, \Big ) \, h_q \, h_r.
\end{multline}
Note that the second sum in the right-hand side is over all \textit{unordered} pairs of (not necessarily distinct) \textit{directed} edges.
Now we apply Lemma \ref{lemma:componentsidentification} and we re-write the second term in the right-hand side of the previous expression as follows,
\begin{multline}\label{eq:consideration4}
\frac{1}{2}  \sum\limits_{ \substack{x,y  \in \T_L  : \\ x \neq y}} \, 
\sum\limits_{ \substack{ q, r \in \T_L  : \\ \{x,q\}, \{y,r\} \in \mathcal{E}_L }} \, 
h_q \, h_r \, 
\mu_{L, N, \lambda, H } 
\Big (  \mathcal{A}^w \big ( (x,q), (y,r) \big  )   \Big ) \, \, + \, \, \\
\frac{1}{2} \,
\sum\limits_{x \in \T_L} \, 
 \sum\limits_{ \substack{ q, r \in \T_L  :  \\ \{ x,q\}, \{x,r\} \in \mathcal{E}_L, q\neq r } }
 h_q \, h_r \, 
\mu_{L, N, \lambda, H } 
\Big (  \mathcal{A}^w \big ( (x,q), (x,r) \big  )   \, \, \Big ) \\
 \, \, + \, \, 
\sum\limits_{x \in \T_L}
\sum\limits_{ \substack{ q \in \T_L : \\ \{x, q\} \in \mathcal{E}_L} }
 h^2_q \, 
\mu_{L, N, \lambda, H } 
\Big (  \mathcal{A}^w \big ( (x,q), (x,q) \big  )   \, \, \Big )  \\
 = \frac{1}{2} \, N \, \lambda^2 \,  \sum\limits_{x,y  \in \T_L}    \mathbb{Y}_{L, N, \lambda}(x,y) \, \big (   \sum\limits_{\substack{ q \in \mathcal{T}_L : \\ \{ x,q\} \in \mathcal{E}_L}  }  h_q  \big )   
 \big (   \sum\limits_{\substack{r \in \mathcal{T}_L : \\ \{ y,r\} \in \mathcal{E}_L}  }  h_r \big ) 
\end{multline}
where the factor one-half in the first two terms is a multiplicity factor due to the fact that we sum over  \textit{ordered} pairs of sites. 
By replacing (\ref{eq:consideration4}) with the second term in right-hand side of (\ref{eq:consideration3}), applying Lemma \ref{lemma:componentsidentification} for the first  term in the right-hand side of (\ref{eq:consideration3}), using (\ref{eq:consideration1}) and 
 (\ref{eq:consideration2}), we conclude the proof of the proposition. 
\end{proof}

\subsubsection{Proof of Lemma \ref{lemma:componentsidentification}}
\label{sect:technicallemma}
In this section we prove Lemma \ref{lemma:componentsidentification},
which is a fundamental step in the proof of the Polynomial expansion.
The proof of (\ref{eq:formula1}) is the easiest. Indeed,  our choice of the weight function $H$ imposes that any configuration in the set $\mathcal{A}^\ell$ consists of mutually-vertex-disjoint $\ell$-loops and double links which lie entirely in the original torus and these can be identified with  loops and  double edges of the configurations in $\Omega^\ell$ taking the same positions.
The proofs of (\ref{eq:formula2}) and  (\ref{eq:formula3}) are  more elaborate.  The proof requires defining a map which maps sets of configurations in $\mathcal{A}^s(\{x,y\})$ to sets of configurations in $\Omega^\ell$ and sets of configurations in $\mathcal{A}^w(\{(x,q),(y,r)\})$ to sets of configurations in $\Omega_{x,y}$ and consists of a comparison of the weights taken by such sets.
Informally the map works as follows: 
For the proof of (\ref{eq:formula2}), we take any configuration in $\mathcal{A}^s(\{x,y\})$ and `remove' the link  which is unpaired at both its end-points.
Such a removal has a cost $\lambda$ (whose corresponding factor appears in the right-hand side of (\ref{eq:formula2})) and leads to a configuration in $\mathcal{A}^\ell$. After that, we compare the sets of configurations $\mathcal{A}^\ell$ obtained after such a  `removal' 
with sets of configurations in $\Omega^\ell$ similarly to the previous case.
For the proof of (\ref{eq:formula3}) we `remove' from any configuration in $\mathcal{A}^w(\{(x,q),(y,r)\})$ the two extremal links (which, by definition, are on $\{x,q\}$ and  on $\{y,r\}$ respectively and are unpaired at $q$ and $r$ respectively) paying a cost $\lambda^2$
(which appears in the right-hand side of (\ref{eq:formula3}))
 and obtain a configuration with a $\ell$-walk  having end-points $x$ and $y$ and possibly double links and $\ell$-loops, with all such objects being vertex-self-avoiding, mutually-vertex-self-avoiding and lying entirely in the original torus by our choice $U = H$. Such objects ($\ell$-walk,  double links and $\ell$-loops) can be identified with corresponding objects of the configurations in  $\Omega_{x,y}$  (walk, double edges and loops respectively) taking the same positions. Such an identification allows the comparison of the weights of the 
 set $\mathcal{A}^w(\{(x,q),(y,r)\})$  under $\mu$ and the weights taken by the configurations in $\Omega_{x,y}$ in partition function $\mathbb{Y}_{L, N, \lambda}(x,y)$.
 It is important for such a comparison to ensure that the `removal' of the links does not leave a `hole':  For this reason the definition of the weight function $H$ which we provided implies that 
the $\ell$-walk is not entirely vertex-self-avoiding, namely at the vertices where its two extremal links are unpaired, $q$ and $r$,
 it might `touch' itself or other paths. Here by `no hole' we mean that, when the two extremal links are `removed', one obtains configurations whose paths  are `free' to use the vertices  which are touched by the  links which get removed.
A further technical aspect in the proofs of (\ref{eq:formula2}) and (\ref{eq:formula2}) is that such a `removal'  is a many-to-one map, since the links which gets `removed' might occupy different positions on the same edge and the `removal'  maps several input configurations with different positions of such links to the same output.
For this reason we  need to compute the factor  corresponding to the number of such possible positions, which also depends on the pairing of the other links on that edge.  Fortunately for us, the factor $\frac{1}{m_e!}$ 
in the definition of the measure $\mu_{L, N, \lambda, U}$ assigns a higher weight to the configuration obtained after the `removal'  and such a energy gain matches the corresponding entropy loss \textit{perfectly}, giving a total factor  which equals precisely  one.

\begin{proof}[\textbf{Proof of Lemma \ref{lemma:componentsidentification}}]
For the formal proof it will be  convenient dealing with undirected  sub-graphs of the torus and for this reason we introduce the set $\Sigma$,
which  can be viewed as  an `intermediate object' between the sets  $\mathcal{W}^1$ and $\Omega \cup \Omega^\ell$, whose respective subsets must be compared.

\textbf{\textit{Definition of the set  $\Sigma$.}}
Let $\Sigma$ be the set of spanning sub-graphs of $(\T_L, \E_L)$ such that 
every vertex has degree zero, one or two. Any connected component of  $\sigma \in \Sigma$ is called \textit{monomer} if it consists of a single vertex, \textit{isolated edge} if it consists of two vertices connected by one edge, \textit{loop} if the set of its edges is isomorphic to a simple closed curve in $\mathbb{R}^d$, \textit{walk} if the set of its edges is isomorphic to an open simple curve in $\mathbb{R}^d$.
Thus, an isolated edge is also a walk.
For $x \neq y$, let $\Sigma_{x,y}$ be defined as the  set of graphs $\sigma \in \Sigma$ such that there exists a  walk with end-points   $x$ and $y$ and any other connected component is a monomer, a isolated edge or a loop,
let $\Sigma^\ell$ be defined as  the  set of graphs $\sigma \in \Sigma$ such that any connected component is  a monomer, a isolated edge or a loop,
let $\Sigma_{x,x}$ be the set of graphs $\sigma \in \Sigma^\ell$ such that $x$ is monomer.
For any $\sigma \in \Sigma$, let $\mathcal{L}(\sigma)$ be the  number of connected components in $\sigma$ which are not monomers (by a slight abuse of notation, since we already defined the related quantity $\mathcal{L}(\pi)$ in the introduction) let $\mathcal{D}(\sigma)$ be the  number of isolated edges in $\sigma$ 
 let 
$\mathcal{D}^\prime(\sigma)$ 
  be the  number of isolated edges in $\sigma$ which do not contain the origin,
let $| \sigma |$ be the  number of edges in $\sigma$.
Recall the definitions of the 
partition functions (\ref{eq:partitionlambda}) parametrised by $\lambda$.
We have that, for any $y \in \T_L \setminus \{o\}$,
\begin{align}
\label{eq:relationclosedpartition}
\mathbb{Y}_{L, N, \lambda}^{\ell} &  =  \sum\limits_{ \sigma \in \Sigma^\ell}  \, \,  \lambda^{ |\sigma|}  \, \, N^{ \mathcal{L}(\sigma)} \,  \,  (\frac{\lambda}{2})^{ \mathcal{D}(\sigma)}, \\
\label{eq:relationdirectedpartition}
 N \, \mathbb{Y}_{L, N, \lambda}(o,y) &  =    \sum\limits_{ \sigma \in \Sigma_{o,y}}  
 \lambda^{ |\sigma|}  \, \, N^{ \mathcal{L}(\sigma)} \,  (\frac{\lambda}{2})^{ \mathcal{D}^\prime(\sigma)} \\
 \label{eq:relationdirectedpartition2}
\mathbb{Y}_{L, N, \lambda}(o,o) &  =    \sum\limits_{ \sigma \in \Sigma_{o,y}}  
 \lambda^{ |\sigma|}  \, \, N^{ \mathcal{L}(\sigma)} \,  (\frac{\lambda}{2})^{ \mathcal{D}^\prime(\sigma)}.
\end{align}
To see why the previous relations hold true, note that there is an obvious correspondence between the elements $\pi \in \Omega^{\ell}$ and the elements $\sigma \in \Sigma^{\ell}$ and between the elements $\pi \in \Omega_{o,x}$ and the elements $\sigma \in \Sigma_{o,x}$.
Indeed, for each $\pi$, we obtain a unique element $\sigma$ which is associated to $\pi$ by replacing any double edge, directed loop or directed walk by a isolated edge,  undirected loop or undirected walk respectively which is composed of the same edges and sites. 
We deduce  (\ref{eq:relationclosedpartition}) and  
 (\ref{eq:relationdirectedpartition})  from the definitions
 (\ref{eq:partitionlambda})
 considering that directed loops have two possible orientations and that double edges in $\pi$ consist of two (directed) edges while the isolated edges in $\sigma$  just of one edge.
Note  that the factor $N$ in the left-hand side of (\ref{eq:relationclosedpartition}) 
is due to the fact that $\mathcal{L}(\pi)$, which was defined in Section \ref{sect:rigorousresults}, does not count the walk, while $\mathcal{L}(\sigma)$ 
counts the number of connected components which are not monomers and thus also the walk.
Finally, note that in (\ref{eq:relationdirectedpartition2})  and (\ref{eq:relationdirectedpartition2}) we have $\mathcal{D}^\prime$ in place of 
$\mathcal{D}$ since, if the walk consists of just one edge, we don't want assign to it a factor $\frac{\lambda}{2}$.
 Now that the partition functions have been defined in terms of sums over elements of $\Sigma$, we can proceed with the comparison between the elements of $\mathcal{W}^1$ and the elements 
of $\Sigma$. This comparison will require introducing a map between such sets and studying its multiplicity properties.

Below we will keep adopting the following terminology: \textit{double links}, \textit{$\ell$-loops}, \textit{$\ell$-walks},  and \textit{segments} for the paths of the realisations $w \in \mathcal{W}^1$; \textit{isolated edges}, \textit{loops}, \textit{walks} and \textit{monomers} for the connected components of the realisation $\sigma \in \Sigma$.
Moreover, we write that $\{x,y\} \in \sigma$
if $\{x,y\}$ belongs to the edge set of $\sigma \in \Sigma$.

\textbf{\textit{Definition and properties of the map $Q : \mathcal{W}^1 \mapsto \Sigma$.}}
For any $w \in \mathcal{W}^1$,  let $\mathcal{Q}(w)$ be the set of edges $\{x,y\} \in \E_L$ such that there exists  a link on $\{x,y\}$ in $w$ which is paired both at $x$ and $y$. We define a  map $Q$ which associates to each realisation $w \in \mathcal{W}^1$ the realisation $Q(w)  := (\T_L, \mathcal{Q}(w))$.
To begin note that, 
\begin{equation}\label{eq:containedin}
\forall w \in \mathcal{W}^1, \quad  Q(w) \in \Sigma.
\end{equation}
This holds true since, by definition of  $\mathcal{W}^1$, for each realisation $w \in \mathcal{W}^1$,  
each vertex of $Q(w)$ has degree zero, one or two.
For  any $\sigma \in \Sigma$, define  the set $Q^{-1}(\sigma) : = \{ w \in \mathcal{W}^1 \, : \,   Q(w) = \sigma  \}$. From the definition of the map $Q$ we deduce that,
for any pair of graphs $\sigma_1, \sigma_2 \in \Sigma$, 
\begin{equation}\label{eq:disjointness}
\sigma_1 \neq \sigma_2 \implies 
Q^{-1}(\sigma_1) \cap  Q^{-1}(\sigma_2) = \emptyset.
\end{equation}
Note that  for any $w \in \mathcal{W}^1$, 
a loop is present in  $Q(w)$ if and only if a $\ell$-loop
with precisely one link located on each edge of the loop 
is present in $w$. Moreover, note that a isolated edge is present in $Q(w)$ if and only if a double link  whose two links are on that  edge is present in $w$. 
Moreover, suppose that $x \neq y$. Note that  for any  $w \in \mathcal{W}^1$,  a walk with end-points $x$ and $y$ is present in $Q(w)$ if and only if a $\ell$-walk with  extremal links $(x,q)$, $(y,r)$ for some $q,r \in \mathcal{T}_L$ and with precisely a non-extremal link on each edge of that walk is present in $w$.
Finally, suppose that  $x = y$. Note that, by definition of $H$, for any $w \in \mathcal{W}^1$,  a $\ell$-walk with 
extremal links $(x,q)$, $(x,r)$ can only consist 
 of two links which are paired to each other at $x$ and which are both extremal in $w$.
Thus, $Q(w)$ has a monomer at $x=y$ if and only if either a $\ell$-walk 
composed of just two links paired at $x$ and on the edges $\{x,q\}$, $\{x,r\}$ 
 for some $q,r \in \mathcal{T}_L$ (with possibly $q=r$) is
present in $w$ or if no link of $w$ is paired at $x = y$.
See also Figures \ref{Fig:Pairingsexample1}, \ref{Fig:Pairingsexample2}, \ref{Fig:Pairingsexample3}  for  examples.
From all these considerations we deduce that,
\begin{equation}\label{eq:goalmap0}
\forall w \in \mathcal{A}^\ell, \quad Q(w) \in \Sigma^\ell,
\end{equation}
\begin{equation}\label{eq:goalmap1}
\forall \{x,y\} \in \mathcal{E}_L \quad \forall w \in \mathcal{A}^s( \{x,y\} ), \quad Q(w) \in \Sigma^{\ell},
\end{equation}
\begin{equation}\label{eq:goalmap2}
\forall (x,q), (y,r)  \in \mathcal{E}_L \, : \,   x, y \in \T_L,  \quad \forall w \in \mathcal{A}^w( \, (x,q), (y,r)  \,  \big ), \quad Q(w) \in \Sigma_{x, y}.
\end{equation}
Moreover, by definition of $\mathcal{W}^1$ we also have that,
\begin{equation}\label{eq:goalmap3}
\forall (x,q), (y,r)  \in \mathcal{E}_L \, : \,  \{ x, y \} \cap \T^{(2)}_L \neq \emptyset,     \quad  \mathcal{A}^w( \, (x,q), (y,r)  \,  \big ) = \emptyset.
\end{equation}
We will now prove all the claims in the statement of Lemma \ref{lemma:componentsidentification}
one by one using such properties.

\textbf{\textit{Proof of (\ref{eq:formula1}).}}
From (\ref{eq:goalmap0}) and from the considerations made in the paragraph before (\ref{eq:goalmap0}) we deduce that,
\begin{equation}\label{eq:proofZ0}
\forall \sigma \in \Sigma^\ell,  \quad \quad 
\mu_{L, N, \lambda, H} \Big ( \mathcal{A}^\ell   \cap   \{ Q(w) = \sigma \}       \Big )  = { \Big   (\frac{1}{2} \Big )}^{\mathcal{D}(\sigma)} \, \, \lambda^{|\sigma| + \mathcal{D}(\sigma)} \, \, N^{ \mathcal{L}(\sigma)}.
\end{equation}
The factor $N^{\mathcal{L}(\sigma)}$ above takes into account for the fact that
if  $w^\prime$  is obtained from $w$ by changing the colour of all the links belonging to  the same path, then $Q(w) = Q(w^\prime)$,
the term
$|\sigma| + \mathcal{D}(\sigma)$ corresponds to the  number of links in each configuration 
$w \in \mathcal{A}^\ell$ such that $Q(w) = \sigma$, and the factor 
$(1/2)^{\mathcal{D}(\sigma)}$ comes from the term $\frac{1}{m_e!}$ in the definition (\ref{eq:measure}).
Now note that, 
\begin{align*}
\mu_{L, N, \lambda, H} \big (  \mathcal{A}^\ell   \big ) 
& = 
\sum\limits_{  \sigma \in \Sigma^\ell}
\mu_{L, N, \lambda, H}  \Big (  \mathcal{A}^\ell  \, \cap  \,  \{ Q(w) = \sigma \}  \Big ) \\
& = 
\sum\limits_{  \sigma \in \Sigma^\ell}   { \Big   (\frac{\lambda}{2} \Big )}^{\mathcal{D}(\sigma)} \, \, \lambda^{|\sigma|} \, \, N^{ \mathcal{L}(\sigma)} =
\mathbb{Y}^{\ell}_{L, N, \lambda}.
\end{align*}
For the first identity we used (\ref{eq:disjointness}) and (\ref{eq:goalmap0}), for the second identity we used (\ref{eq:relationclosedpartition}).
This concludes the proof of (\ref{eq:formula1}).

\textbf{\textit{Proof of (\ref{eq:formula2}).}}
Recall that,
if $\{x,y\}$ belongs to the edge set of $\sigma \in \Sigma$, we write $\{x,y\} \in \sigma$.
In the whole proof we fix an arbitrary undirected edge $\{x,y\} \in \mathcal{E}_L$. To begin, we claim that for any  $\sigma \in \Sigma^\ell$, 
\begin{equation}\label{eq:cons1}
\Big |   \big \{    w \in \mathcal{A}^s( \{x,y\}  )   \, : \, Q(w) = \sigma \,  \big \}   \,      \Big | =
\begin{cases}
3 \,  \, N^{ \mathcal{L}(\sigma)  + 1 } \quad & \mbox{ if $\sigma$ has a isolated edge at $\{x,y\}$ and $\{x,y\} \in \E_L$} \\
2 \, \, N^{ \mathcal{L}(\sigma) +1  } \quad  & \mbox{ if $\{x,y\}$ belongs to a loop of $\sigma$ and $\{x,y\} \in \E_L$} \\
 1 \, \, N^{ \mathcal{L}(\sigma) +1 } \quad  & \mbox{ if $\{x,y\} \notin \sigma$.} \\
0 \quad  & \mbox{ otherwise.} \\
\end{cases}
\end{equation}
We now explain prove (\ref{eq:cons1}), starting from the \textit{fourth case }of  (\ref{eq:cons1}) (`otherwise'), which is when   $\{x,y\} \in \mathcal{E}_L \setminus \E_L$  and $\{x,y\}$ belongs to a loop or a isolated edge of $\sigma$.
In this case
 $\mathcal{A}^w( \{x,y\} ) \cap \{Q(w) = \sigma\} = \emptyset$, since 
for any $w \in \mathcal{W}^1$, no 
  double link or $\ell$-loop  is allowed to \textit{touch} a virtual vertex.
This explains why we get zero in the fourth case of (\ref{eq:cons1}).

We now consider the first three cases. To begin, note that the factor $N^{\mathcal{L}(\sigma)+1}$ in the first three cases takes into account for the fact that if  $w^\prime$  is obtained from $w$ by changing the colour of all the links belonging to  the same path, then $Q(w) = Q(w^\prime)$.
 \begin{figure}
  \centering
    \includegraphics[width=\textwidth]{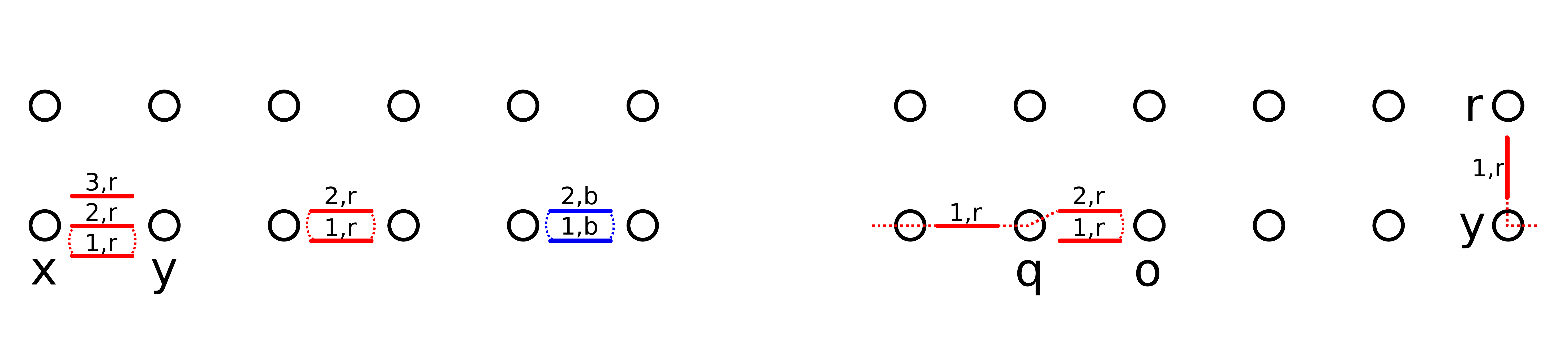}
      \caption{
      Two copies of the vertex set of the graph $(\mathcal{T}_L, \mathcal{E}_L)$ when $d = 1$, with
       $\mathcal{T}_L =  \{ -2, \dots, 3 \} \times \{1,2\}$.
On each copy a realisation $w \in \mathcal{W}^1$ is represented, 
each link has two possible colours, red or blue, and a dotted line connects end-points of paired links.
\textit{Left:}
 A realisation in $w \in \mathcal{A}^s \big ( \{  x,y \}  \big )$, 
  such that $Q(w) \in \Sigma$ consists of three isolated edges and six monomers.
   \textit{Right:} A  realisation  $w \in \mathcal{A}^w \big ( (o,q), (y,r)  \big )$ such that $Q(w) \in \Sigma$ consists of one walk composed of two edges and eight monomers. 
  }\label{Fig:Pairingsexample1}
\end{figure}
The factors $3$, $2$ or $1$ in the first three cases above take into account for the number of possible labels of the link belonging to the  segment and which is on $\{x,y\}$. We explain this starting from the first case.
In the \textit{first case}, when $\sigma$ has a isolated edge at $\{x,y\}$,  each configuration $w\in Q^{-1}(\sigma) \cap \mathcal{A}^s( \{x,y\}   )$ has three links on $\{x,y\}$, where two of such three links are paired to each other and compose a double link, while the third link  is unpaired at both its end-points. Such an unpaired link might be the first, the second or the third link  on $\{x,y\}$. This situation is represented for example on the left of  Figure \ref{Fig:Pairingsexample1}.
Thus, the factor $3$ takes into account for the fact that the unpaired link might have three distinct possible labels (in other words, it might occupy three distinct possible positions on $ \{x,y\}$), with each label corresponding to a distinct configuration $w$ such that $Q(w) = \sigma$.
In the \textit{second case},  when  $\{x,y\}$ belongs to a loop of $\sigma$,  each $w \in Q^{-1}(\sigma) \cap \mathcal{A}^s( \{x,y\})$ 
has  two links on $\{x,y\}$, with one link belonging to the segment and thus being unpaired at both its end-points and the other link being paired both at $x$ and $y$. Thus, the factor two takes into account for the fact that there are two choices for which link on $\{x,y\}$ belongs to the segment and which link on $\{x,y\}$  is paired at both its end-points.
Finally, in  the \textit{third case} we have no entropy factor.
From these considerations and from the definition  of $\mu$,  which is given in
Definition \ref{def:measure},  and the definition of $H$, which is given in Definition \ref{def:definitionH},
 we also deduce that, for any
$\sigma \in \Sigma^\ell$, for any  $w \in  \mathcal{A}^s(\{x,y\} )$ such that $Q(w) = \sigma$,
\begin{multline}\label{eq:cons2}
\mu_{L, N, \lambda, H} (w) = 
\begin{cases}
\frac{1}{3!} \,  { \frac{1}{2^{\mathcal{D}(\sigma)-1}} } \,  \lambda^{|\sigma| + \mathcal{D}(\sigma) + 1} \quad &  \mbox{ if $\sigma$ has a isolated edge at $\{x,y\}$ and $\{x,y\} \in \E_L$,} \\
 \frac{1}{2} \,   { \frac{1}{2^{\mathcal{D}(\sigma)}} } \, \, \lambda^{|\sigma| + \mathcal{D}(\sigma)+1}  \quad &  \mbox{ if $\{x,y\}$ belongs to a loop of $\sigma$ and $\{x,y\} \in \E_L$,} \\
  \frac{1}{2} \,    {\frac{1}{2^{\mathcal{D}(\sigma)}} } \, \, \lambda^{|\sigma| + \mathcal{D}(\sigma)+1}   \quad  & \mbox{ if $\{x,y\} \not\in \sigma$ and $\{x,y\} \in \mathcal{E}_L \setminus \E_L$,} \\
    { \frac{1}{2^{\mathcal{D}(\sigma)} } }\, \, \lambda^{|\sigma| + \mathcal{D}(\sigma) + 1}   \quad  & \mbox{ if $\{x,y\} \not\in \sigma$ and $\{x,y\} \in \E_L$.} \\
\end{cases}
\end{multline}
In all the cases above, the last factor corresponds to the weight of the links, whose number is $|\sigma| + \mathcal{D}(\sigma) + 1$.
The first two factors in the first two cases, the second factor in the third case and the first factor in the last case 
follows from the term $\frac{1}{m_e!}$ in the definition of $\mu$,
the first factor $\frac{1}{2}$ in the third case comes from the fact that the weight function $H_x$, $x \in \T_L$ assigns a factor $\frac{1}{2}$ whenever it `sees' a link on $\{x,x + \boldsymbol{e}_{d+1}\}$
which is unpaired at $x$ and this can only happen when 
such a link is unpaired at $x$ and at 
$\{x,x + \boldsymbol{e}_{d+1}\}$, thus being a segment.
From (\ref{eq:cons1}) and (\ref{eq:cons2}) we deduce that,
for any $w \in \mathcal{A}^s( \{x,y\})$, 
for any $\sigma \in \Sigma^\ell$,
\begin{equation}\label{eq:proofZ1}
\mu_{L, N, \lambda, H }\Big ( \mathcal{A}^s(\{x,y\})   \,\cap  \,   \{ Q(w) = \sigma \}    \, \,   \big )   =  
\begin{cases}
\lambda \,  N \, \, \lambda^{|\sigma| + \mathcal{D}(\sigma)} \, \, 
(\frac{1}{2})^{\mathcal{D}(\sigma) }
\, \, N^{ \mathcal{L}(\sigma)} & \mbox{ if $\{x,y\} \in \E_L$} \\
\frac{\lambda}{2} \, N \,  \, \, \lambda^{|\sigma| + \mathcal{D}(\sigma)} \, \,(\frac{1}{2})^{\mathcal{D}(\sigma) } \, \,  N^{ \mathcal{L}(\sigma)} & \mbox{ if  $\{x,y\} \in \mathcal{E}_L \setminus \E_L$.}
\end{cases}
\end{equation}
From (\ref{eq:disjointness}), (\ref{eq:goalmap1}),
(\ref{eq:cons2}), and (\ref{eq:proofZ1}) we deduce that, when $\{x,y\} \in \E_L$,
\begin{align*}
\mu_{L, N, \lambda, H } \Big (  \mathcal{A}^s(\{x,y\})  \Big ) & = 
 \sum\limits_{  \sigma \in \Sigma^\ell}
\mu_{L, N, \lambda, H  } \Big (\mathcal{A}^s( \{x,y\}   )  \cap   \{ Q(w) = \sigma \}  \Big ) \\
& = 
\lambda \, N  \,    \sum\limits_{  \sigma \in \Sigma^\ell}   { \Big   (\frac{\lambda}{2} \Big )}^{\mathcal{D}(\sigma)} \, \, \lambda^{|\sigma|} \, \, N^{ \mathcal{L}(\sigma)} = \, \lambda \, N \,
\mathbb{Y}^\ell_{L,  N, \lambda,}
\end{align*}
and that the same holds true with a factor one-half in front of the two last terms when 
 $\{x,y\} \in \mathcal{E}_L \setminus \E_L$.
 
\textbf{\textit{Proof of (\ref{eq:formula3}) when 
 $\{x,y\} \cap  \T_L^{(2)}  \neq \emptyset $.}}
 In this case, the proof follows immediately from (\ref{eq:goalmap3}).

 \textbf{\textit{Proof of (\ref{eq:formula3}) when $\{x,y\} \subset \T_L$}.}
Suppose that $\{x,y\} \subset \T_L$ (possibly $x = y$).
Without loss of generality (by translation invariance) fix  $x = o$.
From (\ref{eq:goalmap2}) and from the properties of the map $Q$
we claim that, under these assumptions,
for any $y \in \T_L$ and  $\sigma \in \Sigma_{o,y}$, we have that,
\begin{multline}\label{eq:cons3}
\Big |   \big \{    w \in \mathcal{A}^w( (o,q),  (y,r)  )   \, : \, Q(w) = \sigma \,   \big \}   \,      \Big | =  \\
\begin{cases}
  2^{ \mathbbm{1}   \{ \{o,q\} \in \sigma   \} } \, \, 
 2^{    \mathbbm{1}   \{ \{y,r\} \in \sigma   \} } \, \,  N^{ \mathcal{L}(\sigma)  }  & \mbox{ if $y \neq o$ \mbox{ and } $(y,r) \neq (q,o)$ } \\
 6 \,   N^{\mathcal{L}(\sigma)}  \, \,    &  \mbox{ if $y \neq o$, $(y,r) = (q,o)$ and $\{o,y\} \in \sigma$} \\
 2  \,  N^{\mathcal{L}(\sigma) } \, \,       & \mbox{ if $y \neq o$, $(y,r) = (q,o)$ and $\{o,y\} \not\in \sigma$,} \\
  N^{\mathcal{L}{(\sigma)} + 1 } \,  & \mbox{ if $y=o$.}
\end{cases}
\end{multline}
We now explain (\ref{eq:cons3}).
The  factors $N^{\mathcal{L}(\sigma)} $ and $N^{\mathcal{L}(\sigma)+1}$
in all the cases  above take into account for the fact that
if  $w^\prime$  is obtained from $w$ by changing the colour of all the links belonging to  the same path, then $Q(w) = Q(w^\prime)$.
We now explain the remaining factors considering case by case.
 \begin{figure}
  \centering
    \includegraphics[width=\textwidth]{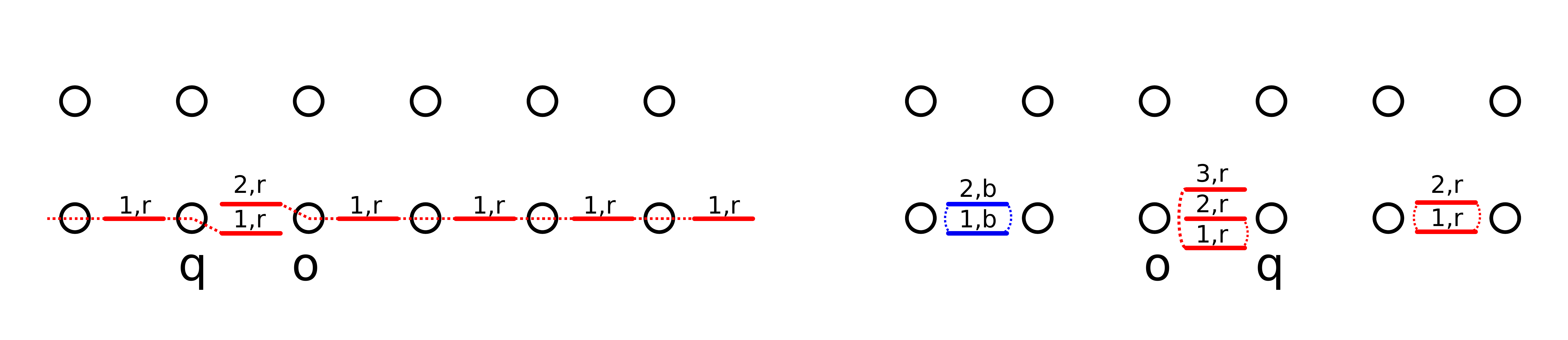}
      \caption{
     Same setting as in Figure \ref{Fig:Pairingsexample1}.      
\textit{Right:}
A realisation $w \in \mathcal{A}^w((o,q), (q,o))$
such that $\{ o,q \} \not\in Q(w)$ and $Q(w)$ consists of one walk composed of  five edges and six monomers.
      \textit{Left:} A realisation $w \in \mathcal{A}^w((o,q), (q,o))$
such that $\{ o,q  \} \in Q(w)$ and such that $Q(w) \in \Sigma_{o,q}$ consists of three isolated edges.  }\label{Fig:Pairingsexample2}
\end{figure}
\begin{itemize}
\item Let us explain the first case: $y \neq o$, and $(y,r) \neq (q,o)$.
Note that, from the properties of the map $Q$, it follows that 
for any $w \in \mathcal{A}^w( (o,q),  (y,r)  )$ such that 
 $Q(w) = \sigma$, $\{o,q\} \in \sigma$ if and only if 
two links of the unique $\ell$-walk in $w$ are on  $\{o,q\}$,
one of which is extremal.
Note also that the same claim holds true 
if we replace $\{o,q\}$ by 
 $\{y,r\}$.
Thus, the factors $2^{ \mathbbm{1}   \{ \{o,q\} \in \sigma   \} }$
and $2^{ \mathbbm{1}   \{ \{y,r\} \in \sigma   \} }$
account for the fact that there are two possibilities for choosing which of the two link is the extremal one (the other link belongs to the $\ell$-walk, but it is not extremal). For example, if  $w^1$  is the configuration in the right of Figure \ref{Fig:Pairingsexample1}, $\sigma$ is such that $Q(w^1) = \sigma$, 
and $w^2$ is the configuration which is obtained from $w^1$ by exchanging 
the pairing at the vertex $q$ in such a way that the link $(\{q,o  \}, 1)$
is paired at $q$ to  the link $(\{ q-\boldsymbol{e}_1, q  \}, 1)$
and $(\{q,o  \}, 2)$ is unpaired at $q$,
then also $Q(w^2) = \sigma$.
From these considerations we also deduce that, if 
$y \neq o$, and $(y,r) \neq (q,o)$, for any 
$\sigma \in \Sigma_{o,y}$ and 
$w \in \mathcal{A}( (o,q), (y,r)   )$ such that $ Q(w) = \sigma$,
\begin{equation}\label{eq:consA}
\mu_{L, N, \lambda, H}(w) = 
\frac{1}{2^{\mathbbm{1}   \{ \{o,q\} \in \sigma\} + \mathbbm{1}   \{ \{y,r\} \in \sigma   \}}}
 \frac{1}{2^{   \mathcal{D}^\prime(\sigma) }}  \, \, 
\lambda^2 \, \,  
\lambda^{ | \sigma | + \mathcal{D}^\prime(\sigma)}
\end{equation}
where the first and the second  factor follows from the term $\frac{1}{m_e!}$ in the definition of $\mu$,  the factor $\lambda^2$ corresponds to the weight of the two extremal links, and the last factor corresponds to the weight of all the remaining links.

\item Let us explain the second case: $y \neq o$, $(y,r) = (q,o)$ and $
\{o,q\} \in \sigma$. 
In this case, any $w \in \mathcal{A}^w( (o,q),  (q,o))$ is such that the $\ell$-walk consists of three links which are  on $\{o,q\}$
and there are precisely three links on $\{o,q\}$.
Thus, one link of the $\ell$-walk must be paired at both its end-points to the two other links of the $\ell$-walk, while the two remaining links are paired at  one end-point and unpaired at the other end-point.
An example of such configuration is represented in  Figure \ref{Fig:Pairingsexample2} - right.
The factor six in the right-hand side of (\ref{eq:cons3}) 
accounts for the fact that there are three distinct possibilities 
for choosing which of such three links is paired at both end-points
and, once this has been chosen, there are two possibilities for choosing which of the two remaining links is paired at $o$ and unpaired at $q$.
From these considerations we also deduce that, 
for any $\sigma \in \Sigma_{o,q}$, such that $\{o, q\} \in \E_L$ and
$\{o,q\} \in \sigma$,
for any $w \in \mathcal{A}^w( (o,q),  (q,o))$ such that $Q(w) = \sigma$,
\begin{equation}\label{eq:consB}
\mu_{L, N, \lambda, H}(w) = 
\frac{1}{3!} \, 
\frac{1}{2^{   \mathcal{D}^\prime(\sigma) }}  \, \, 
\lambda^2 \, \,  
\lambda^{ | \sigma | + \mathcal{D}^\prime(\sigma)}
\end{equation}
where the first and the second  factor follows from the term $\frac{1}{m_e!}$ in the definition of $\mu$,  the factor $\lambda^2$ corresponds to the weight of the two extremal links of the $\ell$-walk, 
and the last factor corresponds to the weight of all the remaining links.

\item Let us explain the third case: $y \neq o$, $(y,r) = (q,o)$ and $
\{o,q\} \notin \sigma$. 
In this case,  any $w \in \mathcal{A}^w( (o,q),  (q,o))$ is such that two links are on $\{o,q\}$, where one of them is unpaired  at $o$ and is paired to another link of the walk at $q$, while the second one is unpaired at $q$ and it is paired to another link of the walk at $o$. 
An example of such configuration is represented in Figure \ref{Fig:Pairingsexample2} - left.
The factor $2$ in the right-hand side of (\ref{eq:cons3}) accounts for the fact that there are two  possibilities for choosing which of the two links is paired at $o$ and which at $q$.
From these considerations we also deduce that, 
for any $\sigma \in \Sigma_{o,q}$, such that $\{o, q\} \in \E_L$ and
$\{o,q\} \in \sigma$,
for any $w \in \mathcal{A}^w( (o,q),  (q,o))$ such that  $Q(w) = \sigma$,
\begin{equation}\label{eq:consC}
\mu_{L, N, \lambda, H}(w) = 
\frac{1}{2!} \, 
\frac{1}{2^{   \mathcal{D}^\prime(\sigma) }}  \, \, 
\lambda^2 \, \,  
\lambda^{ | \sigma | + \mathcal{D}^\prime(\sigma)}
\end{equation}
where the first and the second  factor follows from the term $\frac{1}{m_e!}$ in the definition of $\mu$,  the factor $\lambda^2$ corresponds to the weight of the two extremal links of the $\ell$-walk, 
and the last factor corresponds to the weight of all the remaining links.

\item Let us explain the last case: $y = o$.
An example of configuration
$w \in  \mathcal{A}^w( (o,q),  (o,r)  )$
is represented in the left of Figure 
 \ref{Fig:Pairingsexample3} when
 $q \neq r$
and in the right of Figure \ref{Fig:Pairingsexample3} when  $q = r$.
\begin{figure}
  \centering
    \includegraphics[width=\textwidth]{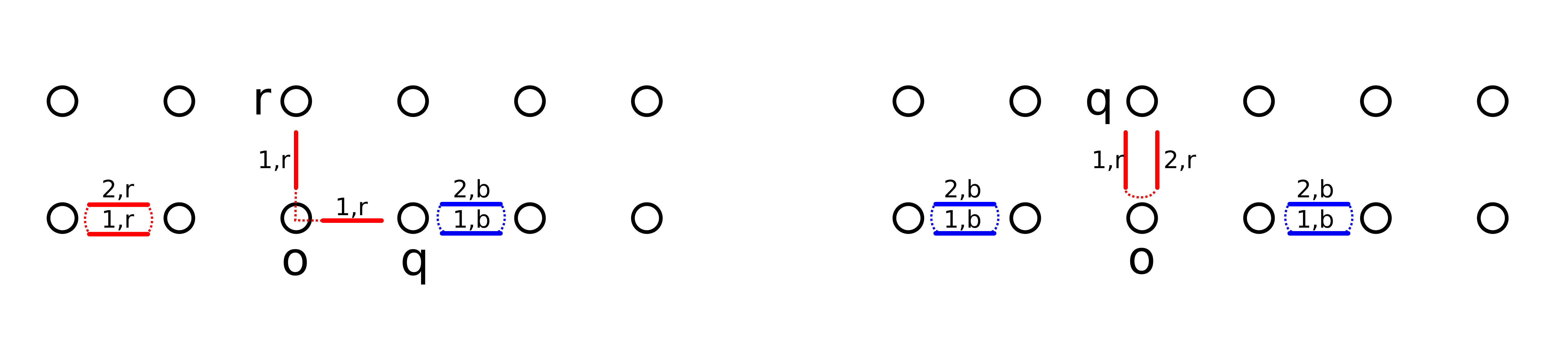}
      \caption{Same setting as in Figure \ref{Fig:Pairingsexample1}.  
 \textit{Left:} A realisation $w \in \mathcal{A}^w( \{ (o,q), (o,r)\} )$, $r \neq q$,
 such that $Q(w)$ consists of two isolated edges and eight monomers.
 \textit{Right:} A realisation $w \in \mathcal{A}^w( \{ (o,q), (o,q)\} )$
 such that $Q(w) \in \Sigma_{o,o}$ consists of two isolated edges and eight monomers.
    }\label{Fig:Pairingsexample3}
\end{figure}
In this case, for any $w \in   \mathcal{A}^w( (o,q),  (o,r)  )$
the unique $\ell$-walk in $w$ consists of just two links which are paired to each other at $o$.
When $q = r$, these links are the only two links on $\{o,q\} = \{o,r\}$,
while when $q \neq r$, each link of the two is the unique link on 
$\{o,q\}$ and $\{o,r\}$.
Since all the other paths are double links or $\ell$-loops,  we deduce (\ref{eq:cons3}).
From these considerations we also deduce that,
for any $\sigma \in \Sigma_{o,o}$,
for any $w \in \mathcal{A}( (o,q), (o,r) )$,
we have that,
\begin{equation}\label{eq:consD}
\mu_{L, N, \lambda, H}(w) = 
\begin{cases}
\frac{1}{2^{   \mathcal{D}^\prime(\sigma) }}  \, \, 
\lambda^2 \, \,  
\lambda^{ | \sigma | + \mathcal{D}^\prime(\sigma)}
& \mbox{ if $q \neq r$,} 
 \\ 
\frac{1}{2!} \, 
\frac{1}{2^{   \mathcal{D}^\prime(\sigma) }}  \, \, 
\lambda^2 \, \,  
\lambda^{ | \sigma | + \mathcal{D}^\prime(\sigma)}
& \mbox{ if $q = r$ } .
\end{cases}
\end{equation}
where the first factor in the first case and the first two factors in the second case follows from the term $\frac{1}{m_e!}$ in the definition of $\mu$,  the factor $\lambda^2$ corresponds to the weight of the two unique links the $\ell$-walk is composed of,  
and the last factor corresponds to the weight of all the remaining links.
\end{itemize}
Now that the multiplicity properties of the map and that the weights assigned by $\mu$ to the configurations $w$ in each of the four cases above have been considered, we can put all the cases together to conclude the proof of (\ref{eq:formula3}).
Below, we use  the general properties of the map $Q$, (\ref{eq:disjointness}) and (\ref{eq:goalmap3}), for the first identity,
 (\ref{eq:cons3}),
(\ref{eq:consA}),  (\ref{eq:consB}),
 (\ref{eq:consC}),  (\ref{eq:consD}),
 for the three cases of the second identity, 
 (\ref{eq:relationdirectedpartition}) and
  (\ref{eq:relationdirectedpartition2}) 
   for the three cases of the third and last identity, obtaining that, for any pair of directed edges 
 $(o,q), (y,r) \in \mathcal{E}_L$,
\begin{align*}
& \mu_{ L, N, \lambda, H} \Big (  \mathcal{A}^w  \big (    (o,q), (y,r)   \big )      \Big )  =
 \sum\limits_{  \sigma \in \Sigma_{o,y}}
\mu_{L, N, \lambda, H  } \Big ( \mathcal{A}^w  \big (    (o,q), (y,r)   \big )  \cap   \{ Q(w) = \sigma \}  \Big ) \\
& = 
\begin{cases}
 \lambda^2 \,   \sum\limits_{  \sigma \in \Sigma_{o,y}}   { \Big   (\frac{\lambda}{2} \Big )}^{\mathcal{D}^\prime(\sigma)} \, \, \lambda^{|\sigma|} \, \, N^{ \mathcal{L}(\sigma)} = \, \lambda^2 \, N \,
\mathbb{Y}_{L, N, \lambda}(o,y) \quad &  \mbox{ if $y \neq o$,} \\
 N \lambda^2 \,   \sum\limits_{  \sigma \in \Sigma_{o,y}}   { \Big   (\frac{\lambda}{2} \Big )}^{\mathcal{D}^ \prime(\sigma)} \, \, \lambda^{|\sigma|} \, \, N^{ \mathcal{L}(\sigma)} = \, \lambda^2 \, N \,
\mathbb{Y}_{L, N, \lambda}(o,y)\quad  & \mbox{ if $y = o$, $(o,q) \neq  (y,r)$, }  \\
\frac{N}{2} \, \, \lambda^2 \,   \sum\limits_{  \sigma \in \Sigma_{o,y}}   { \Big   (\frac{\lambda}{2} \Big )}^{\mathcal{D}^\prime(\sigma)} \, \, \lambda^{|\sigma|} \, \, N^{ \mathcal{L}(\sigma)} = \, \frac{ \lambda^2 }{2} \,  N \,
\mathbb{Y}_{L, N, \lambda}(o,y) \quad  &  \mbox{ if $y = o$, $(o,q) = (y,r)$.} 
\end{cases}
\end{align*}
This concludes the proof of Lemma \ref{lemma:componentsidentification}.
\end{proof}

\subsection{Proof of Theorem \ref{theo:keyinequality}}
\label{sect:proofinequailty}
All the ingredients for the proof of Theorem \ref{theo:keyinequality} have been introduced and we can now combine them to present the proof of Theorem \ref{theo:keyinequality}.
\begin{proof}[Proof of Theorem \ref{theo:keyinequality}]
Fix  arbitrary  finite integers $L \in 2 \mathbb{N}_{>0}$, $N \in \mathbb{N}_{>0}$,
fix an edge-parameter $\lambda \in (0, \infty)$.
Recall  that $x \sim y$ denotes that $x$ and $y$ are nearest neighbours in
$(\T_L,\E_L)$, 
and recall that $\sum_{(x,y) \in \mathcal{E}_L}$ is the sum over directed edges while 
$\sum_{\{x,y\} \in \mathcal{E}_L}$ is the sum over undirected edges.
Recall also that $(\T_L,\E_L)$ corresponds to the torus $\mathbb{Z}^d / L \mathbb{Z}^d$
while  $(\mathcal{T}_L,\mathcal{E}_L)$ is the extended torus.
For any real-valued vector $\boldsymbol{v} = (v_x)_{x \in \T_L}$, let 
$\boldsymbol{h}^{\boldsymbol{v}} = ( h_x^{\boldsymbol{v}}   )_{x \in \mathcal{T}_L}$
be obtained from $\boldsymbol{v}$ as follows:
\begin{equation}\label{eq:hasfunctionofv}
\forall x \in \mathcal{T}_L  \quad \quad
h^{\boldsymbol{v}} _x : = 
\begin{cases}
v_x & \mbox{ if $x \in \T_L$,} \\
-2\, d \, v_{x - \boldsymbol{e}_{d+1}} & \mbox{ if $x \in \T_L^{(2)}$.} 
\end{cases}
\end{equation}
Using the fact that for any real-valued vector $\boldsymbol{v} =  (v_x)_{x \in \T_L}$, 
\begin{equation}\label{eq:cardinalityedges}
2 \,  d \,  \sum\limits_{x \in \T_L} v_x^2  = 
 \sum\limits_{\{x,y\} \in \E_L} (v_x^2  + v_y^2)
\end{equation}
we deduce that,
\begin{multline}\label{eq:firstconversion}
\sum\limits_{  \{x,y\} \in \E_L  }
h^{\boldsymbol{v}}_x h^{\boldsymbol{v}}_y 
+ 
\frac{1}{2}  \sum\limits_{ x \in \T_L }
h^{\boldsymbol{v}}_x h^{\boldsymbol{v}}_{x + \boldsymbol{e}_{d+1}   }
=  \\
\sum\limits_{  \{x,y\} \in \E_L  }
v_x v_y 
-  
d \sum\limits_{  x \in \T_L  }
v_x^2 = 
\frac{1}{2} 
\sum\limits_{\{x, y\} \in \E_L}  ( 2 v_x v_y 
- v_x^2 - v_y^2) 
=
- \frac{1}{2} \sum\limits_{  \{ x,y \} \in \E_L  } 
\big (  
v_x - v_y
\big )^2.
\end{multline}
Moreover, 
\begin{equation}\label{eq:secondconversion}
\sum\limits_{q \in \mathcal{T}_L : (x,q) \in \mathcal{E}_L} h^{\boldsymbol{v}}_q =  ( \triangle \boldsymbol{v} )_{x}.
\end{equation}
From (\ref{eq:firstconversion}), (\ref{eq:secondconversion}) and from the definition in Proposition \ref{prop:moments} we deduce that,
for any $\boldsymbol{v} = (v_x)_{x \in \T_L}$,
\begin{equation}\label{eq:beforehomogenized}
\mathcal{Z}_{L, N, \lambda, H}^{(2)}(\boldsymbol{h}^{\boldsymbol{v}})
= - \frac{\lambda N}{2} \mathbb{Y}^\ell_{L, N, \lambda} \sum\limits_{ \{x,y\} \in  \E_L} (v_y-v_x)^2 \, \, 
+ \, \, 
\frac{\lambda^2 \, N}{2} \sum\limits_{x, y \in \T_L} \mathbb{Y}_{L, N, \lambda}(x,y) 
(\triangle \boldsymbol{v})_x \,
(\triangle \boldsymbol{v})_y.
\end{equation}
Moreover, recall that, as defined in Section \ref{sect:Chessboardscheme},
for any original vertex $x \in \T_L$, 
$( \boldsymbol{h}^{\boldsymbol{v}})^x$ is defined as the vector 
which is obtained from $\boldsymbol{h}^{\boldsymbol{v}}$
by copying the value $h^{\boldsymbol{v} }_x = v_x$ at any original vertex and the value
$h^{\boldsymbol{v} }_{ x  + \boldsymbol{e}_{d+1}} = -2d v_x$
at any virtual vertex and deduce from this and from (\ref{eq:beforehomogenized})
that, 
\begin{equation}\label{eq:homogenized}
\forall \boldsymbol{v} = (v_z)_{z \in \T_L}  \quad 
\forall x \in \T_L, \quad  \quad \mathcal{Z}_{L, N, \lambda, H}^{(2)} \Big (( \boldsymbol{h}^{\boldsymbol{v}})^{x} \Big )
= 0.
\end{equation}
We have that, in the limit as $\varphi \rightarrow 0$,
\begin{align*}
\mathcal{Z}_{L, N, \lambda, H}(\varphi \boldsymbol{h}^{\boldsymbol{v}}) & = \mathbb{Y}^{\ell}_{L, N, \lambda}
+ \varphi^2 \mathcal{Z}^{(2)}_{L, N, \lambda, H}(\boldsymbol{h}^{\boldsymbol{v}}) + o(\varphi^2) \\
& \leq \Big ( \prod_{x \in \T_L}
\mathcal{Z}_{L, N, \lambda, H} \big ( (\varphi \boldsymbol{h}^{\boldsymbol{v}})^x \big )
 \Big )^{\frac{1}{|\T_L|}} \\
& = \Big ( \prod_{x \in \T_L} \big ( \mathbb{Y}^{\ell}_{L, N, \lambda}
+ o(\varphi^2) \big )    \Big )^{\frac{1}{|\T_L|}} \\
& = \mathbb{Y}^{\ell}_{L, N, \lambda} + o(\varphi^2),
\end{align*}
For the first step above we used  Proposition \ref{prop:moments}, for the second step above we used Proposition \ref{prop:chessboardscheme},
for the third step above we used Proposition \ref{prop:moments} and (\ref{eq:homogenized}),
for the last step we perform the Taylor expansion around $x=0$ of the function:  $(1 + x)^{1 / |\T_L|}  = 1 + x / |\T_L| + O(x^2)$, where in our case $x = o(\varphi^2)$.
Thus we proved that, for any  $ \boldsymbol{v} \in \mathbb{R}^{\T_L}$, in the limit as $\varphi \rightarrow 0$, 
$$
 \mathbb{Y}^{\ell}_{L, N, \lambda}
+ \varphi^2 \mathcal{Z}^{(2)}_{L, N, \lambda, H}(\boldsymbol{h}^{\boldsymbol{v}}) + o(\varphi^2) \leq   \mathbb{Y}^{\ell}_{L, N, \lambda} + o(\varphi^2),
$$
where $\boldsymbol{h}^{\boldsymbol{v}}$ was defined in (\ref{eq:hasfunctionofv}) as a function of $\boldsymbol{v}$,
and this can only hold true if 
\begin{equation}\label{eq:2lessthan0}
 \mathcal{Z}^{(2)}_{L, N, \lambda, H}(\boldsymbol{h}^{ \boldsymbol{v}})  \leq 0.
\end{equation}
By replacing  (\ref{eq:beforehomogenized}) in the left hand-side of 
(\ref{eq:2lessthan0}), dividing the whole expression by $\frac{ \lambda N }{2} \mathbb{Y}^\ell_{L,N, \lambda}$ and plugging in 
(\ref{eq:twopointlambdarelation}), we deduce that, for any finite strictly positive $\lambda$,
$$
\sum\limits_{x,y \in \T_L} \mathbb{G}_{L, N, \frac{1}{\lambda}}(x,y) (\triangle v )_x \, 
(\triangle v)_{y} \,  
\leq  \, \sum\limits_{\{x,y\} \in \E_L} \big (v_y - v_x \big )^2.
$$
Since the previous relation holds for any strictly positive $\lambda$ and since for any finite $L$, 
$ \lim_{\lambda \rightarrow \infty}$ $ \mathbb{G}_{L, N, 1  / \lambda}(x,y)$ $  = 
\mathbb{G}_{L, N, 0}(x,y)$, we deduce that the same inequality holds true also with $\frac{1}{\lambda}$ replaced by $0$ and thus the proof is concluded.
\end{proof}

\section{A version of the Infrared bound}
\label{sect:InfraredBound}
The main goal of this section is to state and prove Theorem \ref{theo:Infrared} below, which provides a uniform lower bound for the Ces\`aro sum of the two-point function.  
Recall the definition of the \textit{odd} and \textit{even} sub-lattices, (\ref{eq:oddevensublattice}) and define  the \textit{odd} and   \textit{even two-point functions},
\begin{equation}\label{eq:eventwopoint}
\mathbb{G}^{o}_{L,  N, \rho}(x, y) : = \mathbb{G}_{L,  N, \rho}(x, y) \, \, \mathbbm{1}_{\{ x \in \T_L^o   \}},
\end{equation}
\begin{equation}\label{eq:oddtwopoint}
\mathbb{G}^{e}_{L,  N, \rho}(x, y) : = \mathbb{G}_{L,  N, \rho}(x, y) \, \, \mathbbm{1}_{\{ x \in \T_L^e    \}}.
\end{equation}
We will use the notation
$$
\mathbb{G}_{L,  N, \rho}(x) : = \mathbb{G}_{L,  N, \rho}(o, x) \quad \quad \mathbb{G}^o_{L,  N, \rho}(x) : = \mathbb{G}^o_{L,  N, \rho}(o, x) \quad \quad \mathbb{G}^e_{L,  N, \rho}(x) : = \mathbb{G}^e_{L,  N, \rho}(o, x),
$$
for any $x \in \T_L$, and we will omit the sub-scripts when appropriate.
Recall that $r_d$ is the expected number of returns of a simple random walk in $\mathbb{Z}^d$.
 \begin{theorem}[\textbf{Infrared-ultraviolet bound}]\label{theo:Infrared}
For any   $d, N \in \mathbb{N}_{>0}$, $L \in 2 \mathbb{N}_{>0}$,
$\rho \in [0, \infty)$, 
we have that, 
\begin{equation}\label{eq:infrared}
 \sum\limits_{\substack{x \in \T^o_L }} \frac{\mathbb{G}^o_{L, N, \rho}(x) }{|\T^o_L|} 
\geq  \,  
\mathbb{G}_{L, N, \rho}  (  \,  \boldsymbol{e}_1  \,   ) \,  -  \,  \mathcal{I}_L(d)  \,  
-   \,  \sum\limits_{\substack{x \in \T_L }} \frac{\mathbb{G}^e_{L, N, \rho}(x) }{|\T^e_L|} + 
\sum\limits_{ \substack{ x  \in \T_L \,  : 
\\ x_2 = \ldots  = x_d = 0}} \, \Upsilon_L(x) \mathbb{G}^e_{L, N, \rho}(x)
\end{equation}
where $\big (\mathcal{I}_L(d) \big )_{L \in \mathbb{N}}$ is a  sequence of real numbers, which is defined in (\ref{eq:ILDfunction}) below,  whose limit $L \rightarrow \infty$ exists and satisfies
\begin{equation}\label{eq:limitofIL}
\lim_{ L \rightarrow \infty   } \mathcal{I}_L(d) = \frac{r_d}{4d},
\end{equation}
and 
 $(\Upsilon_L)_{L \in \mathbb{N}}$  is a  sequence of  real-valued functions,  which are defined in (\ref{eq:upsilonfunction}) below,  and converges  point-wise  with $L$ to a finite  function $\Upsilon$.
\end{theorem}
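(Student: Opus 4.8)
The plan is to turn the Key Inequality (Theorem \ref{theo:keyinequality}) into a pointwise infrared bound and feed it, through Lemma \ref{lemma:relationmodus}, into the Fourier identity for the Cesàro sum. First I would insert in (\ref{eq:starting point infrared}) the test vectors $\boldsymbol{v}$ whose Fourier transform is supported on a single conjugate pair $\{k,-k\}$ of modes, and diagonalise. Since $\widehat{(\triangle v)}(k)=-\widehat{E}(k)\,\hat{v}(k)$ with $\widehat{E}(k):=\sum_{i=1}^d(2-2\cos k_i)$, the left-hand side of (\ref{eq:starting point infrared}) becomes $\tfrac{1}{|\T_L|}\sum_{k}\hat{\mathbb{G}}_{L,N,\rho}(k)\,\widehat{E}(k)^2|\hat{v}(k)|^2$ and the right-hand side $\tfrac{1}{|\T_L|}\sum_{k}\widehat{E}(k)|\hat{v}(k)|^2$, which yields
\begin{equation*}
\hat{\mathbb{G}}_{L,N,\rho}(k)\,\widehat{E}(k)\le 1\qquad(k\in\T_L^*\setminus\{o\}),
\end{equation*}
and I would record the elementary identity $\widehat{E}(k)+\widehat{E}(k-p)=4d$. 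I would also note the symmetry facts that make the argument work: under the shift $k\mapsto k+p$ the transform $\hat{\mathbb{G}}^e$ is invariant while $\hat{\mathbb{G}}^o$ is anti-invariant, both are even in each coordinate, and $\hat{\mathbb{G}}=\hat{\mathbb{G}}^o$ when $\rho=0$ (since then $\mathbb{G}$ is supported on $\T^o_L$).

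By Lemma \ref{lemma:relationmodus} and reality of $\hat{\mathbb{G}}$, the Cesàro sum on the left of (\ref{eq:infrared}) equals $\mathbb{G}_{L,N,\rho}(\boldsymbol{e}_1)-\tfrac{1}{|\T_L|}\sum_{k\ne o,p}\cos(k_1)\hat{\mathbb{G}}_{L,N,\rho}(k)$, so everything reduces to an upper bound on $\tfrac{1}{|\T_L|}\sum_{k\ne o,p}\cos(k_1)\hat{\mathbb{G}}_{L,N,\rho}(k)$. I would decompose $\hat{\mathbb{G}}=\hat{\mathbb{G}}^o+\hat{\mathbb{G}}^e$; since $\cos(k_1)\hat{\mathbb{G}}^e(k)$ is anti-invariant under $k\mapsto k+p$ its sum over $\T_L^*\setminus\{o,p\}$ vanishes, so the sum equals $\tfrac{1}{|\T_L|}\sum_{k\ne o,p}\cos(k_1)\hat{\mathbb{G}}^o(k)$, and the anti-invariance of $\hat{\mathbb{G}}^o$ together with $\widehat{E}(k)+\widehat{E}(k-p)=4d$ folds this onto the half $\{\cos k_1>0\}$ of the dual torus. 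There I would bound $\hat{\mathbb{G}}^o(k)=\tfrac12\big(\hat{\mathbb{G}}(k)-\hat{\mathbb{G}}(k-p)\big)=\hat{\mathbb{G}}(k)-\hat{\mathbb{G}}^e(k)$ using the infrared bound at $k$ (and, via the anti-invariance, its image at $k-p$), keeping the $\hat{\mathbb{G}}^e$-remainder explicit; after undoing the fold this produces the deterministic main term $\mathcal{I}_L(d):=\tfrac{1}{|\T_L|}\sum_{k\ne o,p}\frac{\cos k_1}{2\widehat{E}(k)}$ together with a correction that, after a discrete partial summation in the first coordinate, is exactly the $\mathbb{G}^e$-terms on the right of (\ref{eq:infrared}). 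This is where $\Upsilon_L$ is defined --- as an explicit lattice kernel of the form $x\mapsto\tfrac{1}{|\T_L|}\sum_{k\ne o,p}\cos(k_1)\,g_L(k)\,e^{ik\cdot x}$ for a suitable bounded $g_L$ --- whose pointwise limit exists by dominated convergence.

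It then remains to compute $\lim_L\mathcal{I}_L(d)$. Writing $1-\tfrac1d\sum_i\cos k_i=\widehat{E}(k)/2d$ for the Fourier symbol of the simple random walk on $\mathbb{Z}^d$,
\begin{equation*}
\lim_{L\to\infty}\mathcal{I}_L(d)=\int_{[-\pi,\pi]^d}\frac{\cos k_1}{2\widehat{E}(k)}\,\frac{\d k}{(2\pi)^d}
=\frac{1}{4d}\int_{[-\pi,\pi]^d}\frac{\cos k_1}{1-\tfrac1d\sum_i\cos k_i}\,\frac{\d k}{(2\pi)^d}=\frac{G^d(o,\boldsymbol{e}_1)}{4d}=\frac{r_d}{4d},
\end{equation*}
where $G^d$ is the Green's function of the walk and $G^d(o,\boldsymbol{e}_1)=G^d(o,o)-1=r_d$ follows from the one-step recursion $G^d(o,o)=1+\tfrac1{2d}\sum_{y\sim o}G^d(y,o)$. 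Combining the three contributions gives (\ref{eq:infrared}).

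I expect the crux to be the folding step for $\hat{\mathbb{G}}^o$: extracting the factor $\tfrac12$ in $\mathcal{I}_L(d)$ --- equivalently, obtaining the \emph{optimal} constant $r_d/4d$ rather than the strictly larger $\int_{\cos k_1>0}\cos(k_1)/\widehat{E}(k)\,\tfrac{\d k}{(2\pi)^d}$ that a naive mode-by-mode use of the infrared bound would give --- forces one to combine the anti-invariance of $\hat{\mathbb{G}}^o$, the infrared bound at both $k$ and $k-p$, and the sign of $\cos k_1$ in exactly the right order, and it is here that the hypothesis $\rho=0$ is used decisively. This is also where the admissible range $N\in(0,4/r_d)$ enters: in the end, via Lemma \ref{lemma:relationmodus} and $\mathbb{G}_{L,N,0}(\boldsymbol{e}_1)=1/dN$, the bound (\ref{eq:infrared}) is non-vacuous precisely when $r_d/4d<1/dN$.
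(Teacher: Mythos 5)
Your architecture is the one the paper uses: the pointwise bound $\hat{\mathbb{G}}_{L,N,\rho}(k)\,\varepsilon(k)\le 1$ obtained from the Key Inequality with cosine test vectors (this is Proposition \ref{prop:highfrequencybound}), the reduction through Lemma \ref{lemma:relationmodus}, the odd/even decomposition with the parity symmetries under $k\mapsto k+p$ (Lemma \ref{lemma:Fourierproperties}), the folding onto the half dual torus $\mathbb{H}=\{k_1\in(-\tfrac{\pi}{2},\tfrac{\pi}{2}]\}$ where $\cos k_1\ge 0$, and the explicit retention of the $\hat{\mathbb{G}}^e$ remainder, which is what produces the $\Upsilon_L$ term. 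The observation that the even part cancels from the full sum, and the final Green's-function evaluation $\lim_L \frac{1}{|\T_L|}\sum_{k\ne o,p}\frac{\cos k_1}{2\varepsilon(k)}=\frac{r_d}{4d}$ via the one-step recursion for the return Green's function, are both correct.

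The gap is exactly at the step you flag as the crux, and your proposal does not close it. What the one-sided bound $\hat{\mathbb{G}}(k)\le 1/\varepsilon(k)$ yields after folding, using $\hat{\mathbb{G}}^o=\hat{\mathbb{G}}-\hat{\mathbb{G}}^e$ and $\cos k_1\ge 0$ on $\mathbb{H}$, is the \emph{half-torus} quantity $\frac{1}{|\T_L|}\sum_{k\in\mathbb{H}\setminus\{o\}}\frac{2\cos k_1}{\varepsilon(k)}$, which is precisely how the paper defines $\mathcal{I}_L(d)$ in (\ref{eq:ILDfunction}). Your full-torus $\mathcal{I}_L(d)=\frac{1}{|\T_L|}\sum_{k\ne o,p}\frac{\cos k_1}{2\varepsilon(k)}$ is a genuinely different and strictly smaller object: unfolding it gives $\frac{1}{|\T_L|}\sum_{k\in\mathbb{H}\setminus\{o\}}\cos(k_1)\bigl[\frac{1}{2\varepsilon(k)}-\frac{1}{2\varepsilon(k+p)}\bigr]$, so the two candidates differ by $\frac{1}{|\T_L|}\sum_{k\in\mathbb{H}\setminus\{o\}}\cos(k_1)\bigl[\frac{3}{2\varepsilon(k)}+\frac{1}{2\varepsilon(k+p)}\bigr]$, a Riemann sum of a non-negative, not identically vanishing function; hence they do not even agree in the limit $L\to\infty$. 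The mechanism you propose to bridge them --- ``the infrared bound at both $k$ and $k-p$'' --- fails: writing $2\cos(k_1)\hat{\mathbb{G}}^o(k)=\cos(k_1)\hat{\mathbb{G}}(k)-\cos(k_1)\hat{\mathbb{G}}(k+p)$ on $\mathbb{H}$, the second term requires a \emph{lower} bound on $\hat{\mathbb{G}}(k+p)$, and Proposition \ref{prop:highfrequencybound} is one-sided. So your (correct) limit computation evaluates a quantity that the argument has not produced. Be aware, for comparison, that the paper instead keeps the half-torus sum and evaluates its limit via a change of variables $k_1\mapsto k_1/2$ and an auxiliary walk with half-steps in the $\boldsymbol{e}_1$ direction; that identification is itself delicate, since the Fourier dual of $\tfrac12\mathbb{Z}\times\mathbb{Z}^{d-1}$ is not $[-\pi,\pi]^d$ and the paper's identity (\ref{eq:tojustify}) already fails at $m=0$ (left side $P(S_0=\boldsymbol{e}_1)=0$, right side $2/\pi$). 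To complete a proof along your lines you must either show that the full-torus average genuinely upper-bounds $\frac{1}{|\T_L|}\sum_{k\ne o,p}\cos(k_1)\hat{\mathbb{G}}^o(k)$ --- which needs input beyond the one-sided infrared bound --- or compute the limit of the half-torus sum directly and accept the constant it actually gives.
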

Such a theorem will be applied under the assumption that $\rho = 0$, in
which case the last two terms in the right-hand side of (\ref{eq:infrared}) equal zero, as we will prove in Lemma \ref{lemma:Fourierproperties} below.
Although we will apply the theorem under the assumption $\rho = 0$, in this section we will allow $\rho$ to take positive values  for the sake of generality.
\begin{remark}\label{remark:differentcase}
A similar lower bound for the Ces\`aro sum of two-point functions
to ours (\ref{eq:infrared})
 was obtained in the framework of spin systems with continuous symmetry  \cite{Frohlich, FrohlichI, FrohlichII}.
Our analysis differs from the  spin systems case    for some important aspects.
In the spin systems case one obtains the Key Inequality with $\mathbb{G}_{L, N, \rho}(x,y)$ replaced by  the correlation between the spins, which is typically denoted by $<S_o \cdot S_x>_{L, N, \beta}$, where $N$ there represents the number of components of the spins  and $\beta$ is the inverse temperature.
There, the Key Inequality leads to a uniformly positive lower bound for the 
 Ces\`aro sum of two-point functions, similarly to our case.
This bound is  usually  referred to as \textit{infrared bound}, since the quantity which one bounds from below uniformly
corresponds to the zero  (i.e, low frequency) Fourier mode of the two-point function.  The same approach as in the classical case of spin systems with continuous symmetry would work in our case if the term $\mathbb{G}_{L, N, \rho}  (   o   )$ was strictly positive (and large enough) uniformly in $L$ and in the limit of small $\rho$. Unfortunately this is not the case, since it is shown in 
 Lemma \ref{lemma:Fourierproperties} below that $\mathbb{G}_{L, N, 0}  (  \, o \,   ) = 0$ (more precisely, when $\rho = 0$, the two-point function equals zero at any even site).
For this reason, we proceed differently than in  \cite{Frohlich, FrohlichI, FrohlichII}:  The term $\mathbb{G}_{L, N, \rho}  (  o    )$ is replaced by the term  $\mathbb{G}_{L, N, \rho}  (   \boldsymbol{e}_1   )$ 
and we use  the  symmetry properties of the  Fourier odd two point function to deal with the presence of the factor  $e^{ i k \cdot \boldsymbol{e}_1}$ in the right-hand side of (\ref{eq:fourierlemma}), which is not present in \cite{Frohlich, FrohlichI, FrohlichII}.
We refer to the resulting bound as \textit{Infrared-ultraviolet bound}, since the quantity which we bound from below, which is in the left-hand side of (\ref{eq:fourierlemma}), involves not only the lowest, but also the highest frequency Fourier mode (more precisely, it equals the difference of the two).
\end{remark}
We now start to introduce the arguments which lead to the proof of Theorem \ref{theo:Infrared}.
To begin, we define the central quantity,
\begin{equation}\label{eq:varepsilonfunction}
\forall k \in \T_L^*, \quad \quad  \varepsilon(k) := 
2 \sum\limits_{j=1}^{d} \big ( \,  1 - \cos (k_j) \,  \big ).
\end{equation}
Recall also the definition of Fourier transform and inverse Fourier transform which were provided in Section \ref{sect:ingredientspart2}.
\begin{proposition}[\textbf{High frequency upper bound}]\label{prop:highfrequencybound}
Under the same assumptions as in Theorem \ref{theo:Infrared}, 
for any $L \in 2 \mathbb{N}_{>0}$,
\begin{equation}\label{eq:highfrequencybound}
 \forall k \in \T_L^* \setminus \{o\} \quad \quad 
 \hat{ \mathbb{G}}_{L, N, \rho}(k)\, \, =  \hat{ \mathbb{G}}^o_{L, N, \rho}(k)\ +  \hat{ \mathbb{G}}^e_{L, N, \rho}(k)\  \leq  \, \, 
\frac{1}{ \varepsilon(k)}.
\end{equation}
\end{proposition}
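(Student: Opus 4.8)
The plan is to derive \eqref{eq:highfrequencybound} directly from the Key Inequality, Theorem \ref{theo:keyinequality}, by choosing the test vector $\boldsymbol{v}$ to be a Fourier mode. First I would rewrite the left-hand side of \eqref{eq:starting point infrared} in Fourier variables. Recall that $(\triangle v)_x = \sum_{y\sim x}(v_y-v_x)$, so if $\hat v(k)$ denotes the Fourier transform of $\boldsymbol v$, then $\widehat{(\triangle v)}(k) = -\varepsilon(k)\,\hat v(k)$, with $\varepsilon(k)=2\sum_{j=1}^d(1-\cos k_j)$ as in \eqref{eq:varepsilonfunction}. Using Parseval/Plancherel together with translation invariance of $\mathbb{G}_{L,N,\rho}(x,y)=\mathbb{G}_{L,N,\rho}(x-y)$, the quadratic form on the left of \eqref{eq:starting point infrared} becomes $\frac{1}{|\T_L|}\sum_{k\in\T_L^*}\hat{\mathbb{G}}_{L,N,\rho}(k)\,\varepsilon(k)^2\,|\hat v(k)|^2$. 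Similarly the right-hand side $\sum_{\{x,y\}\in\E_L}(v_y-v_x)^2$ equals $\frac{1}{|\T_L|}\sum_{k\in\T_L^*}\varepsilon(k)\,|\hat v(k)|^2$ (each undirected edge contributes $|v_y-v_x|^2$, and summing $2(1-\cos k_j)$ over the $d$ coordinate directions reproduces $\varepsilon(k)$). So the Key Inequality reads $\sum_k \hat{\mathbb{G}}(k)\,\varepsilon(k)^2|\hat v(k)|^2 \le \sum_k \varepsilon(k)|\hat v(k)|^2$ for every real $\boldsymbol v$.

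Next I would concentrate this inequality at a single nonzero mode. Fix $k^\star\in\T_L^*\setminus\{o\}$. Since $\boldsymbol v$ is real, the natural choice is $v_x = \cos(k^\star\cdot x)$ or $v_x=\sin(k^\star\cdot x)$, which puts mass on $\{k^\star,-k^\star\}$ only; using the symmetry $\hat{\mathbb{G}}(k)=\hat{\mathbb{G}}(-k)$ (which follows from $\mathbb{G}(x)=\mathbb{G}(-x)$, itself a consequence of the definitions \eqref{eq:partitiondirected}, \eqref{eq:twopoint} and the reversal of loop/walk orientations) and $\varepsilon(k)=\varepsilon(-k)$, the Key Inequality collapses to $\hat{\mathbb{G}}(k^\star)\,\varepsilon(k^\star)^2 \le \varepsilon(k^\star)$. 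For $k^\star\ne o$ one has $\varepsilon(k^\star)>0$, so dividing by $\varepsilon(k^\star)^2$ gives exactly $\hat{\mathbb{G}}_{L,N,\rho}(k^\star)\le 1/\varepsilon(k^\star)$. (Alternatively, one may take complex exponentials $v_x=e^{ik^\star\cdot x}$ and run the argument on the Hermitian form; the paper's Key Inequality is stated for real $\boldsymbol v$, so I would present the real version with $\cos$ to stay within the stated hypotheses, handling the $k^\star$ with a vanishing $\cos$-transform by using $\sin$ instead, or noting that such $k^\star$ has $\varepsilon(k^\star)$ of a definite sign and the bound is immediate.) Finally, the decomposition $\hat{\mathbb{G}}_{L,N,\rho}(k)=\hat{\mathbb{G}}^o_{L,N,\rho}(k)+\hat{\mathbb{G}}^e_{L,N,\rho}(k)$ is just the splitting of $\T_L=\T_L^o\sqcup\T_L^e$ in the definition of the Fourier transform \eqref{eq:Fourierdefinition} together with \eqref{eq:eventwopoint}--\eqref{eq:oddtwopoint}.

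The only genuinely delicate point is the passage from "the quadratic-form inequality holds for all real $\boldsymbol v$" to "each Fourier coefficient $\hat{\mathbb{G}}(k^\star)$ is bounded": one must make sure the chosen real test vector isolates $\pm k^\star$ cleanly and that its Fourier transform does not vanish at $k^\star$. Since for every $k^\star\in\T_L^*$ at least one of $\cos(k^\star\cdot x)$, $\sin(k^\star\cdot x)$ has nonzero Fourier mass at $k^\star$ (the only exception would be $k^\star$ with $2k^\star\equiv o$, i.e.\ coordinates in $\{0,\pi\}$, where $\sin\equiv 0$ but $\cos$ works), this is a finite case-check rather than a real obstacle. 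Everything else is bookkeeping with $\varepsilon(k)$, Parseval, and the $o$/$e$ decomposition.
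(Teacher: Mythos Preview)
Your proposal is correct and follows essentially the same approach as the paper: both plug the real test vector $v_x=\cos(k^\star\cdot x)$ into the Key Inequality and simplify. The paper records the three identities $(\triangle v)_x=-\varepsilon(k)v_x$, $\sum_{\{x,y\}}(v_y-v_x)^2=\varepsilon(k)\sum_x v_x^2$, and $\sum_{x,y}v_xv_y\,\mathbb{G}(x,y)=\hat{\mathbb{G}}(k)\sum_x v_x^2$ directly, whereas you package the same computation through Parseval; the content is identical, and your case-check for half-period modes is unnecessary since $\cos$ always works (indeed $\sum_x v_x^2\ge v_o^2=1>0$).
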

\begin{proof}
To begin, we  fix an arbitrary $k \in \T^*_L \setminus \{o\}$ and choose the vector $\boldsymbol{v} = (v_x)_{x \in \T_L}$ such that, for any $x \in \T_L$,
$ v_x  := \cos( k \cdot x).
$
We note that under this choice the following facts hold true,
\begin{enumerate}[(i)]
\item For any $x \in \T_L$,
$({\triangle v})_x  = - \varepsilon(k) \, \, v_x,$
\item $\sum_{\{x,y\} \in \E_L   } (v_y - v_x)^2 =  \varepsilon(k) \, \,  \sum_{x \in \T_L} v_x^2,$
\item 
$
\sum_{x, y \in \T_L} \, v_x  \, v_y  \,\mathbb{G}(x,y) = \hat{\mathbb{G}}(k) \, \sum_{x \in \T_L}  v_x^2.
$
\end{enumerate}
These computations are classical and we present their proof  in the appendix of this paper.
The proof of Proposition \ref{prop:highfrequencybound}
follows from Theorem \ref{theo:keyinequality} and from such computations.
We first apply  (i) 
to the left-hand side  of (\ref{eq:starting point infrared}), 
then we apply  (ii)  to the right-hand side of 
(\ref{eq:starting point infrared}),  thus obtaining that 
$$
\varepsilon^2(k) \,   \sum\limits_{x,y \in \T_L} \, v_x \, v_y  \, \mathbb{G}(x,y)\,  \leq \,
\varepsilon(k) \, \sum\limits_{x \in \T_L} \,  v_x^2.
$$
Now we apply (iii) to the left-hand side of the previous expression and we divide everything by $\varepsilon^2(k) \sum_{x \in \T_L} v_x^2$. This concludes the proof.
\end{proof}
The next lemma states some properties of the  two-point functions and of their Fourier transforms.
\begin{lemma}\label{lemma:Fourierproperties}
Let $\mathbbm{U}$ be the set of vectors $\boldsymbol{u} : =(u_1, \ldots, u_d) \in \mathbb{Z}^d$ such that  $|u_i| = 1$ for any coordinate $i$. The following properties hold for any  $\boldsymbol{u} \in \mathbbm{U}$,
\begin{enumerate}[(i)]
\item For any $k \in \T_L^*$, $\hat{\mathbb{G}}_{L, N, \rho}(k), \hat{\mathbb{G}}_{L, N, \rho}^e(k), \hat{\mathbb{G}}_{L, N, \rho}^o(k) \in \mathbb{R}$.
\item If $k, k + \pi \boldsymbol{u} \in \T_L^*$, then $\hat{\mathbb{G}}_{L, N, \rho}^o(k  + \pi \boldsymbol{u}) = - \hat{\mathbb{G}}_{L, N, \rho}^o(k),$ 
\item If $k, k +  \pi \boldsymbol{u} \in \T_L^*$, then $\hat{\mathbb{G}}_{L, N, \rho}^e(k + \pi  \boldsymbol{u}) =  \hat{\mathbb{G}}_{L, N, \rho}^e(k).$ 
\item For any
$
L \in 2 \mathbb{N}$ and $ x \in \T_L$, we have that $\mathbb{G}^e_{L, N, 0}(o,x) = 0.
$
\end{enumerate}
\end{lemma}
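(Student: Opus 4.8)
The plan is to derive (i)--(iii) from the two symmetries of the two-point function together with a short parity computation, and to derive (iv) from an elementary combinatorial constraint on fully-packed configurations. For (i), I would first observe that $\mathbb{G}_{L, N, \rho}(o, x) = \mathbb{G}_{L, N, \rho}(o, -x)$ for every $x \in \T_L$: the point reflection $y \mapsto -y$ fixes the origin, preserves $(\T_L, \E_L)$, maps $\Omega_{o,x}$ bijectively onto $\Omega_{o,-x}$, and leaves $\mathcal{M}(\pi)$ and $\mathcal{L}(\pi)$ unchanged, hence preserves the weights. Since $d(o,x) = d(o,-x)$, the same reflection preserves the indicators $\mathbbm{1}\{x \in \T_L^e\}$ and $\mathbbm{1}\{x \in \T_L^o\}$, so $\mathbb{G}^e_{L, N, \rho}(o, \cdot)$ and $\mathbb{G}^o_{L, N, \rho}(o, \cdot)$ are also even functions on $\T_L$. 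For any even $f : \T_L \to \mathbb{R}$ one has $\hat{f}(k) = \sum_{x \in \T_L} \cos(k \cdot x)\, f(x) \in \mathbb{R}$, which gives (i) for all three functions.

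For (ii) and (iii), I would fix $\boldsymbol{u} \in \mathbbm{U}$ and a mode $k$ with $k, k + \pi \boldsymbol{u} \in \T_L^*$ — note this already forces $L$ to be even. From the definition of the Fourier transform, $\hat{\mathbb{G}}^o_{L, N, \rho}(k + \pi \boldsymbol{u}) = \sum_{x \in \T_L} e^{-i k \cdot x}\, e^{-i \pi \boldsymbol{u} \cdot x}\, \mathbb{G}^o_{L, N, \rho}(o, x)$, and likewise with $\mathbb{G}^e$ in place of $\mathbb{G}^o$. Because every coordinate of $\boldsymbol{u}$ is $\pm 1$, $e^{-i \pi \boldsymbol{u} \cdot x} = \prod_{j=1}^d (-1)^{u_j x_j} = \prod_{j=1}^d (-1)^{x_j} = (-1)^{x_1 + \cdots + x_d}$, and for $L$ even the parity of $x_1 + \cdots + x_d$ coincides with that of the graph distance $d(o,x)$ on $(\T_L, \E_L)$ (wrapping around the torus changes a coordinate by the even number $L$). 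Hence the factor $e^{-i \pi \boldsymbol{u} \cdot x}$ equals $-1$ on $\T_L^o$, the support of $\mathbb{G}^o_{L, N, \rho}(o, \cdot)$, and $+1$ on $\T_L^e$, the support of $\mathbb{G}^e_{L, N, \rho}(o, \cdot)$; substituting these values yields $\hat{\mathbb{G}}^o_{L, N, \rho}(k + \pi \boldsymbol{u}) = - \hat{\mathbb{G}}^o_{L, N, \rho}(k)$ and $\hat{\mathbb{G}}^e_{L, N, \rho}(k + \pi \boldsymbol{u}) = \hat{\mathbb{G}}^e_{L, N, \rho}(k)$, that is, (ii) and (iii).

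For (iv), with $\rho = 0$ only configurations $\pi$ with $\mathcal{M}(\pi) = 0$ contribute to $\mathbb{Z}_{L, N, 0}(o,x)$ and to $\mathbb{Z}^\ell_{L, N, 0}$, and the denominator is strictly positive (for instance the configuration consisting of double edges along a perfect matching of the even torus), so $\mathbb{G}_{L, N, 0}(o,x)$ is well defined. Fix $x \in \T_L^e$ and suppose some $\pi \in \Omega_{o,x}$ had $\mathcal{M}(\pi) = 0$. Every component of $\pi$ other than the walk from $o$ to $x$ is a loop or a double edge and thus covers an even number of vertices; since $|\T_L| = L^d$ is even, the walk from $o$ to $x$ then covers an even number of vertices and so has an odd number of edges. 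But $(\T_L, \E_L)$ with $L$ even is bipartite, with $\T_L^e$ and $\T_L^o$ as the two classes, so a self-avoiding walk of odd length has its endpoints in different classes, forcing $d(o,x)$ to be odd and contradicting $x \in \T_L^e$. Hence $\Omega_{o,x}$ contains no fully-packed configuration when $x \in \T_L^e$, so $\mathbb{Z}_{L, N, 0}(o,x) = 0$ and $\mathbb{G}^e_{L, N, 0}(o,x) = \mathbb{G}_{L, N, 0}(o,x)\, \mathbbm{1}\{x \in \T_L^e\} = 0$ (the identity being trivial for $x \notin \T_L^e$). The only genuinely nontrivial point in the whole lemma is this parity argument for (iv); once one records that for $L$ even the torus is bipartite with bipartition $(\T_L^e, \T_L^o)$, both (iv) and the parity computations in (ii)--(iii) are immediate.
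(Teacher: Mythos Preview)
Your proof is correct and follows essentially the same approach as the paper: evenness of $\mathbb{G}(o,\cdot)$ under $x\mapsto -x$ for (i), the parity identity $e^{-i\pi\boldsymbol{u}\cdot x}=(-1)^{\sum_j x_j}$ for (ii)--(iii), and the vertex-counting argument (loops and double edges cover evenly many vertices, $|\T_L|$ is even, hence a monomer-free walk to an even site is impossible) for (iv). You simply spell out in detail what the paper states in one sentence per item; the only cosmetic point is that for $x=o$ the ``walk'' is degenerate and $o$ itself is a monomer by definition of $\Omega_{o,o}$, so $\mathcal{M}(\pi)\geq 1$ immediately, which your contradiction hypothesis already rules out.
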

\begin{proof}
The first property follows from the definition of Fourier transform and the symmetries of  $\mathbb{Z}^d / L \mathbb{Z}^d$. The properties (i) and (ii) follow from the definition of Fourier transform and the fact that,  if $x \in \T_L^o$, then $\sum_{i=1}^d x_i = 2 \mathbb{Z}+1$ and 
if $x \in \T_L^e$,  then $\sum_{i=1}^d x_i = 2 \mathbb{Z}$.
The fourth property holds true since, if the  walk in $\pi \in \Omega$ ends at an even site, then it contains an odd number of sites and,
since  the total number of sites in $\T_L$ is even and since each loop or double edge contains an even number of sites, this implies that 
at least one monomer is present in $\pi$ and thus that the weight of $\pi$ is zero since $\rho = 0$.
\end{proof}
We now have all the ingredients we need for proving Theorem \ref{theo:Infrared}.
\begin{proof}[\textbf{Proof of Theorem \ref{theo:Infrared}}]
Note that, since $\hat{\mathbb{G}}_{L, N, 0}(k)$ is real, then it follows from (\ref{eq:fourierlemma})
that the term in the left-hand side of the next expression is real, hence we deduce that,
\begin{align}\label{eq:someargument}
\sum\limits_{ k \in \T_L^* \setminus \{o, p\} } e^{ i k \cdot \boldsymbol{e}_1  } \, \hat{\mathbb{G}}_{L, N, \rho}(k) & = 
\sum\limits_{ k \in \T_L^* \setminus \{o, p\} } Re \Big ( e^{ i k \cdot \boldsymbol{e}_1} \, \hat{\mathbb{G}}_{L, N, \rho}(k)  \Big  ) 
= 
\sum\limits_{ k \in \T_L^* \setminus \{o, p\} } \cos(  k \cdot \boldsymbol{e}_1 )  \hat{\mathbb{G}}_{L, N, \rho}(k).
\end{align}
Our goal is to provide an upper bound for the previous expression, which by Lemma \ref{lemma:Fourierproperties}
gives a lower bound to the Ces\'aro sum of the odd-two point function.
For this we use the symmetry properties of the odd and even Fourier two-point functions to transform the previous sum into a sum over sites where the cosine in (\ref{eq:someargument}) takes non-negative values. This makes possible the application of Proposition \ref{prop:highfrequencybound} to upper bound $\hat{\mathbb{G}}_{L, N, \rho}(k)$.
More precisely, we define the subset of $\T_L^{*}$, 
$$
\mathbb{H} : = \big  \{ k 
\in \T_L^* \, \, : \, \,     k_1 \in (-\frac{\pi}{2}, \frac{\pi}{2}]    \big  \}, 
$$
and we note that there exists a bijection $\Psi : \mathbb{H} \setminus \{o\} \mapsto \T_L^* \setminus (\mathbb{H} \cup \{p\})$ which is such that, for any $k \in \mathbb{H}$,  the following  properties hold true,
\begin{equation}\label{eq:symmetrisationproperties}
 \cos( k \cdot \boldsymbol{e}_1) = - \cos(  \Psi(k) \cdot \boldsymbol{e}_1),   \quad 
\hat{\mathbb{G}}_{L, N, \rho}^o(k) = -\hat{\mathbb{G}}_{L, N, \rho}^o\big (\Psi(k) \big ), \quad  
\hat{\mathbb{G}}_{L, N, \rho}^e(k) = \hat{\mathbb{G}}_{L, N, \rho}^e\big (\Psi(k) \big ) 
\end{equation}
The bijection $\Psi$ consists of a translation of any vertex $x \in \mathbb{H}$ by an appropriate vector $\pi \, \boldsymbol{u}$, where $\boldsymbol{u}$ is an element of  $\mathbb{U}$ which  depends on $x$.
See also Figure \ref{Fig:dualtorussymmetry} for a representation of $\Psi$ in the (simpler) case of $d = 2$.
\begin{figure}
  \centering
     \includegraphics[width=0.35\textwidth]{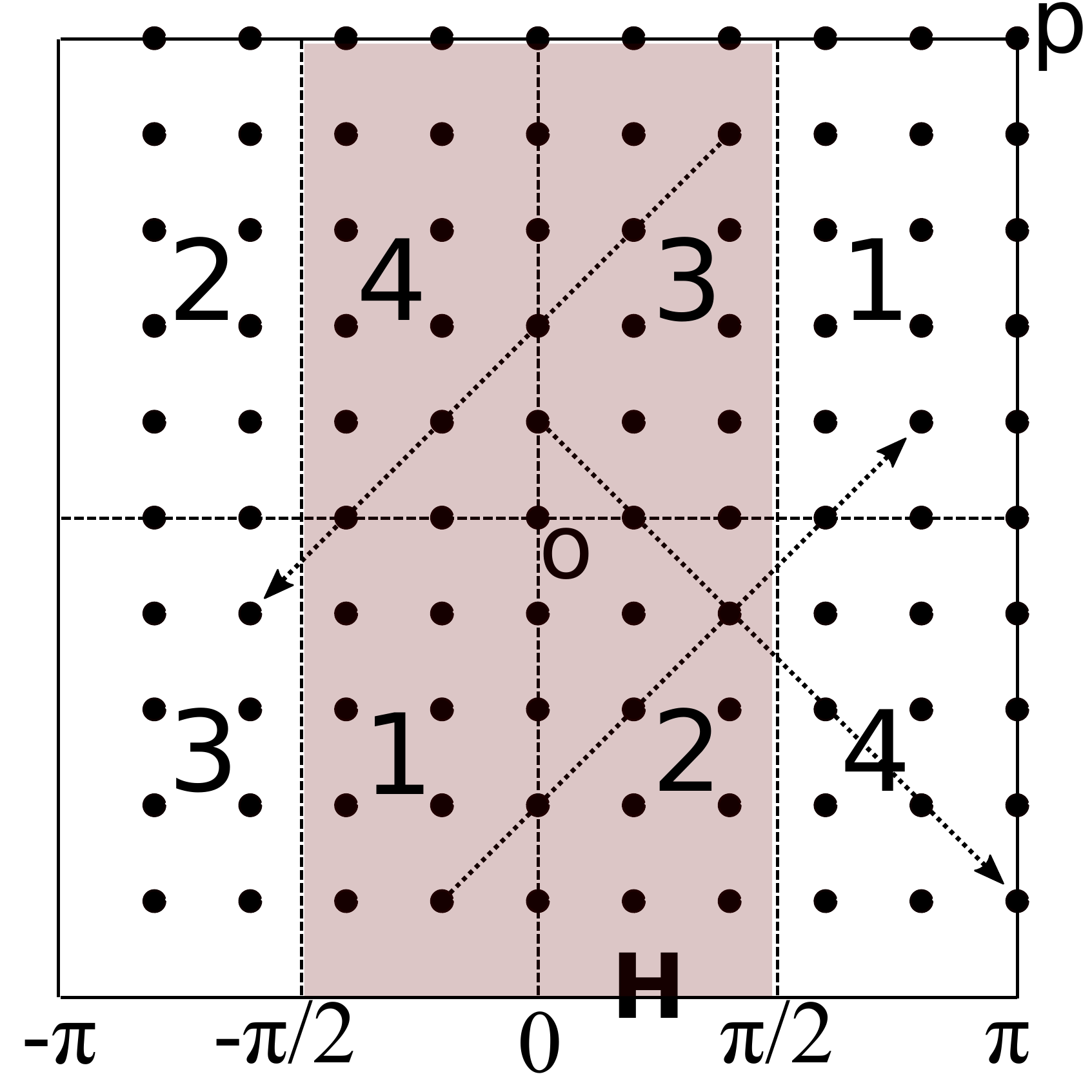}
      \caption{A representation of the dual torus $\T_L^*$ and the $2^{d+1}$ regions $\mathbb{H}^b$, $b \in \mathbb{B}$, which are delimited by the torus boundary or by the dotted lines, where $d = 2$. The bijection $\Psi$ maps the sites where the dotted arrows start to the sites where the dotted arrows end and, for each $i \in \{1, \ldots, 4\}$, it maps the darker region with label  $i$ to the lighter region with the same label.
      }\label{Fig:dualtorussymmetry}
\end{figure}
Thus,   (\ref{eq:symmetrisationproperties}) follows from Lemma \ref{lemma:relationmodus}. 
More precisely, the bijection $\Psi$ is defined as follows. To begin, we split
  $\T_L^*$ into $2^{ d + 1}$ disjoint sub-regions, by first defining the set
of indices $\mathbb{B} : = \{-1, -\frac{1}{2}, \frac{1}{2}, 1\} \times \{  0, 1  \} \times \ldots   \times \{  0, 1  \}   \subset \frac{1}{2} \mathbb{Z}^d$,  and then, for any $b  = (b_1, \ldots, b_d) \in \mathbb{B}$, we define
$$
\mathbb{H}^b : = \Big \{ k \in \T_L^* :  \quad k_1 \in \big (  \pi (b_1  - \frac{1}{2}),  \, \pi b_1 \big  ], \quad   k_i \in  \big  (\pi (b_i - 1), \pi b_i  \big ], \mbox{ for }  i = 2, \ldots, d \Big     \}.
$$
Note that $\mathbb{H}^b \subset \mathbb{H}$ only if $b_1 \in \{- \frac{1}{2},\frac{1}{2}\}$.
For any $x \in \mathbb{H} \setminus \{o\}$, let $b$ the unique element of $ \mathbb{B}$ such that 
$x \in \mathbb{H}^b$. Then, 
$$\Psi(x) : = x + \pi \boldsymbol{u},$$
where $\boldsymbol{u} \in \mathbb{U}$  depends on $b$ 
and it is  defined as follows:
If $b_1 = \pm \frac{1}{2}$, then $u_1 : = \mp 1$.
This guarantees that $\Psi(x) \in \T_L \setminus \mathbb{H}$. Moreover, for any $i \in \{2,\ldots, d\}$, if $b_i = 0$, then $u_i : = 1$, while if 
$b_i = 1$,  then $u_i : = -1$.  This defines the bijection $\Psi$.
Note that it follows from this definition that  $p \not\in \Psi( \mathbb{H} \setminus \{o\})$ as required.
We continue using the properties (\ref{eq:symmetrisationproperties}) 
and we apply Proposition \ref{prop:highfrequencybound},
using the fact that $\cos(k \cdot \boldsymbol{e}_1)$ is non-negative for $k \in \mathbb{H}$,
obtaining
\begin{align*}
&
\sum\limits_{ k \in \T_L^* \setminus \{o, p\} } \cos(  k \cdot \boldsymbol{e}_1 ) \,  \hat{\mathbb{G}}_{L, N, \rho}(k)
= 
\sum\limits_{ k \in \mathbb{H}  \setminus \{o\} }
\Big ( 
 \cos(  k \cdot \boldsymbol{e}_1 ) \,  \hat{\mathbb{G}}_{L, N, \rho}(k)
 +  \cos(  \Psi(k) \cdot \boldsymbol{e}_1 ) \,  \hat{\mathbb{G}}_{L, N, \rho}\big (\Psi(k) \Big ) \\
=  & 2  \sum\limits_{ k \in \mathbb{H}  \setminus \{o\} }
 \cos(  k \cdot \boldsymbol{e}_1 ) \,  \hat{\mathbb{G}}_{L, N, \rho}^o(k)  
 \leq 
   \, \,\frac{1}{2d}   \sum\limits_{ k \in \mathbb{H} \setminus \{o\} }  \,  \frac{ 2 \cos(k \cdot \boldsymbol{e}_1 )}{ 1 - \frac{1}{d} \sum_{i=1}^d \cos( k \cdot \boldsymbol{e}_1) } \, \,- \, \, 2  \sum\limits_{ k \in \mathbb{H} \setminus \{o\} } \cos(  k \cdot \boldsymbol{e}_1 ) \,  \hat{\mathbb{G}}_{L, N, \rho}^e(k).
\end{align*}
Since the previous quantity corresponds to the right-hand side of (\ref{eq:lemmafourier1}), 
Theorem \ref{theo:Infrared} now follows from (\ref{eq:lemmafourier1}) and from the fact that,
\begin{equation}\label{eq:ILDfunction}
\mathcal{I}_L(d) :=
\frac{1}{2d}  \, \frac{1}{ | \T_L|   }   \sum\limits_{ k \in \mathbb{H} \setminus \{o\} }  \,  \frac{ 2 \cos(k \cdot \boldsymbol{e}_1 )}{ 1 - \frac{1}{d} \sum_{i=1}^d \cos( k \cdot \boldsymbol{e}_i )}
\end{equation}
satisfies
\begin{equation}\label{eq:claim1Fourierpart}
  \lim\limits_{L \rightarrow \infty} \mathcal{I}_L(d)  = \frac{r_d}{4d} 
\end{equation}
and that,
\begin{equation}\label{eq:claim2Fourierpart}
 \frac{2}{ | \T_L|   }  \sum\limits_{ k \in \mathbb{H} \setminus \{o\} } \cos(  k \cdot \boldsymbol{e}_1 ) \,  \hat{\mathbb{G}}_{L, N, \rho}^e(k) = 
\frac{2}{|\T_L|} \sum\limits_{x \in \T_L} \mathbb{G}_{L, N, \rho}^e(x)  
\,  -
\, \sum\limits_{x \in \T_L}   \Upsilon_L(x) \,    \, \mathbb{G}_{L, N, \rho}^e(x),
\end{equation}
where 
\begin{align}\label{eq:upsilonfunction}
\forall x \in \mathbb{Z}^d \quad  \quad  \Upsilon_L(x) &  : =
\frac{2}{|\T_L|} \, \sum\limits_{ k \in \mathbb{H}   }     e^{ -i k \cdot ( x - \boldsymbol{e}_1)}  
\end{align}
Thus, to conclude the proof of Theorem \ref{theo:Infrared}, it remains to prove (\ref{eq:claim1Fourierpart}) and (\ref{eq:claim2Fourierpart}).

\paragraph{Proof of (\ref{eq:claim1Fourierpart}).}
To begin, we define the set of vectors, $\mathcal{N} := \{  \pm\frac{ \boldsymbol{e}_1}{2}, \pm \boldsymbol{e}_2, \ldots,  \pm \boldsymbol{e}_d  \}$,
and the function,
$$
 J(k) : = \frac{1}{d} \Big (  \cos( \frac{k_1}{2}) + \sum_{i=2}^d \cos(k_i) \Big ) = 
\frac{1}{2d} \, \sum\limits_{ \boldsymbol{e} \in \mathcal{N} } e^{ i \boldsymbol{e} \cdot k  }.
$$
Below, we first use the fact that the sum is Riemann and after that we perform the change of variable $k_1^\prime =  2  k_1 $ (and call again $k_1$ the new variable),
\begin{align*}
\lim\limits_{ L \rightarrow \infty   } 
\frac{1}{ | \T_L^*|} \sum\limits_{ k \in \mathbb{H} \setminus \{o\} }  \,\frac{ 2 \cos(k_1 )}{ 
1 - \frac{1}{d} \sum_{ i=1  }^d \cos(k_i)} & = \frac{1}{2} \frac{1}{(2\pi)^d} 
\int_{- \frac{\pi}{2}}^{  \frac{\pi}{2} }
dk_1
\int_{- \pi}^{  \pi }dk_2
\ldots 
\int_{- \pi}^{  \pi }
dk_d \, \, 
\frac{ 2 \cos(k_1 )}{ 
1 - \frac{1}{d} \sum_{ i=1  }^d \cos(k_i)}  \\
& = \frac{1}{4} 
 \frac{1}{(2\pi)^d} 
 \int_{- \pi}^{  \pi }
dk_1^\prime
\int_{- \pi}^{  \pi }dk_2
\ldots 
\int_{- \pi}^{  \pi }
dk_d \, \, 
\frac{  2 \cos(\frac{k_1}{2} )}{ 
1 - \frac{1}{d} \cos( \frac{k_1}{2}) - \frac{1}{d} \sum_{ i=2  }^d \cos(k_i)} \\
 & = 
  \frac{1}{2} 
 \frac{1}{(2\pi)^d} 
 \int_{[-\pi, \pi]^d   } dk \frac{   \cos(\frac{k_1}{2})}{ 1 - J(k)}.
\end{align*}
We will now relate the previous quantity to the Green's function of the simple random walk.
For this,
 let $\tilde S_n$ be a  random walk with i.i.d. increments on $\frac{1}{2} \mathbb{Z}^d$
 with jump distribution $\tilde P$ satisfying,
$$
\forall x \in \frac{1}{2} \mathbb{Z}^d \quad \quad \tilde P(\tilde S_1 = x) =
\frac{1}{2d} \mathbbm{1}_{ \{ x \in \mathcal{N} \}},
$$
and denote by $\tilde E$ its expectation.
In other words, the simple random walk $\tilde S_n$ performs half-unit jumps in the $\pm \boldsymbol{e}_1$ directions and unit jumps in all the other directions. 
By independence of the simple random walk increments  we deduce that,
\begin{equation}\label{eq:expectationjumps}
\tilde E \big  ( e^{ i k \cdot  \tilde{S}_n   }  \big ) = \tilde E  \big (    e^{ i k \cdot \tilde{S}_1   }   \big ) ^n =  J(k)^n.
\end{equation}
Using the fact that,
\begin{equation*}\label{eq:integralindicator}
 \frac{1}{(2\pi)^d} \int_{ [-\pi, \pi]^d  } 
\, d \, k \, e^{  i k \cdot x    } \, = \mathbbm{1}_{\{   x = o \}},
\end{equation*}
and using (\ref{eq:expectationjumps}) we deduce that,
\begin{align*}
\tilde {P} \big ( \tilde S_n = - \frac{\boldsymbol{e}_1}{2}    \big )
& 
= \frac{1}{(2\pi)^d} \int_{  [-\pi, \pi]^d  }  d \, k \,
\tilde E \big [ e^{  i k \cdot   (\tilde{S}_n + \frac{\boldsymbol{e}_1}{2}) }     \big ]    = 
 \frac{1}{(2\pi)^d} \int_{  [-\pi, \pi]^d  }  d \, k \, e^{i k \cdot \frac{\boldsymbol{e}_1}{2}  }  J(k)^n 
.
\end{align*}
Recalling that $P$ is the distribution of a simple random walk $S_n$ on $\mathbb{Z}^d$, we deduce 
by an obvious coupling of the random walks $S_n$  and $\tilde S_n$ that,
$$
\forall n \in \mathbb{N} \quad \quad  P \big ( S_n = \boldsymbol{e}_1 \big )  = \tilde P \big ( \tilde S_n =  \pm \frac{\boldsymbol{e}_1}{2} \big ).
$$
From the previous two expressions we deduce that, for any arbitrary finite $m \in \mathbb{N}$,
\begin{align}\label{eq:tojustify}
\sum\limits_{ n = 0  }^m P(S_n = \boldsymbol{e}_1) &  = 
  \frac{1}{(2\pi)^d} \, \,  \int_{  [-\pi, \pi]^d } d \, k \, 
 \frac{\cos(\frac{k_1}{2}) (1 - J(k)^{m+1})}{1 -  J(k)}
\end{align}
Define for any $x \in \mathbb{Z}^d$, $N_x : = \sum_{n=0}^\infty \mathbbm{1}\{ S_n = x  \}$ and recall that $N_+  = \sum_{n>0} \mathbbm{1}\{ S_n = o \}$.
 We have that the following limit exists and satisfies,
\begin{equation}\label{eq:replacein}
\lim\limits_{  m \rightarrow \infty }\sum\limits_{ n = 0  }^m P(S_n = \boldsymbol{e}_1 \big ) 
= E [ N_{\boldsymbol{e}_1}] = E [ N_+]. 
\end{equation}
For the second identity we used  the fact that,  every time the simple random walk jumps from a nearest neighbour of the origin, it has a chance $\frac{1}{2d}$ to hit the origin at the next step. Thus we deduce that
$
\frac{1}{2d} E [ \sum_{ y \sim o} N_y] = E[N_+]
$
and the claim thus follows from rotational symmetry.
To conclude the proof, we need to show that we can exchange the limit $m \rightarrow \infty$ with the integral in the right-hand side of (\ref{eq:tojustify}).
For this, note first that for any   $ 0 < \delta < \pi /2$,
we have that the integrand is positive for any $m \in \mathbb{N}$ and any $k \in [-\delta, \delta]^{d}$
and thus by monotone convergence theorem the limit can be taken inside the integral.
To deal with the integral in $[-\pi, \pi]^d \setminus  [-\delta, \delta]^{d}$, note that  
the integrand is uniformly bounded and converges point-wise  as $m \rightarrow \infty$ in $[-\pi, \pi]^d \setminus  [-\delta, \delta]^{d}$, thus by dominated convergence theorem the limit can be taken inside the integral. This concludes the proof.

\paragraph{Proof of (\ref{eq:claim2Fourierpart}).}
For the first identity we use the fact that the term in the left-hand side is real,
the fact that the function $\hat{   \mathbb{G} }^e(k)$ is real and  the  definition of Fourier transform,
 (\ref{eq:InverseFourier}),
\begin{align*}
& - 2  \sum\limits_{ k \in \mathbb{H} \setminus \{o\} } \cos(  k \cdot \boldsymbol{e}_1 ) \,  \hat{\mathbb{G}}_{L, N, \rho}^e(k)  = 
- 2  \, Re \Big [   \sum\limits_{x \in \T_L} \mathbb{G}_{L, N, \rho}^e(x)  
\sum\limits_{ k \in \mathbb{H} \setminus \{o\}  } e^{ -i k \cdot ( x - \boldsymbol{e}_1)}  \, \Big ] \\
 = &  
 - 2  \, Re \Big [   \sum\limits_{x \in \T_L} \mathbb{G}_{L, N, \rho}^e(x)  
 \big (  \,   - 1 +     \sum\limits_{ k \in \mathbb{H}   }     e^{ -i k \cdot ( x - \boldsymbol{e}_1)} \, \,
  \big )     \, \Big ] \\
  & = 2  \sum\limits_{x \in \T_L} \mathbb{G}_{L, N, \rho}^e(x)  \, \, - \, 2 \, | \T_L | \, \sum\limits_{x \in \T_L} \mathbb{G}_{L, N, \rho}^e(x)  \Upsilon_L(x).
  \end{align*}
An exact and standard computation  shows that the function $\Upsilon_L(x)$, which was defined in (\ref{eq:upsilonfunction}), takes non-zero (negative or positive) values only at even sites along the $\boldsymbol{e}_1$ axis and that it converges point-wise to a function $\Upsilon(x)$ which decays  like $|\Upsilon(x)| \sim \frac{1}{|x_1|}$. 
This concludes the proof of Theorem \ref{theo:Infrared}.
\end{proof}

\section{Proof of Theorems \ref{theo:theo2} 
and \ref{theo:theo3}}
\label{sect:proofs}
In this section we present the proofs of Theorems  \ref{theo:theo2} 
and \ref{theo:theo3}.

\begin{proof}[\textbf{{\textit{Proof of  (\ref{eq:permpositivityaverage}) in Theorem \ref{theo:theo3}.}}}]
To begin, we claim that, for any $L \in 2 \mathbb{N}$,
\begin{equation}\label{eq:precisevalue}
 \mathbb{G}_{L, N, 0}(o,\boldsymbol{e}_1) = \frac{1}{d\, N}.
\end{equation}
To see why this is true, define the map
$\Pi : \Omega_{o, \boldsymbol{e}_1}  \mapsto \{ \pi \in \Omega^\ell \,  : \,   \, \, ( o, \boldsymbol{e}_1 )  \in E_\pi \}$ which associates to any $\pi \in \Omega_{ o, \boldsymbol{e}_1}$ an element
 $ \Pi(\pi)$ which is obtained from $\pi$ by adding to $\pi$ an edge directed from $\boldsymbol{e}_1$ to $o$.
Note that, by definition of $\Omega_{o, \boldsymbol{e}_1}$, such a directed edge cannot be already present in $\pi  \in \Omega_{o, \boldsymbol{e}_1}$ (but an edge directed from $o$ to $\boldsymbol{e}_1$ might be present!), and that this map is one-to-one. Thus, we
 deduce that,
 \begin{equation}\label{eq:relationtwopointprobability}
 \mathbb{Z}_{ L,  N, \rho }(o, \boldsymbol{e}_1) =   \sum\limits_{ \pi   \in  \Omega_{o, \boldsymbol{e}_1}    }
\rho^{ \mathcal{M}(\pi)  } \, \, ( \frac{N}{2}) ^{ \mathcal{L}(\pi)  } =
\frac{2}{N} 
 \sum\limits_{\substack{   \pi   \in  \Omega^\ell : \\  
(o, \boldsymbol{\boldsymbol{e}_1}) \in E_\pi }   }
\rho^{ \mathcal{M}(\pi)  } \, \, ( \frac{N}{2}) ^{ \mathcal{L}(\pi)  } = 
 \frac{2}{ N } \,  \frac{1}{ 2d } \, 
 \sum\limits_{\substack{   \pi   \in  \Omega^\ell : \\  \exists i \in [1,d] \,  :  \, (o, \boldsymbol{e_i}) \in E_\pi }   }
\rho^{ \mathcal{M}(\pi)  } \, \, ( \frac{N}{2}) ^{ \mathcal{L}(\pi)  } 
 \end{equation}
where  $\mathcal{L}(  \Pi(\pi) ) = \mathcal{L}(\pi) + 1$, and the last step follows from reflection and rotational symmetry.
From this and (\ref{eq:twopointlambdarelation}) we deduce that,
$$
 \mathbb{G}_{L, N, \rho} \big (o,\boldsymbol{e}_1 \big ) = \frac{1}{d \, N } \,  \frac{
 \sum\limits_{\substack{   \pi   \in  \Omega^\ell : \\  \exists i \in [1,d] \,  :  \, (o, \boldsymbol{e_i}) \in E_\pi }   }
\rho^{ \mathcal{M}(\pi)  } \, \, ( \frac{N}{2}) ^{ \mathcal{L}(\pi)  } 
}{
 \sum\limits_{\substack{   \pi   \in  \Omega^\ell }   }
\rho^{ \mathcal{M}(\pi)  } \, \, ( \frac{N}{2}) ^{ \mathcal{L}(\pi)  } 
}
$$
Since for any finite $L \in 2 \mathbb{N}$, the second factor equals one when $\rho = 0$ (the origin is not a monomer almost surely),  the proof of (\ref{eq:precisevalue}) is concluded.
From  a direct application of our Infrared-ultraviolet bound, Theorem \ref{theo:Infrared} above,  from the point \textit{(iv)} of Lemma \ref{lemma:Fourierproperties}, and from (\ref{eq:precisevalue}),
 we deduce that 
$$
\frac{1}{|\T_L^o|} \, \sum\limits_{x \in \T^o_L} \mathbb{G}_{L, N, 0}(x) \geq \mathbb{G}_{L, N, 0}( \boldsymbol{e}_1 ) \, - \, \mathcal{I}_L(d) = 
\frac{1}{d N} \, - \, \mathcal{I}_L(d)
$$
Since by Theorem \ref{theo:Infrared} we have that $\lim_{L \rightarrow \infty} \mathcal{I}_L(d) =  \frac{1}{2d} \frac{r_d}{2}$, from the previous expression we obtain (\ref{eq:permpositivityaverage}) and conclude.
\end{proof}

\begin{proof}[\textbf{\textit{Proof of (\ref{eq:permpointwise}) in Theorem \ref{theo:theo3}}.}]
To begin, note that the monotonicity properties in \cite[Theorem 2.4]{Lees}  imply that, for any $L \in 2 \mathbb{N}$,  for any $N \in  \mathbb{N}_{>0}$, 
for any cartesian vector $\boldsymbol{e}_i$, for any $z \in \T_L$ such that $\boldsymbol{e}_i \cdot z \in (2 \mathbb{N} + 1) \cap (0, \frac{L}{2})$, 
for any odd integer $n \in (3,  z \cdot \boldsymbol{e}_i)$,
\begin{equation}
 \mathbb{G}^{o}_{L, N, 0}(o,z) 
\leq 
 \mathbb{G}^{o}_{L, N, 0}(o,n  \boldsymbol{e}_i )
\leq 
 \mathbb{G}^{o}_{L, N, 0}(o,(n-2)  \boldsymbol{e}_i )
 \leq 
 \mathbb{G}^{o}_{L, N, 0}(o,  \boldsymbol{e}_i ) =  \frac{1}{d\, N},
\end{equation}
where the identity follows from (\ref{eq:precisevalue}).
By the torus symmetry and by the fact that for any $z \in \T_L^o$ there exists $\boldsymbol{e}_i$ such that $z \cdot \boldsymbol{e}_i \in 2 \mathbb{Z}+1$, this implies that
\begin{equation}\label{eq:monotonicity}
\forall z  \in \T_L \quad  \mathbb{G}^{o}_{L, N, 0}(o,z)  \leq \frac{1}{d\, N}.
\end{equation}

We now deduce the point-wise lower bound (\ref{eq:permpointwise})
from (\ref{eq:permpositivityaverage}) and (\ref{eq:monotonicity}).
To begin, for any $k  \in \mathbb{N}$, we define the set
$$
 \mathbb{S}_{k,L} := 
 \big \{     
 z \in \T^o_L \, \, : \, \,\exists i \in \{1, \ldots, d\} \, \, \mbox{s.t.}  \, \, |  \, z \cdot \boldsymbol{e}_i  \, |< k
 \big \}.
 $$
Note that, for any $L \in 2 \mathbb{N}$, and $k \in (0, L/2) \cap \mathbb{N}$, 
$$
 | \T^o_L \setminus  \mathbb{S}^o_{k,L} | =  \frac{1}{2} \, (L-2k)^d.
$$
We now choose an arbitrary $\varphi \in \big (0, \frac{1}{2d} (\frac{2}{N}  - \frac{r_d}{2}) \big )$. We claim that 
\begin{equation}\label{eq:claim11}
\exists \, \, c = c(d, \varphi, N)  \in ( 0, \frac{1}{2}) : \quad \forall L \in 2 \mathbb{N} \mbox{ large enough} \quad 
\exists z_L \in 
\T^o_{L} \setminus 
\mathbb{S}_{c \, L, \, L} \, \, \, \mbox{ s.t. } \, \,  \mathbb{G}_{L, N, 0}(z_L)
 \geq \varphi.
\end{equation}
We first conclude the proof using (\ref{eq:claim11}) and then prove (\ref{eq:claim11}).  Choose $c$ as in (\ref{eq:claim11}) and deduce that, for any large enough $L \in 2 \mathbb{N}$, since $z_L \in \T_L^o$,  there exists a cartesian vector $\boldsymbol{e}_i$ such that 
$m_L : = z_L \cdot \boldsymbol{e}_i \in 2 \mathbb{Z}+1$. Moreover, since 
$z_L \in \T_L^o \setminus \mathbb{S}_{c \, L,L}$,  we deduce that  $|m_L| \geq  c \, L$.
Thus, from the monotonicity properties  (\ref{eq:monotonicity}) and symmetry, we deduce that, for any odd integer $n \in (- |m_L|, |m_L|)$ and any cartesian vector $\boldsymbol{e}_i$, 
$$
 \mathbb{G}_{L, N, 0} (o, \boldsymbol{e}_i n) \geq  \mathbb{G}_{L, N, 0} (o, \boldsymbol{e}_i m_L) >  \varphi.
$$
 This concludes the proof of (\ref{eq:dimerpointwise}) given (\ref{eq:claim11}).

Now we prove (\ref{eq:claim11}) by contradiction. Assume that (\ref{eq:claim11}) is false, namely that for any $c \in (0, \frac{1}{2})$ there exists a infinite sequence of even integers  $(L_n)_{n \in \mathbb{N}}$ such that  $\mathbb{G}_{L_n, N, 0}(z) < \varphi$ for any $z \in \T_{L_n}^o \setminus  \mathbb{S}_{c \, L_n, L_n}$. From this, (\ref{eq:dimerpositivityaverage}) and  (\ref{eq:monotonicity}) we deduce that,  for any $c \in (0, \frac{1}{2})$ (define  
 $q  : = (1 - 2 c)^d$), there exists an infinite sequence $(L_n)_{n \in \mathbb{N}}$ such that,
\begin{align*}
\sum\limits_{z \in \T^o_L}    \mathbb{G}^o_{L_n, N, 0   }(z)  & < \, \, \varphi \, \,  \big | \T^o_L \setminus \mathbb{S}_{c \, L_n, L_n} \big | \, \,  + \, \,
\frac{1}{d \, N} \big ( \, |\T^o_{L_n}| - \big | \T^o_L \setminus \mathbb{S}_{c \, L_n, L_n}| \big )
 \\
& =  \frac{1}{2} L_n^d \Big [  \frac{1}{d N} - (1 - 2 c)^d ( \frac{1}{d N} - \varphi)  \Big ] \\ & =  \frac{1}{2} L_n^d \Big [ \,  \frac{1}{d N} \, (1 - q) \, + \, q  \, \varphi \,  \Big ] =  | \T_{L_n}^o|  \Big [ \,  \frac{1}{d N} \, (1 - q) \, + \, q  \, \varphi \,  \Big ]
\end{align*} 
Since we chose $\varphi \in \big  (0, \frac{1}{2 d} (\frac{2}{N}  - \frac{r_d}{2}) \big )$, we see that the previous inequality cannot hold for any constant $c$  and for an infinite sequence  $(L_n)_{n \in \mathbb{N}}$ unless violating (\ref{eq:permpositivityaverage}) (by choosing 
$c$ small enough, namely $q$ close enough to one, we bound the quantity inside the square bracket away from $\frac{1}{2 d} (\frac{2}{N}  - \frac{r_d}{2})$, uniformly in $L_n$), which was proved to hold true.
Thus, we obtained the desired contradiction and conclude the proof.
\end{proof}

\begin{proof}[\textbf{\textit{Proof of Theorem \ref{theo:theo2}}}]
Theorem \ref{theo:theo2} is an immediate consequence of 
Theorem \ref{theo:theo3}. For any $A \subset \T_L$, we have that,
$$
 \mathbb{P}_{L, N, 0}( X \in A ) = \sum\limits_{x \in A}   \mathbb{P}_{L, N, 0}( X =x ) = 
 \frac{\sum\limits_{x \in A} \mathbb{Z}_{L, N, 0}(o,x)}{\sum\limits_{x \in \T_L} \mathbb{Z}_{L, N, 0}(o,x)} =
 \frac{\sum\limits_{x \in A} \mathbb{G}_{L, N, 0}(o,x)}{\sum\limits_{x \in \T_L} \mathbb{G}_{L, N, 0}(o,x)},
$$
where the last identity follows after dividing the numerator and the denominator by $\mathbb{Z}^\ell_{L, N, 0}$.
Now the claim follows from  (\ref{eq:permpositivityaverage}), which provides a lower bound for the denominator in the right-most term, and from (\ref{eq:precisevalue}), which provides an upper bound for the numerator in the right-most term. Using both bounds we obtain  (\ref{eq:permpositivityaverage}).
\end{proof}


\section*{Acknowledgements}
This work started as the author was affiliated at Technische Universit\"at Darmstadt and was funded by DFG (grant number: BE 5267/1), 
it continued as the author was affiliated at the University of Bath
and funded by EPRSC (grant number: EP/N004566/1), and it was concluded as the author was affiliated at the Weierstrass Institute for Applied Analysis and Stochastics, Berlin.
The author thanks Volker Betz,  Ron Peled, Thomas Spencer and Daniel Ueltschi for intriguing discussions and Volker Betz for introducing  the model of random lattice permutations to him.

\section{Appendix}

\begin{proof}[Proof of Lemma \ref{lemma:relationmodus}]
We omit the subscripts for convenience.
To begin, note that it follows from   (\ref{eq:InverseFourier}) that,
\begin{equation}
\label{eq:lemmafourier1}
\mathbb{G}( \boldsymbol{e}_1) =  \frac{1}{ |\T_L|}   \hat{\mathbb{G}}(o)  \, - \,  
\, \frac{1 }{ |\T_L|}   \hat{\mathbb{G}}(p)  \, + \,  
\frac{1}{ |\T_L|}  \sum\limits_{ k \in \T_L^* \setminus \{o, p\}  }  \, e^{ i k \cdot \boldsymbol{e}_1  } \,   \hat{\mathbb{G}}(k)
\end{equation}
and it  follows from  (\ref{eq:Fourierdefinition}) that  
$
\frac{1}{|\T_L|} \hat{\mathbb{G}}(o) = \frac{1}{|\T_L|} \, \sum_{x \in \T_L}   \mathbb{G}(x),
$
and that
$$
   \frac{1}{|\T_L|} \hat{\mathbb{G}}(p) 
    = 
   -  \frac{1}{|\T_L|} \sum\limits_{ x \in \T_L }  \mathbb{G}^{o}(x)  + \frac{1}{|\T_L|}  \sum\limits_{ x \in \T_L }   \mathbb{G}^{e}(x)
$$
Combining the equations above,  we conclude the proof.
\end{proof}

\begin{proof}[Proof of (i), (ii) and (iii) in the proof of Proposition \ref{prop:highfrequencybound}]
These computations are classical and  they can be extracted for example from the computations in  \cite{UeltschiMarseille}. We present them  for the reader's convenience.
The proof of  (i) consists of the following computation,
\begin{align*}
( \triangle v)_x & =
\sum\limits_{y \sim o } \Big ( \,  \cos \big ( (x+y) \cdot k   \big) \, - \, 
v_x \, \Big ) = 
\sum\limits_{y\sim o } \Big ( \,  \cos \big ( x \cdot k   \big) \, 
\cos \big ( y \cdot k   \big) \, \,-  
 \sin \big ( x \cdot k   \big) \, 
 \sin \big (y \cdot k   \big)
- v_x \,  \Big ) \\ 
& = 
\sum\limits_{y\sim o } \Big ( \, v_x 
\cos \big ( y \cdot k   \big) \, 
 -  \, v_x \,  \Big )   = - \varepsilon(k) \, \, v_x.
\end{align*}
The proof of  (ii) follows from the first Green identity, which states that, for any pair of real-valued vectors, $\boldsymbol{a} = (a_x)_{x \in \T_L}, \boldsymbol{b} = (b_x)_{x \in \T_L}$, when $(\T_L, \E_L)$ is the torus,
$$
  \sum\limits_{ \{x,y\}\in \E_L} ( b_y - b_x )\, \, (a_y - a_x) =  
 -  \sum\limits_{x \in \T_L}   {a}_x  \, \, (\triangle b)_x.
   $$  
   The  proof of such an identity can be found for example in \cite{FriedliVelenik}[Lemma 8.7].
   Applying such an identity with $ \boldsymbol{a} = \boldsymbol{b} = \boldsymbol{v}$ and using (i), we obtain (ii).   
It remains to prove (iii).
For this, we use the fact that, by lattice symmetries, $\hat{\mathbb{G}}(k)$ is real and we obtain:
\begin{align*}
& \sum\limits_{x,y \in \T_L} \cos(k \cdot x)  \cos(k \cdot y) \mathbb{G}(x,y)  =\sum\limits_{x\in \T_L}  \Big ( \cos(k \cdot x)  Re \Big [ \sum\limits_{y \in \T_L} \cos(k \cdot y)  \mathbb{G}(y-x)  \Big ] \Big )  \\  
 & =  \sum\limits_{x\in \T_L}  \Big ( \cos(k \cdot x)  Re \Big [  e^{ i k \cdot x }\sum\limits_{y \in \T_L} e^{i k \cdot (y-x)}  \mathbb{G}(y-x)  \Big ] \Big ) = 
\sum\limits_{x\in \T_L}  \big ( \cos(k \cdot x) \,  Re [ e^{ i k \cdot x }   \hat{ \mathbb{G}}_k ] \,  \big ) \\ & = \sum\limits_{x\in \T_L} \,  \cos^2(k \cdot x) \,      \hat  {\mathbb{G}}(k) .
\end{align*}
This concludes the proof.

\end{proof}

\end{document}